\def\thm@space@setup{%
  \thm@preskip=\parskip \thm@postskip=0pt
}
\declaretheorem[parent=section]{lemma}
\declaretheorem[sibling=lemma, name=Proposition]{prop}
\declaretheorem[sibling=lemma]{theorem}
\declaretheorem[sibling=lemma]{claim}
\declaretheorem[sibling=lemma,style=definition]{remark}
\declaretheorem[sibling=lemma,style=definition]{definition}
\declaretheorem[sibling=lemma]{corollary}
\declaretheorem[sibling=lemma]{question}
\newcommand{\bdry}{\partial}
\newcommand{\boundary}{\partial}
\newcommand{\set}[1]{\left\{#1\right\}}
\newcommand{\closure}[1]{\overline{#1}}
\newcommand{\scc}{simple closed curve\xspace}
\newcommand{\sccs}{simple closed curves\xspace}
\newcommand{\mobius}{M\"obius\xspace}
\newcommand{\interior}[1]{\operatorname{int} #1}
\newcommand{\co}{\colon}
\newcommand{\R}{\mathbb{R}}
\renewcommand{\L}{\mathcal{L}}
\newcommand{\N}{\mathbb{N}}
\newcommand{\Rhat}{\widehat{R}}
\newcommand{\Uhat}{\widehat{U}}
\newcommand{\barQ}{\overline{Q}}
\newcommand{\barF}{\overline{F}}
\newcommand{\algint}[2]{#1\cdot#2}
\newcommand{\geomint}[2]{\Delta(#1,#2)}
\newcommand{\ie}{\textit{i.e.}\xspace}
\newcommand{\abs}[1]{\left|#1\right|}
\newcommand{\ceil}[1]{\left\lceil#1\right\rceil}
\newcommand{\Z}{\mathbb{Z}}
\newcommand{\gwrap}{\hbar}
\newcommand{\defn}[1]{\emph{#1}}
\newcommand{\cf}{cf.\xspace}
\newcommand{\Pcal}{\mathcal{P}}
\newcommand{\Am}{A_{-}}
\newcommand{\Ap}{A_{+}}
\newcommand{\Lm}{L_{-}}
\newcommand{\Lp}{L_{+}}
\newcommand{\Tp}{T_+}
\newcommand{\Tm}{T_-}
\newcommand{\Tk}{T_K}
\newcommand{\cut}{\setminus}
\newcommand{\nbhd}{N}
\begin{document}
\title{Boundary-reducing surgeries and bridge number}
\author{Kenneth L.\ Baker \and R.\ Sean Bowman\and John Luecke}

\date{\today}
\maketitle

\begin{abstract}
    Let $M$ be a $3$--dimensional handlebody of genus $g$. This paper gives
    examples of hyperbolic knots in $M$ with arbitrarily large genus $g$ bridge
    number which admit Dehn surgeries which are boundary-reducible manifolds.
\end{abstract}

\section{Introduction}
A $3$--dimensional  handlebody of genus $g$ is a $3$--ball with $g$ one--handles
attached.  In general, when the manifolds before and after Dehn surgery on
a knot are homeomorphic, the surgery is called {\em cosmetic}.  This paper
studies cosmetic surgeries on a handlebody. Let $K$ be a knot in a genus $g$
handlebody $M$.  When does $K$ admit a cosmetic surgery, i.e.\ a Dehn surgery
which is again a genus $g$ handlebody?

When a knot $K$ in any $3$--manifold $M$ is isotopic into the boundary of $M$,
then $K$ admits infinitely many surgeries that are manifolds homeomorphic to
$M$ --- infinitely many cosmetic surgeries. In particular, when $M$ is
a handlebody of genus $g$, one obtains cosmetic surgeries on any knot parallel
into the boundary of $M$. When $g=0$, \cite{GordonLuecke89} shows that this is the only way
a cosmetic surgery arises. When $g=1$, Berge and Gabai (\cite{Berge91,Gabai89, Gabai90}) give
a classification of knots which admit cosmetic surgeries. Not all are isotopic
into the boundary, but, if not, they show that they have bridge number one in
the solid torus. Indeed, showing that the bridge number is one is an important
part of the classification of these knots. Berge and Gabai also give
(\cite{Berge, Gabai90}) examples of knots in handlebodies of genus $g \geq 2$ which 
admit cosmetic surgeries but which are not isotopic into the boundary of the
handlebody. These turn out also to have bridge number one in the the
handlebody. In \cite{Wu93} (see also \cite[Question 4.5]{Gabai90} and \cite{Berge} ), Wu conjectures 
that this is always the case ---  knots in
handlebodies admitting cosmetic surgeries are always either isotopic into the
boundary (i.e.\  bridge number zero) or  bridge number one.  
However, 
Bowman gives 
counterexamples to this conjecture in \cite{Bowman13},  exhibiting knots in a genus
$2$ handlebody that admit cosmetic surgeries but have bridge number greater
than one.  In Theorem~\ref{statement1}(1), we generalize Bowman's result in two
directions: $(a)$ we show that certain infinite subcollections of the knots in
\cite{Bowman13} in fact have arbitrarily large bridge number (in particular,
see Corollary~\ref{seansknots}), and $(b)$ we give such infinite families for
handlebodies of arbitrary genus. For a knot $K$ in a genus $g$ handlebody,
$b_g(K)$ denotes the genus $g$ bridge number of $K$.

A compact $3$--manifold is said to be {\em boundary-reducible} if it contains
a properly embedded disk whose boundary is essential in the boundary of the
manifold, and {\em boundary-irreducible} otherwise.  There has been much interest in the question: when does
a knot in a handlebody admit a Dehn surgery which is a boundary-reducible
manifold? (see \cite{CGLS}, \cite[Question 4.5]{Gabai90}, and \cite{Wu92,Wu93}). 
Note that a handlebody of
genus $g>0$ is boundary-reducible, so the cosmetic surgeries above are examples
of such surgeries. For genus $1$ handlebodies, the knots which admit such
surgeries are classified (\cite{Berge91,Gabai89, Gabai90}, and \cite{Scharlemann90} for the reducible case),
and in particular the only hyperbolic knots which admit a boundary-reducible
surgery are $1$--bridge in the solid torus. In Theorem~\ref{statement1}(2) we
give, for any genus $g>1$, infinite families of hyperbolic knots with arbitrarily
large bridge number that admit Dehn surgeries giving a Seifert fiber space with
$g-1$ one-handles attached (hence  a boundary-reducible manifold).

\begin{definition}\label{torusknot}
A {\em $D(p,q)$--Seifert space}  is a Seifert fiber space over the disk with two exceptional fibers of order $p,q$.
\end{definition}

\begin{theorem}\label{statement1}
  Let $M$ be a handlebody of genus $g>1$. For every positive integer $N$
the following hold.
  \begin{enumerate}
  \item There are infinitely many knots $K\subseteq M$ such that $K$ admits a nontrivial handlebody surgery and
      \[ b_g(K) \geq N. \]
 	Furthermore, the knots may be taken to have the same genus $g$
      bridge number.

  \item There are infinitely many pairs of relatively prime integers $p$ and $q$
      such that for each pair, there are infinitely many knots $K\subseteq M$
      admitting a surgery yielding a $D(p,q)$--Seifert space with $(g-1)$
      one-handles attached.  Furthermore, for each such $K$ 
	\[ b_g(K)\geq N. \]
      Finally, fixing $(p,q)$, the knots may be taken to have the same genus
      $g$ bridge number.
  \end{enumerate}

  The knots in each family above have exteriors in $M$ which are irreducible,
  boundary-irreducible, atoroidal, and anannular.  The slope of each surgery
  is longitudinal, that is, intersecting the meridian once, and there is
  \emph{only one} nontrivial surgery on each knot which is a boundary-reducible manifold.
\end{theorem}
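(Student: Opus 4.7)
The plan is to construct, for each $g>1$ and each $N$, infinite families $\{K_n\}$ in $M$ meeting the conclusions of the theorem by an iterated twisting construction. For part (2), start with a $1$-bridge knot $K_0$ in a solid torus whose Dehn surgery yields a $D(p,q)$--Seifert space (Berge/Dean type examples), attach $(g-1)$ one-handles disjointly from $K_0$ to embed it in a genus-$g$ handlebody $M$, and observe that the same surgery then yields a $D(p,q)$--Seifert space with $(g-1)$ one-handles attached. For part (1), begin with Bowman's genus-$2$ base knots $K_0$ admitting a handlebody surgery and attach $(g-2)$ extra one-handles. In each setup, produce $K_n$ by $n$-fold iteration of a twist along a properly embedded disk or annulus $F\subseteq M\setminus K_0$ whose image under the surgery remains disjoint from an essential disk of the target manifold; this preserves both the (intrinsically longitudinal) surgery slope and the topological type of the surgered manifold. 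Varying $n$ gives infinitely many distinct knots, and fixing appropriate parameters yields families of common bridge number.

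The main obstacle is the lower bound $b_g(K_n)\geq N$. Assume for contradiction that $\Sigma\subseteq M$ is a genus-$g$ bridge surface with $b_g(K_n)=b<N$. Let $D$ be a boundary-reducing disk in the surgered manifold, and after an isotopy arrange $D$ transverse to $\Sigma$ and to the surgery solid torus with $|D\cap\Sigma|$ minimal. Form the usual pair of labeled graphs $G_\Sigma\subseteq\Sigma$ (whose vertices are the $2b$ disks cut off by $\Sigma$ on the bridge arcs, and whose edges are the arcs of $D\cap\Sigma$) and $G_D\subseteq D$ (whose vertices are meridian disks along $\partial D\cap K_n$ and whose edges are the same arcs). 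The iterated twisting forces $|\partial D\cap K_n|$ to grow linearly in $n$, so $G_D$ has many vertices and many edges. A Scharlemann cycle in $G_D$ would yield an embedded disk in the exterior of $K_n$ producing a compression or essential annulus in $M\setminus K_n$, contradicting the anannularity and boundary-irreducibility built into the construction; ruling out such cycles, a parity and valence analysis comparing $G_\Sigma$ (with only $2b$ vertices) to $G_D$ (with many) forces $b$ to grow with $n$, contradicting $b<N$ for $n$ large. This combinatorial step is the technical heart of the argument.

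The remaining conclusions follow readily. The slope is longitudinal because the core of the surgery solid torus becomes isotopic to a spine arc of the resulting handlebody or a regular fiber of the Seifert piece, so it meets the meridian of $K_n$ once. Irreducibility, boundary-irreducibility, atoroidality, and anannularity of the exterior are verified by showing that any essential sphere, disk, torus, or annulus in $M\setminus K_n$ would, after minimizing intersection with the twisting region, reduce to such a surface in the base example, which is excluded by explicit inspection (the base knots are hyperbolic with well-understood exteriors). Finally, uniqueness of the nontrivial boundary-reducing surgery follows by assuming distinct slopes $\alpha_1,\alpha_2$ admit boundary-reducing disks $D_1, D_2$ in their respective surgered manifolds, intersecting them, and running the Gordon--Luecke--style graph-of-intersection argument (\cite{GordonLuecke89,Wu92,Wu93}): absence of Scharlemann cycles together with $\Delta(\alpha_1,\alpha_2)\geq 1$ forces $\alpha_1=\alpha_2$.
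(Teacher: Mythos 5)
Your construction diverges from the paper's at the very first step, and in a way that breaks the theorem's conclusions. If you embed the base knot $K_0$ in $M$ by attaching $(g-1)$ (or $(g-2)$) one-handles disjointly from $K_0$, then the cocore disk of each attached handle is an essential disk in $M$ missing the knot, so the exterior $M_{K_0}$ --- and the exterior of every $K_n$ obtained by twisting away from those disks --- is boundary-reducible. This contradicts the boundary-irreducibility asserted in the theorem (and, via such a disk, also produces essential annuli missing the knot, contradicting anannularity). The paper avoids exactly this by gluing the genus two handlebody containing the knot to $H'$ along a pair of pants whose third boundary curve is $3$--disk-busting and annulus-busting (Lemma~\ref{lem:old3diskbusting}, Proposition~\ref{prop:strongandhyp}); that hypothesis is what kills compressing disks and essential annuli disjoint from the knot. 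A second structural omission: you use a single twisting parameter $n$, under which (by your own argument) the bridge number grows, so you cannot extract infinitely many \emph{distinct} knots with the \emph{same} bridge number. The paper needs two independent twisting parameters --- $n$ along $\Rhat(\alpha)$ inside the little handlebody to force the bridge number up, and $i$ along the boundary-parallel annulus $\Uhat_g$ to produce infinitely many knots distinguished by disk hitting number while keeping a uniform upper bound on bridge number (Proposition~\ref{prop:wrappingno}(3),(4)).

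The combinatorial heart of your argument is also not right as stated. You pair the bridge surface against the boundary-reducing disk $D$ of the surgered manifold and assert that the intersection of $D$ with the surgery solid torus grows linearly in $n$. In the actual construction the boundary-reducing disk is the capped-off once-punctured torus $T\times\{0\}$ and meets the surgery torus in a single longitude, independently of $n$; there is no reason a boundary-reducing disk must meet the dual knot many times, and you give none. The paper's lower bound comes from a different pairing: it applies \cite[Theorem~1.3]{BGL} to the twisting annulus $\Rhat(\alpha)$, which requires showing $\Rhat(\alpha)$ is not isotopic into the standard genus $g$ splitting (Lemma~\ref{R-not-isotopic}), that there is no essential annulus joining $T_+$ to $T_-$ (Lemma~\ref{lem:no-spanning-annulus1}), and exhibiting a catching surface $Q$ with controlled Euler characteristic and boundary slopes (Lemma~\ref{lem:catching}); the bound is then $b_g \geq \frac{1}{2}\left(|n|/(-36\chi(Q)) - 2g\right)$. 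Similarly, the uniqueness of the boundary-reducing surgery is not obtained by a Gordon--Luecke two-graph argument: the paper combines Wu's distance bound with Ni's theorem on surgeries preserving product structures and Bowman's analysis of $\partial$--compressions of the pair of pants. As written, your proposal asserts the two hardest steps rather than proving them, and its starting configuration cannot satisfy the theorem.
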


This is Theorem~\ref{thm:main}, proven in section~\ref{sec:bridgenumberbounds}.

If one does not worry about establishing a lower bound on bridge number, the construction  for Theorem~\ref{statement1} generalizes to give more examples of knots in
a genus $g$ handlebody upon which Dehn surgery gives boundary-reducible manifolds.

\begin{theorem}
Let $M$ be a handlebody of genus $g>1$ and $p,q$ be  non-zero, relatively prime
integers.  There are infinitely many knots $K\subseteq M$ admitting a
longitudinal surgery yielding a $D(p,q)$--Seifert space with $(g-1)$
one-handles attached.  Furthermore,  the exterior in $M$ of each knot is  irreducible,
boundary-irreducible, atoroidal, and anannular. Finally, for fixed $p,q$,
these knots can be taken  to have the same genus $g$ bridge number in $M$.
\end{theorem}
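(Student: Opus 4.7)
The plan is to reprise the construction behind Theorem~\ref{statement1}(2), relaxing the constraints on $(p,q)$ that were needed only to force the bridge number lower bound. Fix any non-zero coprime $(p,q)$ and select a Berge-Gabai knot $J\subseteq V$ in a solid torus whose longitudinal surgery yields a $D(p,q)$--Seifert space $Y$; such a $J$ exists for every such $(p,q)$ by the Berge-Gabai classification of solid-torus knots with Seifert fibered surgeries. I would then embed $V$ into $M$ so that $M$ is $V$ with $g-1$ one-handles attached along disks in $\bdry V$, giving a decomposition $M = V\cup H$ with $H$ a genus $g-1$ handlebody meeting $V$ in $g-1$ disjoint disks on $\bdry M$. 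Setting $K_0 := J\subseteq V\subseteq M$, the longitudinal surgery yields $Y\cup H$, i.e.\ $Y$ with $g-1$ one-handles attached, as required.

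For the infinite family, I would modify $K_0$ by twisting operations supported in $H$ but preserving the surgery outcome. The cleanest route is via the dual picture: in the surgery manifold $W = Y\cup H$, the surgery dual $K^*_0$ to $K_0$ can be modified by isotopies that push $K^*_0$ through the attached one-handles, producing new knots $K^*_n\subseteq W$ each of which admits a surgery recovering $M$. The cores of the corresponding surgery tori form an infinite family $K_n\subseteq M$, all sharing the same surgery manifold $W$. By choosing the isotopies to fix a suitable genus $g$ Heegaard splitting of $W$ (and hence of $M$), one can arrange that all $K_n$ have the same genus $g$ bridge number. The irreducibility, boundary-irreducibility, atoroidality, and anannularity of $M\cut \nbhd(K_n)$ follow from the corresponding properties of $V\cut \nbhd(J)$ (valid for Berge-Gabai knot exteriors) together with a standard innermost-disk / outermost-arc analysis along the frontier surface where $V\cut \nbhd(J)$ meets $H$: any essential sphere, disk, annulus, or torus in $M\cut \nbhd(K_n)$ meeting this frontier minimally can be split into pieces, each of which lies in $V\cut\nbhd(J)$ (ruled out) or in a handlebody piece from $H$ (also ruled out since handlebodies contain no essential tori or annuli, and the remaining disks can be eliminated by the minimality assumption).

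The main obstacle will be showing that the $K_n$ are pairwise non-isotopic in $M$. I would attempt this by exhibiting an invariant sensitive to the twisting parameter $n$, for instance the winding numbers of $K_n$ around the co-cores of the $g-1$ one-handles (i.e.\ the image of $[K_n]$ in $H_1(M)\cong\Z^g$) or the geometric intersection numbers of $K_n$ with a system of meridian disks of $M$. Tracking how these invariants transform under the modification $K^*_{n-1}\mapsto K^*_n$ should show they grow without bound, forcing the $K_n$ to pairwise distinct isotopy classes in $M$.
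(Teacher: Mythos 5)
There are two fatal problems with your construction, each of which breaks a different clause of the theorem.

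First, the way you build $K_0$ guarantees that its exterior is boundary-\emph{reducible}. If $M$ is obtained from the solid torus $V$ by attaching $g-1$ one-handles along disks in $\bdry V$, then the cocore of any attached one-handle is a non-separating, properly embedded disk in $M$ that is disjoint from $V$, hence from $K_0$; it therefore survives as an essential compressing disk for $\bdry M$ in $M_{K_0}$. So no knot contained in $V$ can have boundary-irreducible (let alone atoroidal and anannular) exterior in $M$. This is precisely the difficulty the paper's construction is engineered to avoid: there the little handlebody $H=T\times I$ is glued to $H'$ not along disks but along a pair of pants $P'$ whose third boundary component $c$ is $3$--disk-busting and annulus-busting in $H'$ (strong Seifert type, Lemma~\ref{lem:old3diskbusting} and Proposition~\ref{prop:strongandhyp}), which is what forces every essential disk and annulus of $M$ to meet the knot (Proposition~\ref{prop:exterior-hyperbolic}). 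Relatedly, your appeal to ``the Berge--Gabai classification of solid-torus knots with Seifert fibered surgeries'' is a misattribution --- Berge and Gabai classified knots in solid tori with boundary-reducible (solid torus) surgeries, and for $|p|,|q|>1$ a $D(p,q)$--Seifert space is boundary-irreducible --- and in any case the paper does not start from a knot in a solid torus: it starts from the curve $\tau(p,q)$ on the middle fiber of $T\times I$, whose surface-framed surgery splits $H$ into two solid tori in which $a$ and $b$ become $(p,q)$ and $(q,p)$ curves (Proposition~\ref{k-has-handlebody-surgery}).

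Second, your mechanism for producing infinitely many knots produces only one. If $K^*_n$ is obtained from $K^*_0$ by an ambient isotopy of $W$, that isotopy gives a homeomorphism of pairs $(W,K^*_0)\to(W,K^*_n)$ carrying the surgery slope to the surgery slope, and hence a homeomorphism of the surgered manifolds carrying the core $K_0$ to the core $K_n$. Under the paper's convention (and any reasonable one) the $K_n$ are all the same knot in $M$, so no invariant --- winding number, homology class, or intersection with meridian disks --- can separate them. An honest version of this step is exactly where the real work lies: the paper varies the gluing by Dehn twisting the pair of pants along the curve $\gamma_g$, realizes this as twisting $K(\tau(p,q),g,S,0)$ along the essential annulus $\Uhat_g$ (which is \emph{not} a self-homeomorphism of $M$ because $-1/i$--surgery on $J_g$ must be undone to re-identify the ambient manifold with $M$), and then proves the resulting knots are pairwise distinct by showing their disk hitting numbers tend to infinity (Proposition~\ref{prop:wrappingno}(4)), using the catching-surface combinatorics of Theorem~\ref{thm:essentialsurfaces} together with Lemma~\ref{lem:no-spanning-annulus2}. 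The ``same bridge number'' clause is then obtained by pigeonhole from the uniform upper bound of Proposition~\ref{prop:wrappingno}(3), not by fixing a Heegaard splitting.
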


This is Theorem~\ref{thm:main2}, proven in section~\ref{sec:bridgenumberbounds}.

Note that in Theorem~\ref{statement1}(1), we show that the cosmetic surgery on
these knots with large bridge number is unique. There are examples of knots in
handlebodies that admit more than one cosmetic surgery and whose exteriors are
irreducible, boundary-irreducible, atoroidal, and anannular, see (Berge \cite{Berge}, Frigerio et.\ al.\ \cite{frigerio}). 
Note that by \cite{Wu92}, any such knot will admit at most two
cosmetic surgeries.  Knots admitting more than one cosmetic surgery should be much more 
special, and in fact the
only examples known to date have bridge number one. So we ask the following:

\begin{question}
Let $K$ be a knot in a handlebody whose exterior
is  irreducible, boundary-irreducible, atoroidal, and anannular. If $K$
admits more than one cosmetic surgery, must it have bridge number one in the
handlebody?
\end{question}

The {\em surgery dual knot} to a Dehn surgery is the knot in the resulting manifold that is the core of the attached solid
torus. Theorem~\ref{statement1}(1) gives families of knots in a handlebody with large bridge number and that admit cosmetic surgeries.
The surgery dual knots to these knots are also knots in handlebodies admitting cosmetic surgeries, 
and we can ask about their bridge
number. Wu shows in Theorem 5 of \cite{Wu93} that a knot in a handlebody with a cosmetic surgery has 
bridge number at most one if and only if its surgery dual does too. Thus the surgery duals in the families given
above will have bridge number greater than one.

\begin{question}
Is there a family of knots in a handlebody  with the following properties?
\begin{enumerate}
\item Each knot admits a cosmetic surgery.
\item The bridge numbers for these knots and for their surgery duals grow without bound.
\item The exterior of each knot in the family is irreducible, boundary-irreducible, atoroidal, and anannular.
\end{enumerate}  
\end{question}

The paper is organized as follows. In section~\ref{section:little-handlebody}
we define a collection of knots in a genus $2$ handlebody, $H$. In
section~\ref{section:big-handlebody}, we glue this genus $2$ handlebody to
a genus $g$ handlebody, $H'$, along a $3$--punctured sphere that has nice
properties in both $H$ and $H'$. This produces a genus $g$ handlebody $M$, and
the knots defined for $H$ become the knots in $M$. In
section~\ref{sec:essentialsurfaces}, we show how twisting a knot along an
annulus can increase the hitting number of the knot with surfaces of fixed
Euler chararcteristic in $M$. In section~\ref{sec:wrapping}, we use twisting
along an annulus to produce infinitely many hyperbolic knots in $M$ which admit
handlebody and boundary-reducible surgeries. In
section~\ref{sec:bridgenumberbounds}, we use twisting along an annulus to find
infinite families of knots among those from  section~\ref{sec:wrapping} that
also have large bridge number. In section~\ref{section:genus2}, we clarify the
connection between the knots constructed in this paper and the knots
constructed in \cite{Bowman13}.

\section{The little handlebody}\label{section:little-handlebody}

In this section we construct a family of knots in the genus 2 handlebody
$P\times I$, where $P$ is a pair of pants (that is, a $2$--sphere
minus three disjoint open disks) and $I=[-1,1]$.  The surface
$P\times\set{1}$ is $\boundary$--incompressible in the complement of the knots,
but becomes $\boundary$--compressible after some nontrivial surgery.  We call
this handlebody the \defn{little handlebody} in contrast with the big
handlebody of Section~\ref{section:big-handlebody}.  The big handlebody is
constructed by gluing the little handlebody to another handlebody along
$P\times \{-1\}$.

We consider two knots in a $3$--manifold $M$ to be the same if there is
a homeomorphism of $M$ to itself taking one knot to the other.

\medskip

\begin{definition}\label{def:essentialannulus}
    Let $M$ be a $3$--manifold with boundary. A properly embedded disk in $M$ is
    said to be essential if its boundary does not bound a disk in $\partial M$.
    When $M$ is a handlebody, such a disk is called a \defn{meridian disk} of
    $M$.  A properly embedded annulus is said to be essential if it is
    incompressible and not parallel into $\partial M$.
\end{definition}

\medskip

\begin{definition}
    When $L$ is a $1$--manifold properly embedded in a $3$--manifold, $M$, we
    write $M_L$ to denote the exterior of $L$ in $M$, $\closure{M\setminus
    N(L)}$.
\end{definition}

\medskip

Fix a once-punctured torus $T$.  Let $\mu$ and $\lambda$ be two oriented \sccs
intersecting transversally in one point, so that the homology classes of
$\mu,\lambda$ form a right-handed  basis for $H_1(T)$.  If $\alpha$ and $\beta$
are oriented \sccs in $T$, let $\alpha+\beta$ denote the \scc homologous to
$[\alpha]+[\beta]$.  Define $\nu$ to be the \scc $\lambda+\mu$.

\begin{definition}
    Let $\tau$ be an unoriented,  non-trivial simple closed curve in $T$.  Then
    $\tau'$ is defined to be a properly embedded arc in $T$ whose complement is
    an annulus with core $\tau$.  We also write $\tau(p,q)$ when $\tau$ is
    homologous in $T$ to $\pm(p [\mu] + q [\lambda])$.
\end{definition}

Note that by proper isotopies, the arcs $\lambda'$, $\mu'$, and $\nu'$ may be
made mutually disjoint in $T$.  Also we may write $\mu = \tau(1,0)$, $\lambda
= \tau(0,1)$, and $\nu = \tau(1,1)$.  We will also consider the curves
$\lambda-\mu = \tau(-1,1)$, $\lambda+\nu = \tau(1,2)$, and $\mu+\nu
= \tau(2,1)$.

\begin{definition}\label{def:Rhat(tau)}
    Consider the genus two handlebody $H\cong T\times I$, with $I=[-1, 1]$.
    Let $\tau$ be an essential \scc in $T$.  Define the meridian disk of $H$,
    $D_{\tau} = \tau'\times I$.  Define annuli $\Am(\tau) = \tau\times [-1,
    -1/2]$, $\Rhat(\tau) = \tau\times [-1/2, 1/2]$, and $\Ap(\tau) = \tau\times
    [1/2, 1]$.  Define $K(\tau)$ to  be the knot $\tau  \times\set{0}$ in $H$.
    We refer to the components of $\boundary \Rhat(\tau)$ as $L_{\pm}(\tau)$ so
    that $\Lp(\tau)=\tau\times\set{1/2}$ and $\Lm(\tau)
    = \tau\times\set{-1/2}$.  Note that $D_{\tau}$ is disjoint from
    $L_{\pm}(\tau)$.
\end{definition}

\medskip

We will be looking at the knots above as obtained by twisting along an annulus.

\begin{definition}\label{def:twistingalongR}
    Let $K$ be a knot and $R$ be an annulus in the interior of $H$. Assume $K$
    intersects $R$ transversely. Let  $R \times [0,1]$ be a product
    neighborhood of $R$ in $H$. Let $h_n:R \times [0,1] \to R \times [0,1]$ be
    a homeomorphism gotten by $i$ complete twists along $R$, where $h_n$ is the
    identity on $R \times \{0,1\}$. Define the knot {\em K twisted $n$ times
    along $R$} as $[K-(R \times [0,1])] \cup h_n(K \cap (R \times [0,1]))$. See
    \cite{BGL} for a more detailed description.
\end{definition}

\begin{definition}\label{defn:twisting}
    Let $\tau,\kappa, \alpha$ be essential simple closed curves in $T$ with
    $\kappa$ and $\alpha$ not isotopic in $T$. If $\tau$ is the result of $n$
    positive Dehn twists of $\kappa$ along $\alpha$ we indicate this by writing
    $\tau(\kappa,\alpha,n)$ for $\tau$.   Let $\L(\kappa,\alpha)$ be the link
    $K(\kappa) \cup \Lp(\alpha) \cup \Lm(\alpha)$ in $H$. In the notation
    above,  $K(\tau(\kappa,\alpha,n))$ is the knot $K(\kappa)$ twisted $n$
    times along the annulus $\Rhat(\alpha)$, where the sign of the twisting is
    chosen to agree with a positive Dehn twist.
\end{definition}

In twisting $K(\kappa)$ along $\Rhat(\alpha)$, we will always assume that
$\kappa$ and $\alpha$ are not isotopic in $T$ and are taken to intersect
minimally in $T$.  Let $\L=\L(\kappa,\alpha)$,  and let $\Tp$, $\Tm$, and $\Tk$
be the boundary components of $H_{\L}$ corresponding to $\Lp(\alpha)$,
$\Lm(\alpha)$, and $K(\kappa)$, respectively.  By abuse of notation, we will
also refer to the part of $\Ap(\alpha)$ and $\Am(\alpha)$ lying in $H_{\L}$ as
$\Ap(\alpha)$ and $\Am(\alpha)$, respectively.  These are also annuli, and they
are essential since $\Lp$ and $\Lm$ are nontrivial in $H$.  Let $R$ be the
planar surface $\Rhat(\alpha) \cap H_{\L}$.  This surface has one longitudinal
boundary component on each of $\Tp$ and $\Tm$ and one or more coherently
oriented meridional boundary components on $\Tk$.  It is also incompressible in
$H_{\L}$ since compression would show that either $\Lp$ or $\Lm$ is trivial or
there is a non-separating sphere in $H$.

The knots obtained by twisting $K(\kappa)$ along $\Rhat(\alpha)$ are of the
type studied in~\cite{Bowman13}.
Lemmas~\ref{lem:primitivecore}, \ref{L-meets-boundary-compressing-disks}, 
\ref{lem:onearc}, \ref{lem:min-D-cap-P}, 
\ref{no-annuli-on-P}, \ref{X-irreducible}, and~\ref{lem:old3diskbusting} 
essentially appear there (in whole or in part) and are reproduced here for completeness.

\begin{lemma}\label{lem:coreofH}
    Let $\tau$ be an essential \scc in $T$. Then $K(\tau)$ is a core curve of
    the genus $2$ handlebody $H \cong T \times I$.
\end{lemma}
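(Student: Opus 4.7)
The plan is to exhibit an explicit meridian disk of $H$ whose complement is a solid torus containing $K(\tau)$ as a core, thereby confirming that $K(\tau)$ belongs to a disk system whose dual graph is a spine of $H$. The natural candidate is the disk $D_{\tau} = \tau'\times I$ already introduced in Definition~\ref{def:Rhat(tau)}, where $\tau'$ is a properly embedded arc in $T$ chosen so that $T \cut \tau'$ is an annulus with core isotopic to $\tau$.

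First I would verify that $D_{\tau}$ is a meridian disk of $H$: since $\tau'$ is an essential properly embedded arc in $T$, the product $\tau'\times I$ is a properly embedded disk in $H = T\times I$, and its boundary is essential in $\partial H$ because $\tau'$ is essential in $T$ and $H$ carries the product structure.

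Next I would cut $H$ along $D_{\tau}$. Because $T \cut \tau'$ is an annulus $A$ whose core is isotopic to $\tau$, we get a product identification $H \cut D_{\tau} = A \times I \cong (S^1 \times [0,1]) \times I \cong S^1 \times D^2$, where the $S^1$-factor corresponds to the core $\tau$ of $A$. Under this identification, $K(\tau) = \tau \times \set{0}$ sits in $A\times I$ as the set $S^1 \times \set{\text{pt}} \times \set{0}$, which is a section of the $S^1$-factor and hence properly isotopic in the solid torus to its core $S^1 \times \set{(0,0)}$. Therefore $K(\tau)$ is a core of the solid torus obtained by cutting $H$ along $D_{\tau}$, making $K(\tau)$ a core curve of $H$.

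There is no real obstacle here; the statement is an immediate consequence of the product structure $H = T\times I$ together with the choice of $\tau'$ built into the definition of $D_{\tau}$. The only thing worth being careful about is checking that $T \cut \tau'$ really is an annulus with core $\tau$ (which is exactly how $\tau'$ was defined) and that a longitudinal section $S^1 \times \set{\text{pt}}$ of a solid torus is isotopic to its core, which is a standard fact.
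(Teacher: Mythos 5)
Your proof is correct and follows essentially the same route as the paper: cut $H$ along the disk $D_\tau = \tau'\times I$, identify the result with $(T\cut\tau')\times I$, a solid torus having $K(\tau)$ as a core, and then observe that reattaching along $D_\tau$ (a $1$--handle) recovers $H$, so $K(\tau)$ is a core curve. The only difference is that you spell out the isotopy of the section $\tau\times\set{0}$ to the core of the solid torus, which the paper leaves implicit.
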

\begin{proof}
    Since $T \cut \tau'$ is an annulus with core curve $\tau$, we have that $H
    \cut D_\tau \cong (T \cut \tau') \times I$ is a solid torus with $K(\tau)$
    as its core.  Because $H$ is recovered by attaching a $1$--handle to this
    solid torus (reversing the chopping along $D_\tau$) we have our result.
\end{proof}

\begin{lemma}\label{lem:primitivecore}
    Let $W$ be the compression body that is the exterior of a core curve of
    a genus $2$ handlebody.   Up to isotopy, there is a unique non-separating
    compressing disk for $W$.  In particular, up to isotopy, $D_{\tau}$ is the
    unique non-separating meridian disk  for $H$ which does not meet $K(\tau)$.
\end{lemma}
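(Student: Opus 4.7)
The plan is to build $W$ explicitly as a compression body, exhibit one non-separating compressing disk, and then prove any other non-separating compressing disk is isotopic to it by a standard outermost-arc surgery argument.

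First I would note that since $W$ is the exterior of a core curve of a genus $2$ handlebody, it is a compression body with $\partial_- W$ a torus and $\partial_+ W$ the genus $2$ surface; equivalently, $W \cong (T^2 \times I) \cup h$, where $h$ is a single $1$-handle attached to $T^2 \times \{1\}$. The co-core $D_0$ of $h$ is a properly embedded disk with essential boundary on $\partial_+ W$, and cutting $W$ along $D_0$ yields $T^2 \times I$, which is connected, so $D_0$ is non-separating. This establishes existence.

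For uniqueness, let $D$ be any non-separating compressing disk. Isotope $D$ to minimize $|D \cap D_0|$. Circles of intersection can be removed by an innermost-disk argument in $D_0$ using irreducibility of $W$ (and the fact that $\partial_+ W$ is incompressible away from meridian disks), so we may assume $D \cap D_0$ consists of arcs. If there are no arcs, then $D$ lies in $W \setminus N(D_0) \cong T^2 \times I$ with $\partial D$ on the genus $2$ boundary, which inside $T^2 \times \{1\}$ is obtained by removing two scar disks (the feet of $h$) and attaching the tube annulus. Since $T^2 \times I$ has incompressible boundary, $\partial D$ bounds a disk $\Delta \subset T^2 \times \{1\}$; essentiality of $\partial D$ in $\partial_+ W$ forces $\Delta$ to contain at least one scar, and containing both would make $\partial D$ separating in $\partial_+ W$, contradiction. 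So $\Delta$ contains exactly one scar, $\partial D$ is parallel to $\partial D_0$ in $\partial_+ W$, and $D$ is isotopic to $D_0$.

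If $D \cap D_0$ contains at least one arc, pick an outermost arc $\alpha$ on $D_0$ cobounding a subdisk $E \subset D_0$ with an arc $\gamma \subset \partial_+ W$. Cutting $D$ along $\alpha$ produces two subdisks $D_1, D_2$, and the two surgered disks $D_1 \cup E$ and $D_2 \cup E$ (after small push-off) are properly embedded with strictly fewer intersections with $D_0$. Working in $H_2(W, \partial_+ W; \mathbb{Z}/2)$, one has $[D] = [D_1 \cup E] + [D_2 \cup E]$, so $[D] \neq 0$ forces at least one summand to be non-zero; that summand is an essential non-separating compressing disk with smaller intersection with $D_0$, contradicting minimality. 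Hence $D \cap D_0 = \emptyset$, and by the previous paragraph $D$ is isotopic to $D_0$.

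Finally, to conclude the statement about $D_\tau$: Lemma~\ref{lem:coreofH} identifies $K(\tau)$ as a core of $H$, and $D_\tau = \tau' \times I$ is disjoint from $K(\tau) = \tau \times \{0\}$; cutting $H$ along $D_\tau$ yields a solid torus (with $K(\tau)$ as its core), whose exterior in $W$ is $T^2 \times I$ and hence connected, so $D_\tau$ is non-separating. By the uniqueness established above, $D_\tau$ is the non-separating compressing disk of $W$, equivalently the unique non-separating meridian disk of $H$ missing $K(\tau)$. The main technical point to watch is the homological bookkeeping in the outermost-arc surgery that guarantees one of the new disks inherits the non-separating property.
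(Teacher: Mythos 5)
Your construction of $W$ as $\widehat{T}\times I$ with a $1$--handle, the existence of the cocore $D_0$, and the case $D\cap D_0=\emptyset$ all match the paper's proof and are fine. The gap is in the outermost-arc step. As you have set it up, you isotope $D$ to minimize $|D\cap D_0|$ \emph{within the isotopy class of $D$}; but the surgered disk $D_i\cup E$ is in general \emph{not} isotopic to $D$, so the fact that it meets $D_0$ fewer times does not contradict your minimality. This is not merely a bookkeeping slip: even if you repair the setup by taking $D$ to minimize $|D\cap D_0|$ over \emph{all} non-separating compressing disks not isotopic to $D_0$, your homological argument only tells you that one of $D_1\cup E$, $D_2\cup E$ is non-separating. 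If that disk happens to be isotopic to $D_0$, it does not lie in the class over which you minimized, and again no contradiction results. Your $\Z/2$--homology count cannot distinguish ``non-separating and not isotopic to $D_0$'' from ``isotopic to $D_0$,'' and only the former yields the contradiction you claim.

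That residual case is precisely where the paper's proof does its real work: there, one surgered disk $E_1$ is isotopic to the cocore and the other, $E_2$, is separating; if $E_2$ is $\partial$--parallel then $E$ was already isotopic to the cocore, and otherwise $\partial E_2$ splits $\partial_+W$ into two punctured tori, a banding argument shows $\partial E$ is isotopic to $\partial E_1$ and hence to $\partial D_0$, and irreducibility of $W$ then forces $E$ to be isotopic to $D_0$ --- the desired contradiction. To complete your proof you need to add this analysis (or an equivalent argument) for the case in which the non-separating surgered disk is isotopic to $D_0$. Your final paragraph deducing the statement about $D_\tau$ from Lemma~\ref{lem:coreofH} is correct and agrees with the paper.
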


\begin{proof}
    Let $\widehat{T}$ be the (closed) torus.  We may view $W$ as a thickened
    torus $\widehat{T} \times I$ with a $1$--handle attached to one side. Let
    $D$ be the cocore of this $1$--handle.    Among all non-separating
    compressing disks for $W$ that are not isotopic to $D$,  let $E$ be one
    that intersects $D$ minimally.

    If $E$ is disjoint from $D$, then we may isotop $E$ into $\widehat{T}
    \times I$ disjoint from the feet of the $1$--handle.  Since $\widehat{T}
    \times I$ is irreducible and $\bdry$--irreducible, the disk $E$ is
    $\bdry$--parallel and hence separating in $\widehat{T} \times I$.  In order
    for $E$ to be non-separating in $W$, the disk $\bdry E$ bounds in $\bdry
    (\widehat{T}\times I)$ must contain exactly one of the feet of the
    $1$--handle.   But then $E$ is isotopic to $D$ in $W$.

    Thus $E$ is not disjoint from $D$.  Let $c$ be an outermost arc of
    intersection in $D$ and let $D'$ be the subdisk it cuts off.  Surger $E$
    along $D'$ to produce disjoint disks $E_1$ and $E_2$ that intersect $D$
    fewer times than $E$.  Each must be either separating or isotopic to $D$.
    As $E_1 \cup E_2$ must also be non-separating, we may assume $E_1$ is
    isotopic to $D$ and $E_2$ is separating. If $E_2$ were $\bdry$--parallel
    then $E$ is isotopic to $D$, a contradiction.  Hence $\partial E_2$ must
    separate $\partial W$ into two punctured tori, one of which contains
    $\partial E_1$. But then banding $\partial E_2$ to $\partial E_1$ to give
    $\partial E$ shows that $\partial E$ is isotopic to $\partial E_1$ which in
    turn is isotopic to $\partial D$. Since $W$ is irreducible, $E$ must be
    isotopic to $D$, a contradiction.

    Lemma~\ref{lem:coreofH} then gives the final statement.
\end{proof}

Choose a subarc $\nu_0$ of $\boundary D_{\nu}$ such that $\nu_0\cap
(\mu\times\set{-1} \cup \lambda\times\set{1}) = \boundary\nu_0$
(there are two such choices).

Define the pair of pants $P$ to be a regular neighborhood of
$\mu\times\set{-1}$, $\lambda\times\set{1}$, and $\nu_0$.  See
Figure~\ref{fig:xyzV2}.  Then $H\cong P\times I$: cutting along the disjoint
disks $D_\mu, D_\lambda$ gives the product of a disk and  an interval --- which
reglues to give $P \times I$.  In particular $P$ is incompressible in $H$.  Let
$\boundary_- P$ be the boundary component of $P$ homotopic to
$\mu\times\set{-1}$ in $\boundary H$, $\boundary_+P$ the component homotopic to
$\lambda\times\set{1}$, and $\boundary_0 P$ the third component.  See
Figures~\ref{fig:xyzV2}(right) and \ref{fig:HandP2}(left).

\begin{figure}[h!tb]
  \begin{center}
\includegraphics[width=0.95\textwidth]{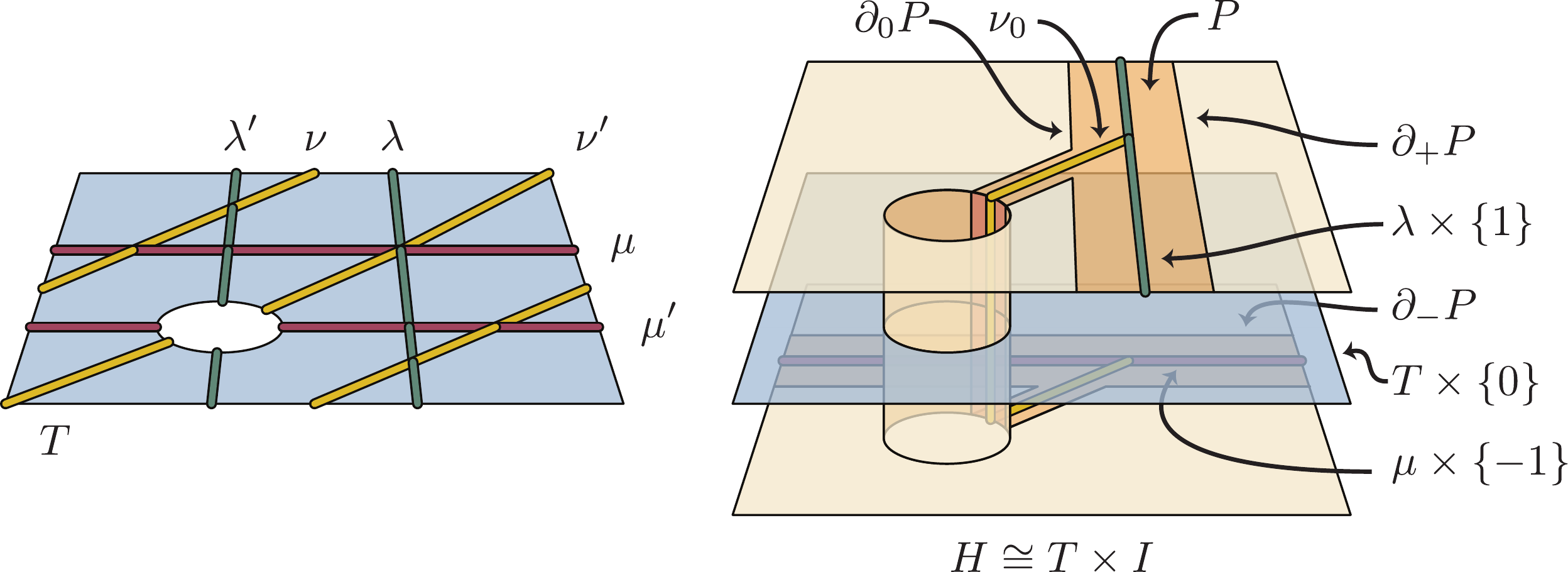}
  \end{center}

  \caption{(Left) The punctured torus $T$ with arcs $\mu'$, $\lambda'$, $\nu'$
  and closed curves $\mu$, $\lambda$, $\nu$. (Right) The handlebody $H\cong
  T\times I$ with the curves $\mu'\times\set{-1}$ and $\lambda'\times\set{1}$
  and arc $\nu_0$ in $\bdry H$.  Their regular neighborhood $P$ is also shown.}

  \label{fig:xyzV2}
\end{figure}

\begin{figure}[h!tb]
  \begin{center}
\includegraphics[width=0.95\textwidth]{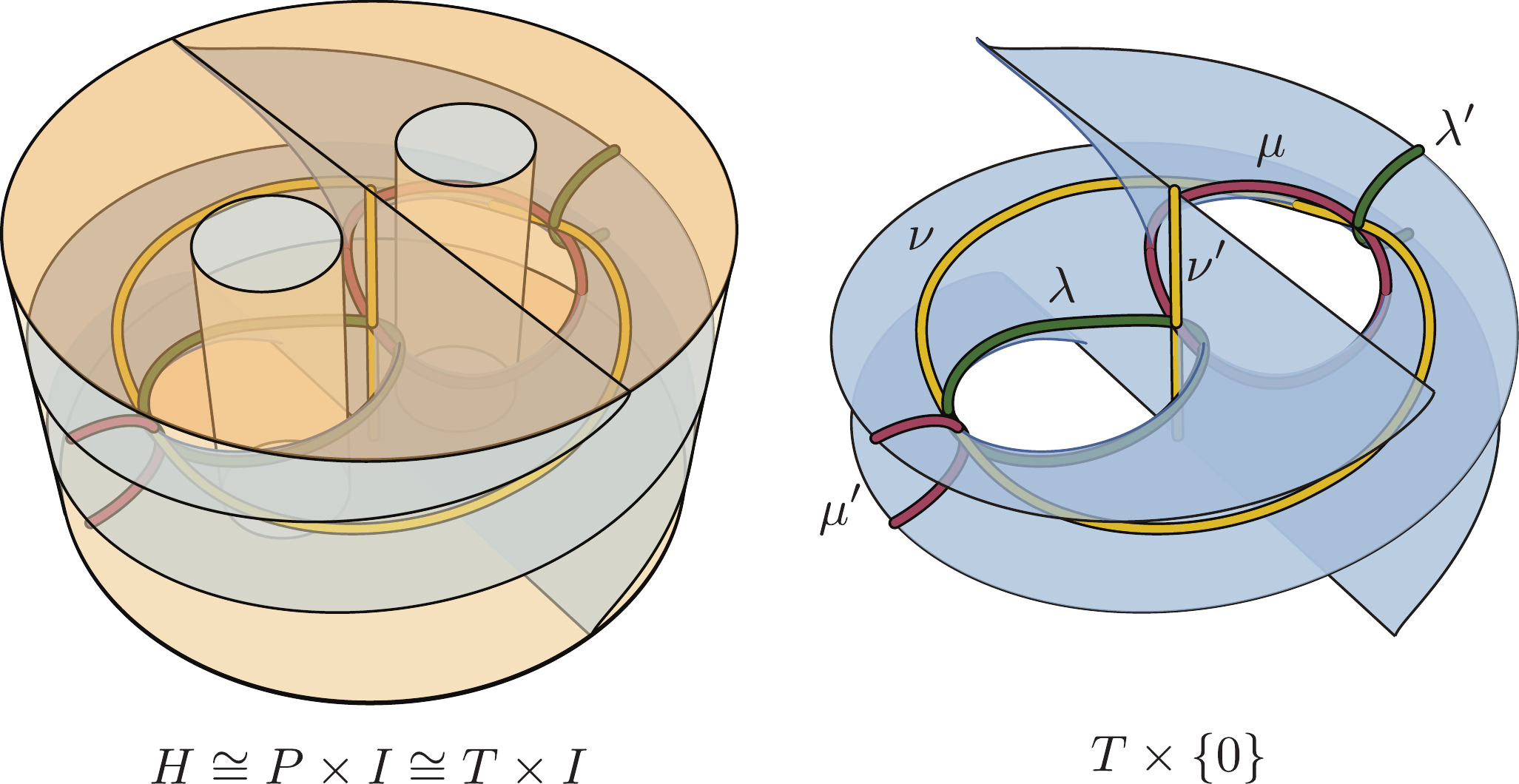}
  \end{center}

  \caption{(Left) A view of $H$ as $P\times I$. (Right) The surface $T\times
  \{0\} \subset H$ is isolated  with its basic curves and arcs.}

  \label{fig:HandP}
\end{figure}

\begin{figure}[h!tb]
  \begin{center}
\includegraphics[width=0.95\textwidth]{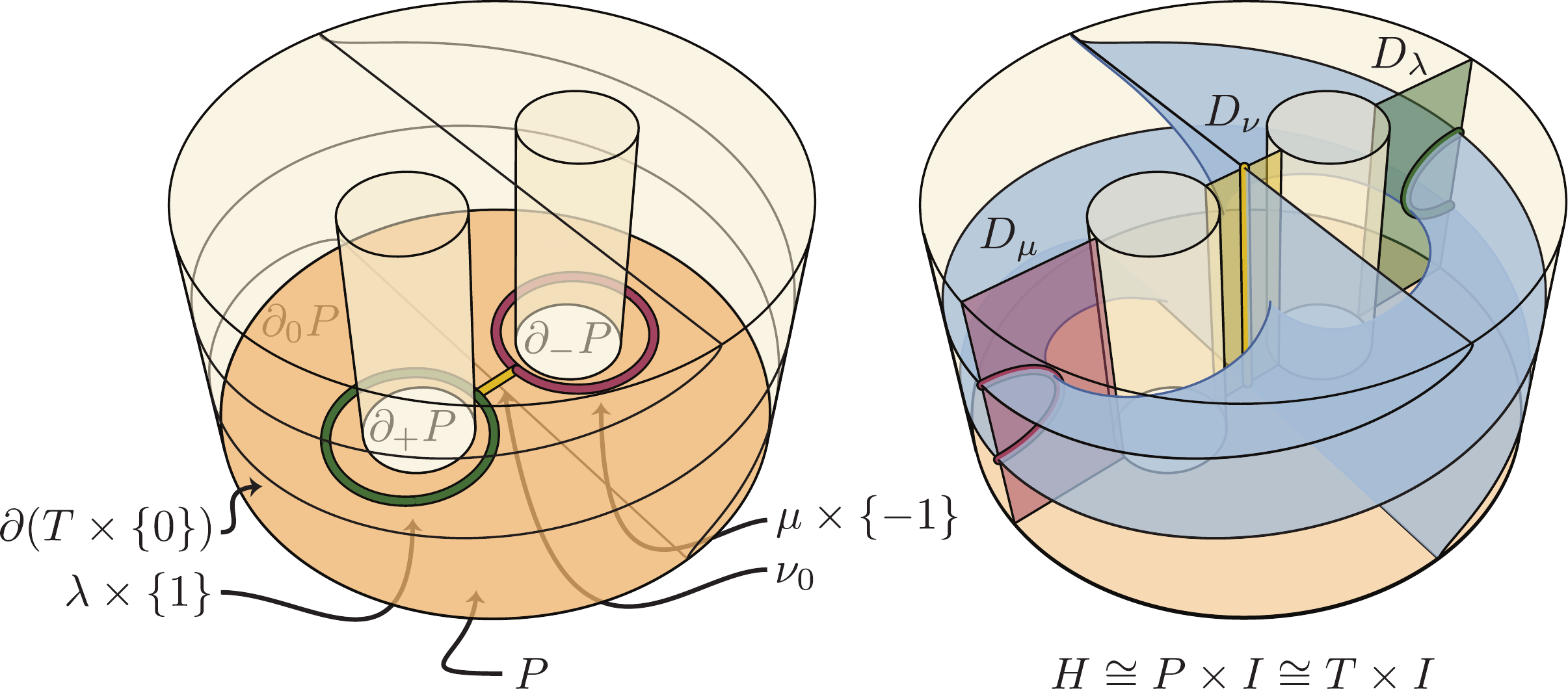}
  \end{center}
  \caption{(Left) Omitting the top and front of this view of $\bdry H$ reveals the surface $P$ and its defining curves from Figure~\ref{fig:xyzV2}.  (Right) The disks $D_\lambda$, $D_\mu$, and $D_\nu$ are also shown relative to $T\times \{0\}$ in $H$.  }
  \label{fig:HandP2}
\end{figure}

The disks $D_{\mu}$, $D_{\lambda}$, and $D_{\nu}$ are disjoint,  non-separating
$\boundary$--compressing disks for $P$ in $H$.  We give them alternate names
according to which component of $\boundary P$ they are disjoint from.  In this
way, we let $D_-=D_{\mu}$ so that $D_-\cap \boundary_-P = \emptyset$,
$D_{+}=D_{\lambda}$ so that $D_+\cap\boundary_+P = \emptyset$, and
$D_0=D_{\nu}$ so that $D_0\cap \boundary_0P=\emptyset$.  Since $H$ is
homeomorphic to the product $P\times I$, the disks $D_-$, $D_0$, and $D_+$ are
also product disks with respect to this product structure and, up to isotopy,
are the  unique non-separating $\boundary$--compressing disks for $P$ in $H$.

\begin{definition}\label{def:distance}
    For properly embedded, connected submanifolds $x,y$, we use the notation
    $\geomint{x}{y}$ for the geometric intersection number, the minimal number of
    intersections between $x$ and $y$ up to proper isotopy.  If $x$ or $y$ has
    multiple components, we take $\geomint{x}{y}$ to be maximum of this over their
    components. In the context of curves in a surface, especially tori,
    $\geomint{x}{y}$ is also referred to as the {\em distance} of $x$ and $y$. For
    oriented submanifolds $x,y$ of an oriented manifold we use $\algint{x}{y}$  for
    their algebraic intersection number. By isotoping $\tau$ in $T$ to intersect
    $\mu',\lambda',\nu'$ coherently we get the following.
\end{definition}

\begin{lemma}\label{L-meets-boundary-compressing-disks}
Let $\tau$ be a \scc in $T$.
  \begin{itemize}
  \item $\abs{\algint{K(\tau)}{D_{\lambda}}} = \abs{\algint{K(\tau)}{D_+}}
    = \geomint{\tau}{\lambda}$,
  \item $\abs{\algint{K(\tau)}{D_{\mu}}} = \abs{\algint{K(\tau)}{D_-}}
    = \geomint{\tau}{\mu}$, and
  \item $\abs{\algint{K(\tau)}{D_{\nu}}} = \geomint{\tau}{\nu}$.
  \end{itemize}
\end{lemma}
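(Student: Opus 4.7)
The plan is to reduce the 3-dimensional algebraic intersection in $H$ to a 2-dimensional one in $T$, and then exploit the defining annular-complement property of each arc $\alpha'\in\set{\mu',\lambda',\nu'}$. First observe that $K(\tau)=\tau\times\set{0}$ lies in the slice $T\times\set{0}$, while the disk $D_\alpha=\alpha'\times I$ meets this slice exactly in $\alpha'\times\set{0}$; hence $K(\tau)\cap D_\alpha=(\tau\cap\alpha')\times\set{0}$. Fixing compatible orientations on $\tau$, on $\alpha'$, and on $I$ shows that the sign of each intersection point in $H$ matches, up to a global sign from the $I$ factor, the sign of $\tau$ meeting $\alpha'$ in $T$. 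Hence $\abs{\algint{K(\tau)}{D_\alpha}}=\abs{\algint{\tau}{\alpha'}}$, with the latter computed in $T$.

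Next, following the hint preceding the statement, I would isotope $\tau$ in $T$ into minimal position simultaneously with the pairwise disjoint arcs $\mu'$, $\lambda'$, $\nu'$. For each $\alpha\in\set{\mu,\lambda,\nu}$, cut $T$ along $\alpha'$ to form an annulus $A_\alpha$ with core $\alpha$. The two copies $\alpha'_+$ and $\alpha'_-$ of $\alpha'$ lie on \emph{different} boundary circles $C_+$ and $C_-$ of $A_\alpha$: the re-gluing that recovers $T$ identifies one arc on $C_+$ with one arc on $C_-$, and were $\alpha'_\pm$ on the same circle, $\partial T$ would fail to be a single circle. Setting $n=\abs{\tau\cap\alpha'}$, the curve $\tau$ decomposes into $n$ arcs in $A_\alpha$, and minimality forces each to run from $C_+$ to $C_-$: an arc with both endpoints on a single boundary circle would be boundary-parallel in the annulus, yielding a bigon along which $\tau$ could be isotoped to reduce intersections with $\alpha'$. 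A small further isotopy inside $A_\alpha$ makes each such arc cross the core $\alpha$ exactly once.

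The main subtlety, and the step I expect to require the most care, is sign-consistency of these $n$ intersections. Orient $\tau$; this orients each arc in $A_\alpha$. When an arc of $\tau$ exits through $\alpha'_-\subset C_-$, the re-gluing $\alpha'_+\sim\alpha'_-$ forces the next arc along $\tau$ to begin at the identified point on $\alpha'_+\subset C_+$, and minimality ensures that arc again runs from $C_+$ to $C_-$. Inductively, all $n$ arcs traverse $A_\alpha$ in the same direction, so all $n$ intersections of $\tau$ with the core $\alpha$ carry the same sign. Therefore
\[
n=\abs{\algint{\tau}{\alpha}}\leq \geomint{\tau}{\alpha}\leq n,
\]
which forces $\geomint{\tau}{\alpha}=n=\abs{\algint{\tau}{\alpha'}}$. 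Combined with the first paragraph, $\abs{\algint{K(\tau)}{D_\alpha}}=\geomint{\tau}{\alpha}$ for each $\alpha\in\set{\lambda,\mu,\nu}$, giving the three bullets via the identifications $D_\lambda=D_+$, $D_\mu=D_-$, $D_\nu=D_0$.
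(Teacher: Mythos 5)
Your argument is correct and is essentially an expanded version of the paper's one-line proof: both reduce the count to the surface $T$ via the product structure $H\cong T\times I$ and then use that $\tau$ can be isotoped to meet each of $\mu',\lambda',\nu'$ coherently, which you justify by cutting $T$ open along $\alpha'$ into the annulus $A_\alpha$. The only step you gloss is that the disk cut off by an inessential arc of $\tau$ in $A_\alpha$ need not be a genuine bigon between $\tau$ and $\alpha'$ --- it may be a square whose boundary also meets $\partial T$, across which $\tau$ cannot be pushed --- but the conclusion you need (no inessential arcs in minimal position, hence coherent crossings) is standard and true, for instance by taking a linear representative of $\tau$ in the punctured torus.
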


\begin{proof}
    The intersection numbers occur in $H \cong T \times I$.
\end{proof}

\begin{lemma}\label{lem:onearc}
    If $D$ is a compressing disk for $H_{K(\tau)}$ such that $D \cap P$ is
    a single arc, then $\tau$ is isotopic to $\mu$, $\lambda$, or $\nu$ in $T$.

    Equivalently, if $P$ is $\bdry$--compressible in $H_{K(\tau)}$, then $\tau$
    is isotopic to $\mu$, $\lambda$, or $\nu$ in $T$.
\end{lemma}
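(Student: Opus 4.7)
The plan is to exploit that $D$ is disjoint from the core $K(\tau)$, together with the uniqueness of non-separating $\bdry$-compressing disks for $P$ in $H$ (recalled right before Definition~\ref{def:distance}), Lemma~\ref{L-meets-boundary-compressing-disks}, and Lemma~\ref{lem:coreofH}.

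Since $K(\tau)$ lies in the interior of $H$, the pair of pants $P$ is a subsurface of $\partial H_{K(\tau)}$, so $\alpha := D\cap P \subset\partial D$ has endpoints on $\partial P$. Writing $\partial D = \alpha\cup\beta$, the complementary arc $\beta$ sits in the other pair of pants $P' := \partial H\setminus\interior P$: its endpoints lie on $\partial P\subset\partial H$, while $\partial N(K(\tau))$ is a separate component of $\partial H_{K(\tau)}$. If $\alpha$ is inessential in $P$, an innermost-arc isotopy makes $D$ disjoint from $P$; the resulting disk has boundary an essential curve in $P'$ (the torus $\partial N(K(\tau))$ is incompressible in the compression body $H_{K(\tau)}$), which must be parallel in $P'$ to $\partial D_\mu$, $\partial D_\lambda$, or $\partial D_\nu$, and then the algebraic intersection argument of case (a) below concludes. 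So I assume $\alpha$ is essential in $P$; then $D$ is a $\bdry$-compressing disk for $P$ in $H$.

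Case (a): $\alpha$ connects two distinct components of $\partial P$. Then $\partial D$ meets some component $\partial_i P$ exactly once, so $[\partial D]\neq 0$ in $H_1(\partial H;\mathbb{Z}/2)$ and $D$ is non-separating in $H$. By the above uniqueness, $D$ is isotopic in $H$ to one of $D_\mu$, $D_\lambda$, $D_\nu$. Since $D$ is disjoint from $K(\tau)$, $\algint{K(\tau)}{D}=0$, while Lemma~\ref{L-meets-boundary-compressing-disks} gives $|\algint{K(\tau)}{D_x}|=\geomint{\tau}{x}$ for $x\in\{\mu,\lambda,\nu\}$; hence $\geomint{\tau}{x}=0$ for the appropriate $x$. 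Essential \sccs on the once-punctured torus $T$ with zero geometric intersection number are isotopic, so $\tau$ is isotopic to $x$.

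Case (b): $\alpha$ has both endpoints on one component of $\partial P$ and separates the other two. Then $D$ is separating in $H$ and splits it into two solid tori $V_1$, $V_2$. Say $\alpha\subset\partial_0 P$, so $\partial_- P\subset\partial V_1$ and $\partial_+ P\subset\partial V_2$; the remaining subcases are analogous. Van Kampen's theorem gives $\pi_1(H)=\pi_1(V_1)\ast\pi_1(V_2)$, free of rank $2$ on core classes $a_1, a_2$. The class $[\mu]=[\partial_- P]$ lies in the free factor $\pi_1(V_1)$ and is primitive in $\pi_1(H)$, so $a_1=[\mu]^{\pm1}$; similarly $a_2=[\lambda]^{\pm1}$. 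Now $K(\tau)\subset V_i$ for some $i$, and by Lemma~\ref{lem:coreofH} represents a primitive element of $\pi_1(H)$; a primitive element of a free product lying in a free factor must generate that factor, so $[K(\tau)]=a_i^{\pm1}$, equal to $[\mu]^{\pm1}$ or $[\lambda]^{\pm1}$. Since free homotopy determines isotopy class for essential \sccs on $T$, $\tau$ is isotopic to $\mu$ or $\lambda$. The main obstacle is case (b): without a clean uniqueness statement for separating $\bdry$-compressing disks, one must leverage that $K(\tau)$ is a core of $H$ to identify the cores of $V_1, V_2$ with $\mu, \lambda$.
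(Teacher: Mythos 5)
Your argument is correct in substance and reaches the lemma's conclusion by a route that partly diverges from the paper's. The paper handles all essential arcs uniformly: fixing the arc $D\cap P$, it isotopes $D\cup K(\tau)$ so that $D$ becomes a product disk of $H\cong P\times I$, observes that $K(\tau)$ is then disjoint from one of the three non-separating product disks $D_\mu$, $D_\lambda$, $D_\nu$ (in the separating-arc case the relevant seam lies in the component of $P$ cut along the arc that does not meet $K(\tau)$), and concludes via Lemma~\ref{L-meets-boundary-compressing-disks}. Your case (a) is essentially this argument, cleanly reorganized around the homological invariance of $\algint{K(\tau)}{D}$, which nicely avoids carrying $K(\tau)$ along an isotopy. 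Your case (b) is genuinely different: instead of producing a disjoint non-separating product disk, you use the free product decomposition $\pi_1(H)=\pi_1(V_1)\ast\pi_1(V_2)$ induced by the separating disk, identify the cores of $V_1,V_2$ from primitivity of $[\bdry_-P]=[\mu]$, $[\bdry_+P]=[\lambda]$ (or $[\bdry_0P]=[\nu]$ in the other subcases), and locate $[K(\tau)]$ in one factor. This works and does not require straightening $D$ to a product disk, but it leans on two facts you only assert: that $D$ is separating (true: $\bdry D$ is disjoint from two of the three components of $\bdry P$, and since $[\mu\times\{-1\}]$ and $[\lambda\times\{1\}]$ pair nondegenerately with $\ker\bigl(H_1(\bdry H;\Z/2)\to H_1(H;\Z/2)\bigr)$, this forces $[\bdry D]=0$), and that $\bdry_-P$ and $\bdry_+P$ lie in different complementary solid tori (true: otherwise one component of $\bdry H\cut\bdry D$, a once-punctured torus, would have to embed in an annulus component of $\closure{\bdry H\setminus P}$ cut along $\bdry D$).

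One statement in your preliminary reduction is wrong, though harmlessly so. If the arc $D\cap P$ is inessential and you isotope $D$ off $P$, the resulting essential curve in $\closure{\bdry H\setminus P}$ is parallel to a component of $\bdry P$, not to $\bdry D_\mu$, $\bdry D_\lambda$, or $\bdry D_\nu$: those three curves each meet $P$ in an arc and bound disks in $H$, whereas the components of $\bdry P$ are the curves $\mu\times\{-1\}$, $\lambda\times\{1\}$, and $\bdry_0P\simeq\nu$, none of which is null-homotopic in $H$. The correct conclusion is therefore an immediate contradiction --- no such compressing disk exists, so the arc must be essential (this is what the paper means by invoking incompressibility of $P$ and $\bdry H\cut P$). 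Since that case is vacuous either way, your overall proof stands.
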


\begin{proof}
    Since both $P$ and $\bdry H \cut P$ are incompressible in $H \cong P \times
    I$, the arc of $D \cap P$ must be  essential in $P$.   Then, fixing this
    arc, we may isotop $D \cup K(\tau)$ so that $D$ is a product disk in $H
    \cong P\times I$.  Therefore $K(\tau)$ must be disjoint from one of the
    non-separating product disks $D_\mu$, $D_\lambda$, or $D_\nu$.   By
    Lemma~\ref{L-meets-boundary-compressing-disks}, $\tau$ may be isotoped in
    $T$ to be disjoint  from $\mu$, $\lambda$, or $\nu$ correspondingly.  Thus
    $\tau$ is isotopic to that curve.
\end{proof}

\begin{lemma}\label{lem:twoarcs}
    Assume $\tau$ is not isotopic to $\mu$, $\lambda$, or $\nu$ in $T$.  If $D$
    is a compressing disk for $H_{K(\tau)}$ such that $D \cap P$ is a pair of
    arcs $a_1 \cup a_2$, then there are meridian disks $D_1$, $D_2$ such that
    $D_i \cap P = a_i$ with a banding disjoint from $P$ that produces $D$.
\end{lemma}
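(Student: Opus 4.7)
The plan is to exhibit $D$ explicitly as a band sum of two meridian disks of $H$ along a band in $\partial H \setminus P$.

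First I would observe that $D$ is actually a meridian disk of the handlebody $H$ disjoint from $K(\tau)$. By Lemma~\ref{lem:coreofH}, $K(\tau)$ is a core curve of $H$, so $H_{K(\tau)}$ is a compression body with negative boundary the torus $\partial N(K(\tau))$. Since this torus is incompressible in $H_{K(\tau)}$, the boundary $\partial D$ must lie in $\partial H$. In particular, $\partial D \cap P = D \cap P = a_1 \cup a_2$, and $\partial D$ decomposes cyclically into four arcs $a_1 \cup b_1 \cup a_2 \cup b_2$, where $b_1, b_2 \subset \partial H \setminus P$. Label the endpoints of $a_1, a_2$ as $p_1, p_2$ and $p_3, p_4$, occurring on $\partial D$ in cyclic order $p_1, p_2, p_3, p_4$.

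Next I would construct the band decomposition inside $D$. Choose two disjoint, properly embedded arcs $e_1, e_2$ in the interior of $D$ with $\partial e_1 = \{p_1, p_2\}$ and $\partial e_2 = \{p_3, p_4\}$, each $e_i$ co-bounding a subdisk $D_i'' \subset D$ with $a_i$. Cutting $D$ along $e_1 \cup e_2$ yields three pieces: $D_1''$, $D_2''$, and a central rectangle $R$ whose boundary cycles through $e_1 \cup b_1 \cup e_2 \cup b_2$. I would then push $R$ off $D$ via a bicollar of $\partial H$ in $H$, displacing $e_1, e_2$ into arcs $\gamma_1, \gamma_2 \subset \partial H \setminus P$. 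This carries each $D_i''$ onto a properly embedded disk $D_i \subset H$ with $\partial D_i = a_i \cup \gamma_i$, hence $D_i \cap P = a_i$. The two disks $D_1, D_2$ are then disjoint (pushed to opposite normal sides of $R$), and the pushed-off $R$ becomes a band $N(\alpha) \subset \partial H \setminus P$ whose band sum $D_1 \natural_\alpha D_2$ recovers $D$ up to isotopy.

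The main obstacle is verifying the push-off step: that the rectangle $R$, a disk in $H$ with two sides $b_1, b_2$ on $\partial H$ and two interior sides $e_1, e_2$, is genuinely $\partial$-parallel into $\partial H \setminus P$ rel $b_1 \cup b_2$. I expect this to follow from the product structure $H \cong P \times I$ of Figure~\ref{fig:HandP}: after isotoping $D$ into general position with respect to the product projection $H \to I$ and using that $P$ is incompressible and $\bdry$--incompressible in $H$, the interior arcs $e_1, e_2$ of $R$ can be straightened by the bicollar of $\partial H$ without interacting with $P$. One must additionally verify that the resulting boundary circles $\partial D_i$ are essential in $\partial H$, so that $D_1, D_2$ are honest meridian disks; otherwise $D$ would be isotopic to a single product disk, forcing $D \cap P$ to be a single arc and contradicting the hypothesis on $\tau$ via Lemma~\ref{lem:onearc}.
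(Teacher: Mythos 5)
You have correctly reduced the lemma to a single claim --- that the central rectangle $R$ can be pushed into $\partial H \setminus P$ --- but that claim carries all of the content of the lemma, and as stated it is false for an arbitrary choice of the arcs $e_1, e_2$. Indeed, choose $e_1, e_2$ to cut off thin collar neighborhoods of $a_1$ and $a_2$ in $D$. Then $D_1''$ and $D_2''$ are parallel into $\partial H$, so if $R$ were also parallel into $\partial H \setminus P$, the whole of $D = D_1'' \cup R \cup D_2''$ would be isotopic into $\partial H$, contradicting that $D$ is a compressing disk. So the push-off genuinely fails for some (in fact most) choices of $e_1, e_2$: the splitting arcs must be located using geometric input, and your proposal supplies none. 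The appeal to general position with respect to the product projection does not do this work, and the assertion that ``$P$ is incompressible and $\partial$--incompressible in $H$'' is incorrect --- $P$ is $\partial$--compressible in $H$ via the product disks $D_\mu$, $D_\lambda$, $D_\nu$; it is only $\partial$--incompressible in $H_{K(\tau)}$ under the hypothesis on $\tau$, by Lemma~\ref{lem:onearc}.

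The paper's proof is organized precisely around locating the correct splitting arc. It takes two auxiliary arcs in $P$ parallel to $a_1$ that separate $a_1$ from $a_2$ in $P$, and considers the product disks $F_1, F_2$ over them. After minimizing $\abs{D\cap(F_1\cup F_2)}$ with $D\cap P$ held fixed, the intersection is nonempty because $F_1\cup F_2$ cuts a ball off of $H$ containing $a_1$ but not $a_2$. An arc of $D\cap F_1$ outermost in $F_1$ away from $P$ cuts off a subdisk $F_1'$ of $F_1$; if this arc also cut off a subdisk of $D$ disjoint from $P$, the union of the two subdisks would be $\partial$--parallel in $H\cong P\times I$ and the intersection could be reduced. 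Hence the outermost arc separates $a_1$ from $a_2$ in $D$ --- this is the correct choice of your $e_i$ --- and surgering $D$ along $F_1'$ produces two disks that are isotopic rel $P$ to product disks, exhibiting $D$ as their band sum along an arc in $\partial H\setminus P$. It is this outermost-arc surgery, absent from your proposal, that simultaneously finds the right decomposition and certifies that the two pieces are meridian disks with $D_i\cap P = a_i$. The remainder of your setup (the cyclic decomposition of $\partial D$ and the essentiality check at the end) is fine but peripheral.
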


\begin{proof}
    Let $D$ be a compressing disk for $H_{K(\tau)}$ such that $D \cap P$ is
    a pair of arcs $a_1 \cup a_2$.  By Lemma~\ref{lem:onearc}, these arcs must
    be essential in $P$.

    Let $b_1$ and $b_2$ be two arcs in $P$ parallel to $a_1$ so that each
    component of $P\setminus(b_1\cup b_2)$ contains exactly one of $a_1$ or
    $a_2$.  Let $F_1$ and $F_2$ denote the product disks $b_1\times I$ and
    $b_2\times I$, respectively, and choose $D$ to minimize $\abs{D\cap
    (F_1\cup F_2)}$ subject to the constraint that $D\cap P$ consists of the
    two arcs $a_1$ and $a_2$.  Isotop $K(\tau)$ to minimize $\abs{K(\tau)\cap
    (F_1\cup F_2)}$, keeping $K(\tau)$ disjoint from $D$.  Since $F_1\cup F_2$
    separates a ball $B$ from $H$ containing $a_1$ but not $a_2$, $D \cap (F_1
    \cup F_2) \neq \emptyset$.

    Let us assume, without loss of generality, that $D\cap F_1\neq\emptyset$.
    We may assume that there are no simple closed curves of intersection in
    $D\cap F_1$ since $H_{K(\tau)}$ is irreducible.  Let $c$ be an arc of
    $D\cap F_1$ outermost in $F_1$ away from $b_1$ which  cuts off a subdisk
    $F_1'$ of $F_1$ whose interior does not meet  $D$ or $b_1$.

    Assume $c$ cuts off a subdisk $D'$ of $D$ which is disjoint from $P$.  Then
    $F_1' \cup D'$ is a disk disjoint from $P$, and thus it must be
    $\bdry$--parallel in $H \cong P \times I$.  Hence there is an isotopy of $D
    \cup K(\tau)$ fixing $D \cap P$  that reduces $\abs{D\cap (F_1\cup F_2)}$.
    (We may surger $D$ along $F_1'$ and further push off of $F_1'$ to create
    a new disk that intersects $F_1 \cup F_2$ fewer times than $D$.  Since
    $F_1' \cup D'$ is boundary parallel, this surgery may be realized by an isotopy
    of $D'$ to $F_1'$.)

    Therefore $c$ divides $D$ into two halves each containing one of $a_1$ or
    $a_2$.  Surgering along $F_1'$ we obtain two parallel
    $\boundary$--compressing disks for $P$ in $H$ (they are both isotopic,
    relative to $P$, to product disks).  Thus we have shown that $D$ consists
    of $D_1$ and $D_2$, isotopic to product disks, tubed along an arc in
    $\boundary H\setminus P$.
\end{proof}

\begin{lemma}\label{lem:min-D-cap-P}
    If $\tau \not \in \{\lambda, \mu, \nu, \lambda-\mu, \lambda+\nu, \mu+\nu\}$,
    then the boundary of any compressing disk of $\partial H$ in $H_{K(\tau)}$
    meets $P$ in at least $3$ arcs.
\end{lemma}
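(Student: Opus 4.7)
I would prove the contrapositive by bounding $n:=\abs{\bdry D\cap P}$ case by case, showing that $n\le 2$ forces $\tau\in\{\mu,\lambda,\nu,\lambda-\mu,\mu+\nu,\lambda+\nu\}$. After a standard innermost-disk/outermost-arc minimization (using that both $P$ and $P':=\closure{\bdry H\cut P}$ are incompressible in $H$), every component of $\bdry D\cap P$ is an essential arc in $P$.

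If $n=0$, then $\bdry D$ is an essential simple closed curve in the pair of pants $P'$, hence isotopic in $\bdry H$ to one of $\bdry_-P$, $\bdry_+P$, $\bdry_0P$. These curves bound disks isotopic to $D_\mu$, $D_\lambda$, $D_\nu$, respectively, and since non-separating meridian disks in a handlebody are determined up to isotopy by their boundaries (use irreducibility of $H$), $D$ itself is isotopic in $H$ to one of $D_\mu$, $D_\lambda$, $D_\nu$. Lemma~\ref{L-meets-boundary-compressing-disks} then forces $\tau\in\{\mu,\lambda,\nu\}$, contrary to hypothesis. The case $n=1$ is Lemma~\ref{lem:onearc} directly.

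The substantive case is $n=2$. Lemma~\ref{lem:twoarcs} writes $D$ as a band sum $D_1\cup b\cup D_2$ of two product disks in $H$ with band $b\subset\bdry H\cut P$, each $D_i$ meeting $P$ in a single essential arc. Up to isotopy, the only non-separating product disks of $H\cong P\times I$ are $D_\mu$, $D_\lambda$, $D_\nu$, so each $D_i$ is isotopic in $H$ to one of these. With compatible orientations the banding gives $[D]=\pm[D_1]\pm[D_2]$ in $H_2(H,\bdry H)$, so $\algint{K(\tau)}{D}=0$ yields
\[
\abs{\algint{K(\tau)}{D_1}}\;=\;\abs{\algint{K(\tau)}{D_2}}.
\]
Writing $\tau=\tau(p,q)$ with $\gcd(p,q)=1$ and combining Lemma~\ref{L-meets-boundary-compressing-disks} with the standard formulas $\geomint{\tau}{\mu}=\abs{q}$, $\geomint{\tau}{\lambda}=\abs{p}$, $\geomint{\tau}{\nu}=\abs{p-q}$, the three distinct-pair subcases $\{D_i,D_j\}\subseteq\{D_\mu,D_\lambda,D_\nu\}$ produce the equations $\abs{p}=\abs{q}$, $\abs{q}=\abs{p-q}$, $\abs{p}=\abs{p-q}$, whose primitive solutions are precisely the additional classes $\lambda-\mu$, $\mu+\nu$, $\lambda+\nu$.

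The main obstacle is the degenerate subcase $D_1\simeq D_2$ in Case $n=2$, where algebraic intersection with $K(\tau)$ can vanish for reasons unrelated to $\tau$. Here the band either lies in the annulus cobounded by $\bdry D_1, \bdry D_2$ in $\bdry H$, in which case $\bdry D$ bounds a disk in $\bdry H$ (contradicting essentiality of $D$), or it runs over the other handle, in which case $[\bdry D]=0$ in $H_1(\bdry H)$ and $D$ is a separating disk. The separating subcase I would dispatch by cutting $H$ along $D$ into two solid tori, locating $K(\tau)$ in one of them, and using Lemma~\ref{lem:coreofH} together with the two nearby parallel product disks to squeeze $\tau$ into the isotopy class of the corresponding $\xi\in\{\mu,\lambda,\nu\}$.
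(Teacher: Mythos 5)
There is a genuine gap in your treatment of the main case $n=2$. From Lemma~\ref{lem:twoarcs} you obtain $D$ as a banding of two product disks $D_1,D_2$ with $D_i\cap P=a_i$ an essential arc, but you then assert that each $D_i$ is one of the non-separating product disks $D_\mu,D_\lambda,D_\nu$. That does not follow: a pair of pants has six isotopy classes of essential arcs --- three seams and three separating arcs --- so a priori one or both of $a_1,a_2$ could be separating (making $D_i$ a separating product disk), or the two arcs could be parallel seams. Your homological identity $[D]=\pm[D_1]\pm[D_2]$ gives no information when both $D_i$ are separating (then $[D]=0$ in $H_2(H,\bdry H)$), and your ``degenerate subcase'' treats only $D_1\simeq D_2$ among the non-separating disks; moreover the proposed resolution there (``squeeze $\tau$ into the isotopy class of $\xi$'') is not an argument, since the disks $D_1,D_2$ produced by Lemma~\ref{lem:twoarcs} need not be disjoint from $K(\tau)$ --- only their banding $D$ is, and product disks such as $D_\mu$ genuinely meet $K(\tau)$. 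The paper's proof spends essentially all of its effort at exactly this point: it classifies the four homeomorphism types of pairs $(P,a_1\cup a_2)$ of disjoint essential arcs together with the possible banding arcs $\alpha\subset P\times\{1\}$, and shows that in every configuration but one (two non-isotopic seams with one specific band) the arc $\alpha$ is $\bdry$--parallel, which would let one reduce $\abs{D\cap P}$ and contradict minimality. Only after that reduction does one know that $\{D_1,D_2\}$ is a pair of distinct disks among $D_\mu,D_\lambda,D_\nu$. Your intersection-number computation from that point on is fine, and is in fact a clean alternative to the paper's geometric argument (which uses Lemma~\ref{lem:primitivecore} to show $K(\tau)$ meets the three product disks with multiset $\{1,1,2\}$); but without the arc-and-band classification the case analysis is incomplete.

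Two smaller points. Your $n=0$ case contains a false statement: the curves $\bdry_-P$, $\bdry_+P$, $\bdry_0P$ are isotopic in $\bdry H$ to $\mu\times\{-1\}$, $\lambda\times\{1\}$, and $\nu$, which are primitive and nontrivial in $\pi_1(H)$ and hence bound no disks in $H$; the boundaries of $D_\mu,D_\lambda,D_\nu$ are different curves, each meeting $P$ in one arc. The correct conclusion for $n=0$ is simply that the case cannot occur, which already follows from the incompressibility of $\closure{\bdry H\cut P}$ that you invoke at the outset. Finally, note that in the mixed configuration (one $D_i$ separating, one a seam disk) your homological argument does work and forces $\tau\in\{\mu,\lambda,\nu\}$, so that subcase is salvageable; it is the two-separating-arcs and two-parallel-seams configurations that genuinely require the band analysis you omit.
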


\begin{proof}
    Let $D$ be a compressing disk in $H_{K(\tau)}$, isotoped to minimize
    $\abs{D\cap P}$.  If $\abs{D\cap P}=1$ then Lemma~\ref{lem:onearc} implies
    that $\tau$ is $\mu$, $\lambda$, or $\nu$ (in which case
    $\{\geomint{\tau}{\lambda},\geomint{\tau}{\mu}, \geomint{\tau}{\nu}\}
    = \{0,1,1\}$).   We now proceed to show that if $\abs{D\cap P}=2$, then
    $\{\geomint{\tau}{\lambda},\geomint{\tau}{\mu}, \geomint{\tau}{\nu}\}
    = \{1,1,2\}$ as multisets.  It is a simple exercise to see that this
    implies that $\tau$ is one of the curves $\lambda-\mu, \lambda+\nu$, or
    $\mu+\nu$.

    Assuming that $\abs{D\cap P}=2$, set $D \cap P = a_1 \cup a_2$.
    Lemma~\ref{lem:twoarcs} implies that $D$ is isotopic to the banding of
    product disks $a_1 \times I, a_2 \times I$ of $H\cong P \times I$ along an
    arc $\alpha$ disjoint from $P = P \times \{-1\}$.  We may isotop $\alpha$ to
    be an arc in the interior of $P \times \{1\}$ meeting each $a_1 \times
    \{1\}$ and $a_2 \times \{1\}$ in its endpoints.  Say such an arc $\alpha$
    is $\bdry$--parallel if $\alpha$ together with a subarc of each $a_1 \times
    \{1\}$, $a_2 \times \{1\}$, and $\bdry P \times \{1\}$ bounds a rectangular
    disk $R$ in $P \times \{1\}$; this rectangle guides an isotopy of the disk
    $D$ obtained by banding of $D_1$ and $D_2$ along $\alpha$ that reduces
    $\abs{D\cap P}$.  Hence in our situation, the arc $\alpha$ cannot be
    $\bdry$--parallel.

    \begin{figure}[h!tb]
        \begin{center}
            \includegraphics[width=0.9\textwidth]{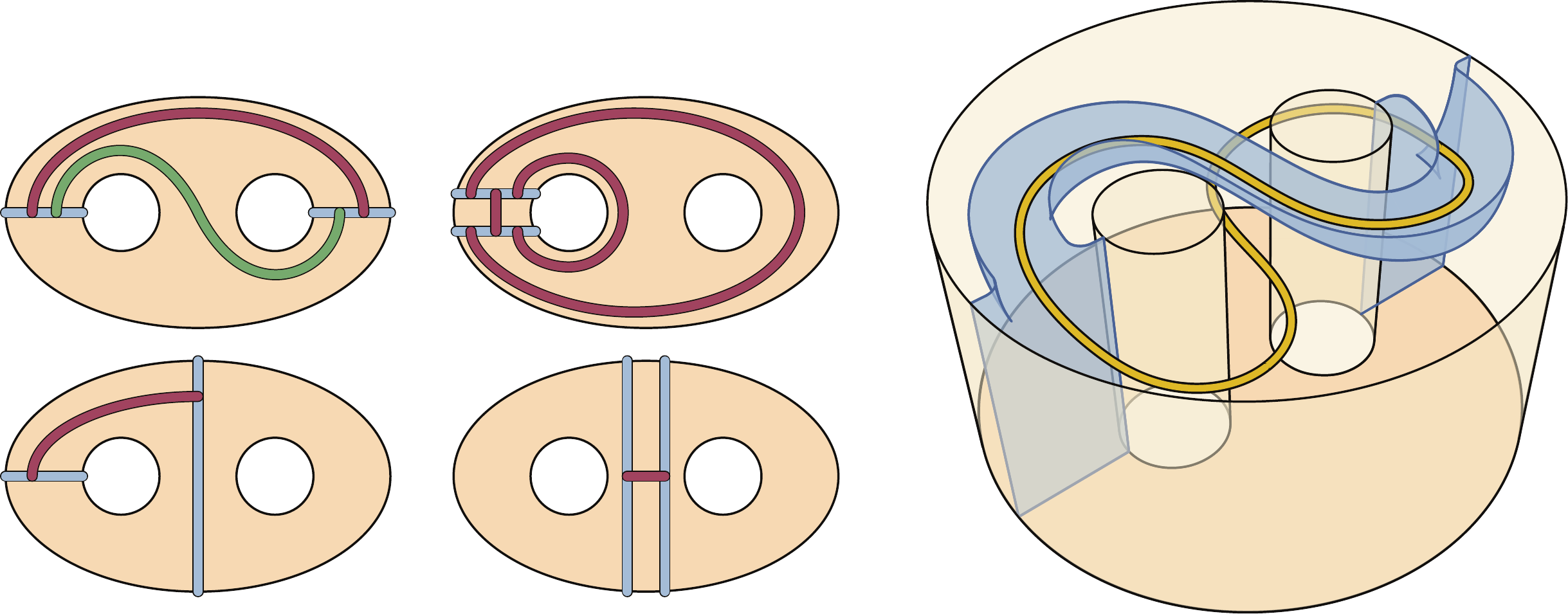}
        \end{center}
        \caption{(Left) The four possibilities for $(P, a_1 \cup a_2)$ up to
            homeomorphism.  Each shows the possible arcs $\alpha$, up to
            homeomorphism, that join the arcs $a_1$ and $a_2$.   The one which
            is not $\bdry$--parallel is in the top left. (Right)  The
            compressing disk in $H_{K(\tau)}$ up to homeomorphism.  The knot
            $K(\tau)$ intersects two of $D_\lambda$, $D_\mu$, and $D_\nu$ once
            and the other one twice.  ($D_\lambda$, $D_\mu$, and $D_\nu$ are
            the three non-separating product disks of $H \cong P\times I$.)
            }
        \label{fig:bandingsofdisks}
    \end{figure}

    Figure~\ref{fig:bandingsofdisks}(Left) exhibits the four homeomorphism
    types of  pairs of essential arcs $a_1, a_2$ embedded in $P$ along with the
    homeomorphism types of arcs $\alpha$ that connect $a_1$ and $a_2$.   By
    inspection one sees that there is a single homeomorphism type of $(P, a_1
    \cup a_2, \alpha)$ for which $\alpha$ is not $\bdry$--parallel; this is
    highlighted in Figure~\ref{fig:bandingsofdisks}(Left).  In particular $a_1$
    and $a_2$ are non-isotopic arcs in $P$ that are each non-separating.  Hence
    we must have that $D_1$ and $D_2$ are two of the three non-separating
    product disks $D_\lambda$, $D_\mu$, $D_\nu$ and $D$ is a non-separating
    disk obtained by banding them together in the manner indicated.  This gives
    six possibilities for $D$, all equivalent to
    Figure~\ref{fig:bandingsofdisks}(Right) by a level preserving homeomorphism
    of $P\times I$.  Regardless of which of these six actually is $D$, since
    $K(\tau)$ is isotopic to the core curve of the solid torus $H \cut D$ by
    Lemma~\ref{lem:primitivecore}, we can then see that $K(\tau)$ intersects
    two of the disks $D_\lambda$, $D_\mu$, $D_\nu$ just once and the third disk
    twice as claimed.
\end{proof}


\section{The big handlebody}\label{section:big-handlebody}
In this section we construct knots in handlebodies which have nontrivial
surgeries yielding either handlebodies or Seifert fiber spaces with attached
$1$--handles. To get the originating handlebody, we glue the handlebody $H$ of
the previous section to another handlebody along the pair of pants $P$.
The knots of interest are those in $H$, of the preceding section, under this
embedding of $H$.

\begin{definition}\label{def:primitive}
    A \scc in the boundary of a handlebody is  \defn{primitive} if there is
    a meridian disk for the handlebody meeting the curve exactly once --- we
    refer to such a disk as a \defn{primitivizing disk} for the curve.
    Equivalently, a  \scc in the boundary of a handlebody is primitive if
    attaching a $2$--handle along it results in a handlebody.
\end{definition}

\begin{definition}\label{def:jointlyprimitive}
    Let $H'$ be a genus $g>1$ handlebody.
    We say that a set
    $\set{a_1,\dots,a_n}$ of disjoint \sccs is \defn{jointly primitive} if
    there exist disjoint disks $\set{D_1,\dots, D_n}$ in $H'$ such that
    $\abs{a_i\cap D_j}=\delta_{ij}$.  This is equivalent to the statement
    that attaching $2$--handles to $H'$ along any subset of curves in
    $\set{a_1,\dots, a_n}$ results in a handlebody~\cite{Gordon87,Wu92b}.
\end{definition}

\begin{definition}\label{def:boundaryincompressible}
    A surface, $P$, in the boundary of $3$--manifold $M$ will be said to be
    {\em $\boundary$--compressible} if there is a disk properly embedded in $M$
    that intersects $P$ is a single arc which is not parallel into $\partial P$.
    Otherwise $P$ is  {\em $\boundary$--incompressible}.
\end{definition}

\begin{definition}\label{def:handlebodytype}
    Let $a$, $b$, and $c$ be disjoint \sccs on genus $g>1$ handlebody
    $\boundary H'$ such that $a$ and $b$ are jointly primitive and  $a\cup
    b\cup c$ bounds a pair of pants $P'\subseteq \bdry H'$.  It will turn out
    that such a pair $(H', P')$ leads to a handlebody containing knots with
    nontrivial handlebody surgeries, so we say that such a pair is of
    \defn{handlebody type}.  If furthermore, there is no meridian disk or
    essential annulus of $H'$ disjoint from $c$ (i.e.\ $c$ is `disk-busting'
    and `annulus-busting', see section~\ref{sec:busting}), we say that
    $(H',P')$ is of \defn{strong handlebody type}.
\end{definition}

\begin{definition}\label{def:seiferttype}
    Let $a$, $b$, and $c$ be disjoint \sccs on $\boundary H'$ such that $a$ and
    $b$ are primitive, $a$ and $b$ are parallel, and $a\cup b\cup c$ bounds
    a pair of pants $P'\subseteq\boundary H'$ that does not lie in the
    parallelism between $a$ and $b$.  In other words, $P'$ is gotten by banding
    annular neighborhoods of $a$ and $b$ in the complement of the annulus
    cobounded by $a$ and $b$.  A pair $(H', P')$ satisfying these conditions
    leads to knots in handlebodies which have surgeries that are the union of
    $1$--handles with a Seifert fiber space over the disk with two
    exceptional fibers.  So we say that such a  pair is of \defn{Seifert type}.
    Furthermore, we say $(H',P')$ is  of \defn{strong Seifert type} if the
    following two conditions hold:

    \begin{itemize}
        \item  There is no meridian disk or essential annulus of $H'$ disjoint
            from $c$.

        \item  If $H'$ has genus 2, then $P'$ is $\boundary$--incompressible in
            $H'$.
    \end{itemize}
\end{definition}

\begin{lemma}\label{lem:old3diskbusting}
    Let $(H',P')$ be a genus $g\geq2$ handlebody and a pair of pants of either
    handlebody or Seifert type with $\boundary P'=a \cup b \cup c$ as above.
    Assume $c$ is disk-busting in $H'$. Then $P'$ and $\closure{\boundary
    H'\setminus P'}$  are incompressible in $H'$.  Furthermore, $P'$ and
    $\closure{\boundary H'\setminus P'}$ are $\boundary$--incompressible in
    $H'$ unless $g=2$, $(H',P')$ is of Seifert type, and $H'=T'\times I$ with
    $c = \bdry T'\times \{pt\}$ where $T'$ is a once-punctured torus.  In
    particular,  if $c$ intersects each  meridian disk of $H'$  at least three
    times ($c$ is $3$--disk-busting) and intersects each essential annulus of
    $H'$ ($c$ is annulus-busting), then
    $(H',P')$ is of strong handlebody or Seifert type.
\end{lemma}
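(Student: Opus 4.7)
The plan is to establish incompressibility of $P'$ and $Q' := \closure{\partial H' \setminus P'}$ first, then boundary-incompressibility (isolating the exceptional case), and finally to deduce the ``In particular'' clause. For incompressibility of $P'$, any essential simple closed curve in the pair of pants $P'$ is isotopic in $P'$ to one of $a$, $b$, or $c$. Given a compressing disk $D$, isotopy in $P'$ puts $\partial D$ parallel to one of these in $\partial H'$, and $D$ can then be pushed off to give a meridian disk of $H'$ disjoint from the corresponding curve. Disjointness from $c$ contradicts disk-busting; disjointness from $a$ or $b$ contradicts primitivity, since a primitive curve represents a generator of a free factor of $\pi_1(H')$ and is not null-homotopic. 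For $Q'$, a compressing disk $D$ either has $\partial D$ essential in $\partial H'$ --- in which case $D$ is a meridian disk disjoint from $c$, contradicting disk-busting --- or $\partial D$ bounds a disk $E \subset \partial H'$ with $D \cup E$ bounding a $3$-ball by irreducibility. Essentiality of $\partial D$ in $Q'$ then forces $E$ to contain a component of $\partial P' = a \cup b \cup c$, which consequently bounds a disk in $\partial H'$, contradicting primitivity (for $a, b$) or disk-busting (for $c$, since a null-homotopic curve cannot be disk-busting).

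For boundary-incompressibility, let $D$ be a $\partial$-compressing disk for $P'$ with essential arc $\alpha = D \cap P'$ and arc $\beta = D \cap Q'$. The arc $\alpha$ lies in one of six essential isotopy classes in the pair of pants (three connecting distinct components, three joining a component to itself and separating the remaining two). A boundary compression of $P'$ along $D$ produces a surface $P^*$ of Euler characteristic $0$, consisting of one or two annuli, which remains incompressible by a standard disk-exchange argument using the incompressibility of $P'$. The boundary curves of $P^*$ are built from $c$, from $a$ and $b$ banded along $\beta$, or from individual pieces of $\partial P'$. For each configuration of $\alpha$, primitivity of $a, b$ together with disk-busting of $c$ (via the same mechanisms as in the incompressibility step) rule out every outcome for $P^*$ except an essential annulus in $H'$ with a boundary component isotopic to $c$. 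In the handlebody type, such an annulus combined with the disjoint primitivizing disks of $a$ and $b$ yields a meridian disk of $H'$ disjoint from $c$, contradicting disk-busting. In Seifert type with $g \ge 3$, the additional handles of $H'$ (beyond the parallelism between $a$ and $b$) play the analogous role. The only remaining possibility is Seifert type with $g = 2$, where the essential annulus cobounded by $c$, the parallelism annulus between $a$ and $b$, and a common primitivizing disk for $a$ and $b$ assemble to present $H'$ as $T' \times I$ with $c = \partial T' \times \{pt\}$; this is the exceptional configuration. The argument for $Q'$ is analogous with the roles of $\alpha$ and $\beta$ exchanged.

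For the ``In particular'' clause, in the exceptional configuration every non-separating product meridian disk of $T' \times I$ meets $c = \partial T' \times \{1/2\}$ in exactly two points, and any essential simple closed curve in $T'$ disjoint from $\partial T'$ sweeps out an essential annulus in $H'$ disjoint from $c$; thus $3$-disk-busting or annulus-busting alone already excludes the exception, and under both hypotheses $(H', P')$ is of strong type. The main obstacle is the boundary-incompressibility case analysis: enumerating the six arc classes of $\alpha$, tracking the resulting annuli through the handlebody/Seifert distinction, and recognizing the product structure in the exceptional $g = 2$ Seifert case by assembling the essential annulus cobounded by $c$ with the parallelism annulus and primitivizing disks to span all of $H'$.
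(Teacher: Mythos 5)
Your framing is sound---incompressibility is the easy part, the content is $\partial$-incompressibility, the exception is the genus-$2$ Seifert product, and the ``in particular'' clause follows because a product disk of $T'\times I$ meets $c$ twice while a vertical annulus over a non-separating curve in $T'$ misses $c$---and your first and third paragraphs are essentially fine. But the middle paragraph, which is the entire content of the lemma, is asserted rather than argued, and the one concrete claim you make about the case analysis is wrong. The configurations in which $\partial$-compressing $P'$ yields an annulus with a boundary component isotopic to $c$ are exactly those where the arc $\alpha=D\cap P'$ misses $c$ (a seam from $a$ to $b$, or an arc from $a$ or $b$ to itself); these are killed at once because $D$ itself is then an essential disk disjoint from $c$ (it cannot be boundary-parallel since $\alpha$ is essential in $P'$), contradicting disk-busting. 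A seam with exactly one endpoint on $c$ makes $D$ an essential disk meeting $c$ once, impossible for a disk-busting curve on a genus $\geq 2$ handlebody (band two copies of $D$ together along $c$). The only surviving configuration is $\alpha$ a separating arc joining $c$ to itself, and there the compressed surface is a pair of annuli with boundaries $a\cup c_a$ and $b\cup c_b$, where $c_a,c_b$ are obtained by cutting $c$ at $\partial\alpha$ and closing up with copies of $\beta=\partial D\setminus \alpha$; neither new boundary component is isotopic to $c$. So your stated dichotomy singles out the wrong surviving case, which suggests the enumeration was not actually carried out.

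From that point on, ``combined with the disjoint primitivizing disks \dots yields a meridian disk of $H'$ disjoint from $c$,'' ``the additional handles \dots play the analogous role,'' and ``assemble to present $H'$ as $T'\times I$'' are conclusions, not constructions; nothing is supplied that produces the contradictory disk or the product structure. For comparison, the paper's route is: reduce to the separating arc from $c$ to itself as above; in handlebody type, take a primitivizing disk for $a$ disjoint from $b$, surger it off $D$, and use it to produce a disk meeting $c$ exactly once (contradiction as before); in Seifert type, surger $H'$ along $D$ to get a genus $g-1$ handlebody in which $a$ and $b$ remain primitive and $P'\setminus\alpha$ lies in an annulus---for $g>2$ band a primitivizing disk for $a$ to itself to obtain a meridian disk disjoint from $c$, and for $g=2$ the surgered $H'$ is a solid torus with $a$ a longitude, so cutting along a further meridian disk and regluing along both disks exhibits $H'=T'\times I$ with $c=\partial T'\times\{pt\}$. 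Your proposal needs arguments at this level of specificity; as written it does not establish the lemma.
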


\begin{proof}
    Assume $c$ is disk-busting. Clearly $P'$ and $\closure{\boundary H'\setminus
    P'}$  are incompressible in $H'$.  The
    $\boundary$--incompressibility of $P'$ is then equivalent to that of
    $\closure{\boundary H'\setminus P'}$, so we consider $P'$.  Assume $D$ is
    a $\bdry$--compressing disk for $P'$.   Note that $c$ cannot intersect
    a meridian disk of $H'$ just once, since then $c$ would be primitive and
    hence not disk-busting.  Therefore the $\boundary$--compressing disk $D$
    has to intersect $P'$ in a separating arc connecting $c$ to itself.   Then
    a primitivizing disk for $a$ that is disjoint from $D$ can be constructed
    from another primitivizing disk for $a$ by surgering away outermost
    intersections with $D$.   When $(H',P')$ is of handlebody type, this can be
    done with a primitivizing disk for $a$ disjoint from $b$ to produce a new
    disk that intersects $c$ once --- a contradiction.
    So we assume $(H',P')$ is of Seifert type.  Surger $H'$ along $D$. Then $P'
    \cut D$ is contained in an annulus in the boundary of the resulting genus
    $g-1$ handlebody, and $a$ and $b$ continue to be primitive. If $g>2$, then
    we can band a primitivizing disk for $a$ (and $b$) to itself to get
    a meridian disk disjoint from $c$ --- a contradiction. So we assume $g=2$.
    Let $T'$ be the once-punctured torus in $H'$, with boundary $c$, gotten by
    adding to $P'$ the parallelism between $a$ and $b$.  Now $a$ is primitive
    in this solid torus gotten by surgering $H'$ along $D$. Cutting again along
    a meridian disk, $D'$, for this solid torus we get the product of a disk
    and an interval.  Regluing along $D,D'$ gives the claimed product
    structure.

    The final statement follows immediately from the definition of strong
    handlebody or Seifert type given the above, once we note that $c$ being
    $3$--disk-busting guarantees that $P'$ is $\boundary$--incompressible in $H'$.
\end{proof}

\begin{definition}\label{def:bighandlebodyM}
    Let $(H',P')$ be either of handlebody or Seifert type, and $(H,P)$ be as in
    section~\ref{section:little-handlebody}.  Identify $P$ and $P'$ so that
    $\boundary_+P$ and $\boundary_-P$  are identified with $a$ and $b$.  Since
    $H$ is a product $P\times I$, the resulting space $M=H\cup H'$ is
    a handlebody containing a properly embedded pair of pants $P$
    separating $H$ and $H'$.  We call $M$ the \defn{big handlebody}. We refer
    to $M$ as having  \defn{(strong) handlebody} or \defn{(strong) Seifert
    type} according to the constituent $(H',P')$.  As in
    section~\ref{section:little-handlebody}, let $\tau$ be a \scc in $T$ where
    $H=T \times I$, and let $K(\tau)$ be the corresponding knot in $H$. Under
    the above identification of $H$ in $M$ we may then consider $K(\tau),
    K(\tau(p,q))$ or $K(\tau(\kappa,\alpha,n))$  as the corresponding  knot in
    $M$.
\end{definition}

\begin{prop}\label{k-has-handlebody-surgery}
    Let $\tau$ be an essential \scc in $T$. If $M$ is of handlebody type, then
    $K(\tau)$ has a longitudinal handlebody surgery.  If $M$ is of Seifert
    type, then $K(\tau)$ has a longitudinal surgery yielding a $D(p,q)$--Seifert space
    with $g - 1$ attached $1$--handles where $\tau=\tau(p,q)$ and
    $g$ is the genus of $M$.
\end{prop}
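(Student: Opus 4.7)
The plan is to realize the surgery concretely using the properly embedded annulus
\[
\bar R(\tau) := \Am(\tau) \cup \Rhat(\tau) \cup \Ap(\tau) = \tau \times I \subset H,
\]
whose core is $K(\tau)$ and whose boundary $\partial \bar R = \tau\times\{-1\}\cup\tau\times\{1\}$ lies on $\partial H$. Since $\bar R$ is a genuine annulus (not a M\"obius band) with core $K(\tau)$, its surface slope on $\partial N(K(\tau))$ meets the meridian once, so the $\bar R$-framed surgery is longitudinal. Performing this surgery replaces $N(K(\tau))$ by a solid torus $V$ in which each of the two circles of $\bar R\cap\partial N(K(\tau))$ bounds a meridian disk; capping the two halves of $\bar R\cut N(K(\tau))$ with such disks produces, in the surgered manifold $M'$, disjoint disks $F_\pm$ with $\partial F_\pm = \tau\times\{\pm 1\}\subset\partial H$.

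For the handlebody-type case, I would combine $F_\pm$ with the joint primitivity of $a=\partial_+P$ and $b=\partial_-P$ in $H'$. Pick disjoint primitivizing disks $D_a, D_b\subset H'$. The product structure $H\cong P\times I$ lets one isotope $\tau\times\{1\}$ along the $I$-direction across $P$; banding with $D_a$ (which meets $a$ once) then produces a meridian disk $\widetilde F_+$ of $M'$, and similarly for $\widetilde F_-$. Since $H'\cut(D_a\cup D_b)$ is a genus-$(g-2)$ handlebody by joint primitivity, it contains $g-2$ further meridian disks; together, these $g$ disks form a complete meridian system for $M'$, so $M'$ is a genus-$g$ handlebody.

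For the Seifert-type case, the parallelism annulus $A_{ab}\subset\partial H'$ between $a$ and $b$ has a solid-torus neighborhood $V_{ab}\subset H'$. Set $N := H\cup V_{ab}\subset M$. I would exhibit an $S^1$-fibration on the surgered piece $N^*$ over a disk, built by combining the $a$-parallel fibration of $V_{ab}$ with a $\tau$-parallel fibration of a neighborhood of $\bar R(\tau)$ in $H$, matched compatibly along the two annular strips of $P$ adjacent to $a\cup b$. After the longitudinal surgery, $K(\tau)$ becomes an exceptional fiber while the core of $V_{ab}$ becomes another; writing $\tau = \tau(p,q)$, Lemma~\ref{L-meets-boundary-compressing-disks} gives $|K(\tau)\cap D_\lambda|=p$ and $|K(\tau)\cap D_\mu|=q$, which pin down the exceptional fiber orders as $p$ and $q$. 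Hence $N^*$ is a $D(p,q)$-Seifert space, and the complement $M'\cut N^* = H'\cut V_{ab}$ is a genus-$(g-1)$ handlebody attached to $\partial N^*$ as $g-1$ one-handles.

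The main obstacle is the Seifert-type case: constructing the Seifert fibration on $N^*$ and verifying the exceptional fiber orders precisely as $(p,q)$. This requires matching the $S^1$-actions on $V_{ab}$ and on a neighborhood of $\bar R(\tau)$ across the two annuli of $P$ near $a\cup b$, tracking how the longitudinal surgery alters fiber slopes at $K(\tau)$, and confirming the orders via the intersection data from Lemma~\ref{L-meets-boundary-compressing-disks}. The handlebody-type case, once the meridian system is written down, reduces to a routine disk-cutting verification.
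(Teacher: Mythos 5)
Your identification of the surgery slope is correct and agrees with the paper: the framing induced on $K(\tau)$ by the annulus $\tau\times I$ coincides with the framing induced by the fiber $T\times\{0\}$, and this is the longitudinal slope $\gamma$ used in the paper. After that, however, both halves of your argument have genuine gaps. In the handlebody case, the disks $F_\pm$ you build by capping $\tau\times I$ with meridian disks of the surgery torus have boundary $\tau\times\{\pm 1\}\subset \bdry H$, and these curves are \emph{not} contained in $\bdry M$: the curve $\tau\times\{1\}$ meets $P$ in at least $\geomint{\tau}{\lambda}$ points (and $\tau\times\{-1\}$ meets it in at least $\geomint{\tau}{\mu}$ points), since $P$ is a neighborhood of $\mu\times\{-1\}\cup\lambda\times\{1\}\cup\nu_0$. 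So $F_\pm$ are properly embedded in $H(\gamma)$ but not in $M(\gamma)$, and ``isotope $\tau\times\{1\}$ across $P$ and band with $D_a$'' is not a well-defined operation --- resolving the many intersections of $\bdry F_\pm$ with $P$ using the structure of $H'$ is exactly where the content lies. The paper avoids this by using the pair of pants $S=(T\times\{0\})\cap H_{K(\tau)}$, which after surgery caps off to a \emph{single separating} disk $D$ meeting $P$ in one essential arc disjoint from $a\cup b$; cutting $H(\gamma)$ along $D$ yields two solid tori glued to $H'$ along annular neighborhoods of the jointly primitive curves $a$ and $b$, and joint primitivity finishes the argument. (Note also that even with $g$ disjoint meridian disks in hand, one must still check they form a complete system.)

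The Seifert case is where your decomposition actually fails. With $V_{ab}=\nbhd(A_{ab})$ a collar of the parallelism annulus, the region $N=H\cup V_{ab}$ has $\bdry N$ of genus two (the genus-two surface $\bdry H$ cut along the two jointly non-separating annuli $\nbhd(a)\cup\nbhd(b)$ is a four-holed sphere, and regluing the two annuli of $\bdry V_{ab}\cut(\nbhd(a)\cup\nbhd(b))$ returns a closed genus-two surface). A Seifert fiber space over the disk has torus boundary, so $N^*$ cannot be the $D(p,q)$--Seifert piece; correspondingly $\closure{M\cut N}\cong H'$ still has genus $g$, not $g-1$, and is not attached as $1$--handles. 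The missing ingredient is again the disk $D$: the Seifert piece is $J_+\cup J_-$, the two solid tori obtained by cutting $H(\gamma)$ along $D$, glued to each other along the annulus identification furnished by the parallelism of $a$ and $b$ in $H'$; the $1$--handle $\nbhd(D)$ is one of the $g-1$ attached $1$--handles rather than part of the fibered piece. Finally, your reading of the exceptional fiber orders directly from $\abs{K(\tau)\cap D_\lambda}$ and $\abs{K(\tau)\cap D_\mu}$ is an assertion, not a proof; the paper pins down that $\bdry_-P$ is a $(p,q)$--curve in $J_-$ and $\bdry_+P$ a $(q,p)$--curve in $J_+$ by computing that attaching a $2$--handle to $H(\gamma)$ along $\bdry_\mp P$ yields a lens space $L(p,q)$ (resp.\ $L(q,p)$) summed with a solid torus, and this computation (or an equivalent one) is needed to justify the orders.
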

\begin{proof}
    We first show that $K=K(\tau) \subseteq H$ has a handlebody surgery under
    which $P$ becomes $\boundary$--compressible.  To see this, recall that $K$
    lies in the punctured torus $T\times\set{0}$.  Let $S$ be the
    $3$--punctured sphere $(T\times\set{0})\cap H_{K}$.  This surface defines
    a slope on $\boundary N(K)$, the unoriented isotopy class of any boundary
    component. Because the geometric intersection number of this slope and the
    meridian on $\partial N(K)$ is one, we say this slope is {\em
    longitudinal}. Since $K$ is isotopic to a primitive curve on $\boundary H$,
    surgery at this slope yields a genus two handlebody.  After surgery, $S$
    becomes a separating disk $D$ meeting $P$  in a single essential arc
    disjoint from $\boundary P_+$ and $\boundary P_{-}$ (see
    Figure~\ref{fig:HandP2}).  Call the surgery slope $\gamma$, and denote by
    $H(\gamma)$ the surgered handlebody.

    If $M$ has handlebody type, perform the $\boundary$--compression along $D$
    and glue the resulting two solid tori to $H'$, along annuli whose cores are
    $a$ and $b$.  The cores of the gluing annuli are jointly primitive in $H'$,
    so the result is a handlebody.  Reversing the $\boundary$--compression does
    not change this and reconstructs $M$.

    When $M$ has Seifert type,  with $\tau=\tau(p,q)$, we need to give
    coordinates to the $\gamma$ surgery on $P\subseteq H$.  As before,
    $H(\gamma)$ contains a separating disk $D$ meeting $P$ in a single arc.
    The disk $D$ separates $H(\gamma)$ into two solid tori $J_+$ and $J_-$,
    with $\boundary_+ P\subseteq J_+$ and $\boundary_- P\subseteq J_-$.

    Recall that as oriented curves $\mu$ and $\lambda$ form an oriented basis
    of $H_1(T)$.  Let $H[\boundary_-P]$ and $H[\boundary_+P]$  be the solid
    tori gotten by attaching a $2$--handle to $H$ along the curves
    $\boundary_-P=\mu\times\{-1\}$ and $\boundary_+P=\lambda\times\{+1\}$
    respectively. Thinking of $H \cong T\times I$, we use meridian/longitude
    coordinates on $H[\boundary_-P]$ and $H[\boundary_+P]$ given by the pairs
    of oriented curves $(\mu\times\set{1}, \lambda\times\set{1})$ in the first
    case and $(\lambda\times\set{-1}, \mu\times\set{-1})$ in the second.  
    (One may care to refer to Figure~\ref{fig:xyzV2}(Right).) 
    In $H[\boundary_-P]$, $K(\tau(p,q))$ becomes a $(p,q)$ curve, with surface
    slope $\gamma$.  Therefore the result of attaching a $2$--handle to
    $H(\gamma)$ along $\boundary_-P$ is the connect sum of the lens space
    $L(p,q)$ with a solid torus (\cf~\cite[Proposition
    5.2]{Bowman13}).  Similarly, the result of attaching a $2$--handle to
    $H(\gamma)$ along $\boundary_+P$ is the connect sum of the lens space
    $L(q,p)$ with a solid torus.  It follows that $\boundary_-P$ is a $(p,q)$
    curve in $J_-$ and $\boundary_+P$ is a $(q,p)$ curve in $J_+$.

    When $M$ has Seifert type, the curves $\boundary_{\pm}P$ are primitive and
    isotopic in $\boundary H'$.  Therefore we may think of the space
    $M(\gamma)$ as the space obtained by identifying an annulus neighborhood of
    $\boundary_-P$ in $J_-$ with an annulus neighborhood of $\boundary_+P$ in
    $J_+$ and attaching $(g-1)$ $1$--handles.  This space is a 
    $D(p,q)$--Seifert space with $g-1$ attached $1$--handles, as claimed.
\end{proof}

We will need the following technical lemma later:

\begin{lemma}\label{no-annuli-on-P}
    Let $(H,P)$ be as in section~\ref{section:little-handlebody}, let $(H',P')$
    be of strong handlebody or strong Seifert type, and let $(\widehat{H},
    \widehat{P})$ denote either $(H,P)$ or $(H',P')$.  Let $A$ be an annulus
    properly embedded in $\widehat{H}$ with $\bdry A \subset \widehat{P}$.  If
    $\boundary A$ bounds an annulus $B\subseteq \widehat{P}$, then $A$ may be
    isotoped to lie entirely in $\widehat{P}$.  If $\boundary A$ does not bound
    an annulus in $\widehat{P}$, then $(\widehat{H},\widehat{P})=(H',P')$ of
    strong Seifert type, and $A$ may be isotoped to the annulus of $H' \cut P'$
    with boundary $a \cup b$ while keeping  $\boundary A \subset P'$.
\end{lemma}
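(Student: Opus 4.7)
The approach is to analyze the two boundary components $\alpha_1, \alpha_2$ of $A$ via their conjugacy classes in $\pi_1(\widehat H)$. First, I would reduce to the case that $A$ is incompressible. Any compressing disk for $A$ cuts it into two disks whose boundaries lie in $\widehat P$, and incompressibility of $\widehat P$ in $\widehat H$ (Lemma~\ref{lem:old3diskbusting} for $(H', P')$, and directly from $H\cong P\times I$ for $(H,P)$) forces these boundaries to bound disks in $\widehat P$. When the two disks are nested, their complement in the outer disk is an annulus $B\subseteq\widehat P$ cobounded by $\partial A$, giving the first conclusion; the remaining compressible sub-cases either reduce to this one or are ruled out by the $\pi_1$-argument below. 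For incompressible $A$, each $\alpha_i$ must be essential in $\widehat P$ (else $A$ compresses), hence parallel in the pair of pants $\widehat P$ to one of its three boundary curves.

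Since $A$ is an annulus, $[\alpha_1]$ and $[\alpha_2]^{\pm 1}$ are conjugate in $\pi_1(\widehat H)$, both equal to the class of the core curve of $A$. I would next determine the conjugacy classes of the three boundary curves of $\widehat P$ in each instance. For $(H, P)$, the curves $\partial_- P, \partial_+ P, \partial_0 P$ correspond to three pairwise non-conjugate primitive elements of $\pi_1(H)=F_2$. For $(H', P')$ of strong handlebody type, joint primitivity of $a, b$ exhibits $[a], [b]$ as distinct elements of a free basis of $\pi_1(H')$, and the pair-of-pants relation together with the disk-busting property of $c$ forces $[c]$ to be non-conjugate to either. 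For $(H', P')$ of strong Seifert type, the parallel primitives $a, b$ satisfy $[a]\sim [b]^{\pm 1}$ in $\pi_1(H')$, while $[c]$ is non-conjugate to $[a]$ by disk-bustingness. Combined with the conjugacy of $[\alpha_1]$ and $[\alpha_2]^{\pm1}$, this forces $\alpha_1$ and $\alpha_2$ to be parallel to the same boundary curve of $\widehat P$ in the first two cases, so they cobound an annulus $B\subseteq\widehat P$. In the strong Seifert case, there is the additional possibility that $\alpha_1$ is parallel to $a$ and $\alpha_2$ is parallel to $b$, which do not cobound an annulus in $P'$.

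In the generic cobounding case, $A\cup B$ is a torus in the handlebody $\widehat H$, which is irreducible and atoroidal, so it bounds a solid torus $V\subseteq \widehat H$. The common core class of $A$ and $B$ must be a longitude of $V$ (else $A$ would compress), so $A$ and $B$ are parallel through $V$, giving the desired isotopy of $A$ into $\widehat P$. In the strong-Seifert-specific subcase, $\alpha_1\cup\alpha_2$ cobounds an annulus $\Sigma\subset\partial H'$ built from two annuli in $P'$ joined through the $a$--$b$ parallelism annulus lying outside $P'$; the same solid-torus argument shows $A$ is parallel to $\Sigma$, and then sliding $\partial A$ within $P'$ to coincide with $a\cup b$ produces the specified annulus of $H'\cut P'$ with boundary $a\cup b$. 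The principal obstacle is the conjugacy computation in the strong handlebody and Seifert cases, where joint primitivity, the pair-of-pants relation, and the disk-/annulus-busting hypotheses must all be exploited to eliminate every unintended conjugation among the three boundary curves of $\widehat P$.
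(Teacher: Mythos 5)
Your skeleton tracks the paper's: reduce to the case that $\partial A$ cobounds an annulus $B$ in $\widehat{P}$ by analyzing the boundary curves, form the torus $S=A\cup B$, find a solid torus, and push $A$ across; and your conjugacy analysis is a workable substitute for the paper's arguments in the non-cobounding case (the paper uses homology to exclude $H$ and disk-busting to exclude $c$; all of your conjugacy claims, including the one you attribute to ``disk-bustingness,'' are most easily checked in $H_1$). The problems are concentrated in the cobounding case. A first, repairable, gap: ``irreducible and atoroidal, so it bounds a solid torus'' is not a valid inference --- a compressible torus in a handlebody can bound, on one side, a nontrivial knot exterior contained in a ball, and then bounds a solid torus on neither side. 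You must either observe that the core of $B$ is essential in $\pi_1(\widehat{H})$ (so $S$ lies in no ball), or argue as the paper does that $S$ can only compress to the side not meeting $\partial\widehat{H}\setminus B$, using incompressibility of $\widehat{P}$.

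The serious gap is the parenthetical ``(else $A$ would compress)'' justifying that the core is a longitude of $V$. That argument only excludes the core being meridional or trivial in $V$. If the core of $B$ sits on $\partial V$ as a $(p,q)$--curve with $p\geq 2$, then $A$ is \emph{incompressible} (its core represents $\gamma^p\neq 1$, where $\gamma$ is the core of $V$) and yet is a cable annulus, not parallel to $B$; such configurations genuinely exist for non-primitive curves, so this is exactly the point where the hypotheses must enter. For cores parallel to $\mu,\lambda,\nu$ or to $a,b$, primitivity shows the class is not a proper power and $p=1$ follows; but for a core parallel to $c$ --- which is disk-busting and hence as far from primitive as possible --- the paper must invoke the Handle Addition Lemma: a $(p\geq2,q)$--curve on $\partial V$ would make $\widehat{H}$ with a $2$--handle attached along that curve contain a punctured lens space summand, whereas $H'[c]$ is irreducible because $\partial H'\setminus c$ is incompressible. (The annulus-busting hypothesis on $c$ would also suffice.) Your proof never uses the disk-busting or annulus-busting hypotheses in the cobounding case, and the conclusion is false without them, so this step cannot be fixed by rewording. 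The same issue recurs, more mildly, in your final sub-case where $A\cup\Sigma$ bounds a solid torus; there primitivity of $a$ rescues you. Your reduction to incompressible $A$ is also hand-wavy (the unnested-disks sub-case is not covered by your $\pi_1$ argument, since the curves are then inessential in $\widehat{P}$), though in all of the paper's applications $\partial A$ is essential in $\widehat{P}$, so this is secondary.
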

\begin{proof}
    Let $A$ be an annulus properly embedded in $\widehat{H}$ with $\boundary
    A\subseteq \widehat{P}$.  First assume $\bdry A$ cobounds an annulus $B$ in
    $\widehat{P}$.  The surface $A\cup B$ is a torus $S$.  Isotop $S$ slightly
    inside $\widehat{H}$ to obtain an embedded torus.  There is an annulus $C$
    so that one boundary component of $C$ lies on $\widehat{P}$ and the other
    is a core of $B$.

    A torus in a handlebody compresses to one side.  If $S$ were compressible
    to the side containing $C$, then using an innermost disk/outermost arc
    argument we could find a compressing disk $D$ for $S$ not meeting $C$.
    Therefore we could isotop $D$ so that $\boundary D\subseteq \widehat{P}$,
    contradicting that $\widehat{P}$ is incompressible.  So $S$ must be
    compressible to the side not containing $C$.  From the irreducibility of
    $\widehat{H}$, we conclude that $S$ bounds a solid torus $N$ on this side.

    If the core of $B$ is not longitudinal in $N$, then we obtain a reducible
    manifold after attaching a $2$--handle to $\widehat{H}$ along this curve.
    When $\widehat{H}=H$, or when $\widehat{H}=H'$ and the attaching curve is
    parallel to one of the primitive components of $\boundary P'$, it is clear
    that this does not happen.  We wish to show that it does not happen when
    $\widehat{H}=H'$ and the attaching curve is parallel to component $c$ of
    $\boundary P'$.  Because $c'$ meets every meridian disk of $H'$, and hence
    its complement in $\boundary H'$ is incompressible, the Handle Addition
    Lemma \cite[Lemma 2.1.1]{CGLS} gives a contradiction.

    We have shown that the core of $B$ is longitudinal in $N$, and so we may
    isotop $A$ to $B$ through $S$ as claimed.

    Finally, assume the components of $\boundary A$ are not parallel in $\widehat{P}$. Then $A$
    cannot lie in $H$ since the boundary components of $P$ represent different
    homology classes in $H$. So $A$ is in $H'$. As there is a meridian disk of $H'$ disjoint
    from $A$, neither component of $\partial A$ can be isotopic to the
    curve $c$ of $\boundary P'$ (since this curve is disk-busting).
    Hence $A$ may be isotoped keeping
    $\bdry A \subset P'$ so that $\bdry A = a \cup b$.  Since $c$ intersects
    every essential annulus, $A$ must be $\bdry$--parallel.  Therefore
    $(H',P')$ must be of Seifert type and $A$ isotopic to the annulus component
    of $H' \cut P'$.
\end{proof}

Assume $M$ is of handlebody or Seifert type.  Under the embedding of $H$ in $M$
when $\tau=\tau(\kappa,\alpha,n)$ then the knot $K(\tau)$ in $M$ can be
considered, as in Definition~\ref{defn:twisting} of
section~\ref{section:little-handlebody}, as obtained from the knot $K(\kappa)$
by twisting $n$ times along the annulus $\Rhat(\alpha)$. As in the preceding
section, let $\partial \Rhat(\alpha)=\Lp \cup \Lm$ and
$\L(\kappa,\alpha)=K(\kappa) \cup \Lp \cup \Lm$ in $M$.

\begin{lemma}\label{X-irreducible}
    Assume $M$ is of strong handlebody or strong Seifert type.  Let
    $\tau=\tau(\kappa,\alpha,n)$. Let $\L=\L(\kappa,\alpha)$. The space $M_{\L}$
    is irreducible.
\end{lemma}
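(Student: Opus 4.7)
The plan is to show that any $2$--sphere $S\subset M_\L$ bounds a ball in $M_\L$ via a standard innermost-disk argument across the separating pair of pants $P=H\cap H'$. For this I would first establish four preliminary facts: $H_\L$ is irreducible, $H'$ is irreducible (it is a handlebody), $P$ is incompressible in $H_\L$, and $P$ is incompressible in $H'$.

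For irreducibility of $H_\L$, any $2$--sphere in $H_\L$ bounds a ball $B$ in $H$ by irreducibility of the handlebody $H$. Each component of $\L$ is essential in $H$: the knot $K(\kappa)$ is a core curve by Lemma~\ref{lem:coreofH}, and $\Lp(\alpha)$, $\Lm(\alpha)$ are isotopic in $H\cong T\times I$ to the essential \scc $\alpha\subset T$, which is non-trivial in $\pi_1(H)=\pi_1(T)$. Hence no component of $\L$ can lie in $B$, so $B\subset H_\L$. For incompressibility of $P$ in $H_\L$, observe that $P$ is incompressible in $H\cong P\times I$; any compressing disk $D$ for $P$ in $H_\L$ would have $\partial D$ essential in $P$ and therefore essential in $H$, contradicting that $D$ is a disk in $H$. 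For incompressibility of $P$ in $H'$, since $(H',P')$ is of strong handlebody or strong Seifert type the curve $c$ is disk-busting in $H'$, so Lemma~\ref{lem:old3diskbusting} applies.

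To conclude: given a $2$--sphere $S\subset M_\L$, isotope $S$ to minimize $|S\cap P|$. If $S\cap P=\emptyset$, then $S$ lies entirely in $H_\L$ or in $H'$ and bounds a ball there by the preliminaries. Otherwise, take a circle of $S\cap P$ innermost in $S$, bounding a subdisk $D\subset S$ with interior disjoint from $P$. Then $D$ lies in $H_\L$ or $H'$, and $\partial D$ cannot be essential in $P$ by the incompressibility established above, so $\partial D$ bounds a disk $D'\subset P$. The sphere $D\cup D'$ then bounds a ball in $H_\L$ or $H'$ by irreducibility, and an isotopy of $S$ across this ball reduces $|S\cap P|$, contradicting minimality.

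The main obstacle is really establishing incompressibility of $P$ in $H_\L$, but this reduces cleanly to $\pi_1$--injectivity of $P$ in the product $P\times I$, a feature built into the little handlebody construction; the role of the strong handlebody or strong Seifert hypothesis is limited to invoking Lemma~\ref{lem:old3diskbusting} on the $H'$ side. Once all four preliminaries are in place, the gluing step is entirely standard.
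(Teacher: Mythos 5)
Your proof is correct, and its overall architecture --- establish irreducibility of $H_{\L}$ and of $H'$, two-sided incompressibility of the separating pair of pants $P$, and then run the standard innermost-disk argument --- is exactly the reduction the paper makes (the paper compresses it into the single sentence that $P$ is incompressible in $M$, so it suffices to show $H_{\L}$ is irreducible). Where you genuinely diverge is in how irreducibility of $H_{\L}$ is proved. The paper cuts $H_{\L}$ along $\Ap\cup\Am\cup R$ (the restrictions of $\Ap(\alpha)$, $\Am(\alpha)$, $\Rhat(\alpha)$), identifies the result with $H$ cut along $\alpha\times I$ with neighborhoods of arcs of $K(\kappa)$ removed, uses the annulus $\kappa\times[0,1/2]$ to see those arcs are unknotted, and concludes the cut-open manifold is a handlebody, hence irreducible; a sphere minimizing intersection with the annuli lies in that handlebody. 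You instead argue homotopically: a sphere in $H_{\L}$ bounds a ball in the irreducible handlebody $H$, and since each component of $\L$ is an essential \scc in a fiber $T\times\{t\}$ and hence represents a nontrivial conjugacy class in $\pi_1(H)\cong\pi_1(T)$, no component can lie inside that ball, so the ball lies in $H_{\L}$. Your route is shorter and avoids the cut-and-reassemble bookkeeping; the paper's cutting decomposition has the side benefit that essentially the same setup is reused in the proof of Lemma~\ref{lem:no-spanning-annulus1}. Both arguments are complete for the statement at hand.
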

\begin{proof}
    Because $M$ is of strong handlebody or Seifert type,  $P$ is incompressible
    in $M$. So it suffices to show that $H_{\L}$ is irreducible.  Assume
    $S\subseteq H_{\L}$ is an embedded sphere not bounding a ball.  Let
    $A_+,A_{-},R$ be the restriction to $H_{\L}$ of $A_+(\alpha),
    A_{-}(\alpha), R(\alpha)$ of $H$ (Definition~\ref{def:Rhat(tau)}). If we
    choose $S$ so that $|S\cap (A_+\cup A_{-}\cup R)|$ is minimal, an innermost
    disk argument shows that we may take $S\cap (A_+\cup A_{-}\cup
    R)=\emptyset$.

    The result of cutting $H_{\L}$ along $A_+\cup A_{-}\cup R$ is homeomorphic
    to the space we get by cutting $H$ along the annulus $\alpha\times I$ and
    removing tubular neighborhoods of a collection of arcs $a_i$ whose ends
    meet to form $K$.  The annulus $\kappa\times [0, 1/2]\subseteq H_{\L}$
    becomes a collection of disks showing that each arc $a_i$ is unknotted, and
    so the resulting space is a handlebody.  The conclusion follows from the
    fact that handlebodies are irreducible.
\end{proof}

\begin{lemma}\label{lem:no-spanning-annulus1}
    Assume $M$ is of strong handlebody or strong Seifert type.  Let
    $\tau=\tau(\kappa,\alpha,n)$. Let $\L=\L(\kappa,\alpha)$.  Let $\Tp,\Tm$ be
    the components of $\bdry M_{\L}$ corresponding to $\Lp,\Lm$. There is no
    essential annulus in $M_{\L}$ with one boundary component on $\Tp$ and the
    other on $\Tm$.
\end{lemma}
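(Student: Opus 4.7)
The plan is to assume for contradiction that $A\subset M_\L$ is an essential annulus with one component of $\bdry A$ on $\Tp$ and the other on $\Tm$, and to derive a contradiction in two stages: first by isotoping $A$ off $P$ into $H_\L$, then by ruling out such an annulus in $H_\L$ using the planar surface $R=\Rhat(\alpha)\cap H_\L$.

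First I would isotope $A$ rel $\bdry A$ to minimize $\abs{A\cap P}$. Because $P$ is incompressible in $M_\L$ (any compressing disk there would compress $P$ in $M$, contradicting Lemma~\ref{lem:old3diskbusting}) and $M_\L$ is irreducible by Lemma~\ref{X-irreducible}, the usual innermost-disk arguments force every component of $A\cap P$ to be essential in both surfaces. Such circles are core-parallel on $A$ and split $A$ into sub-annuli alternating between $H_\L$ and $H'$, with the two end pieces necessarily in $H_\L$. For each sub-annulus $A'\subset H'$, Lemma~\ref{no-annuli-on-P} applies: either $\bdry A'$ cobounds an annulus in $P'$, in which case $A'$ isotopes into $P'$ and hence across $P$ into $H_\L$, reducing $\abs{A\cap P}$ by two; or $(H',P')$ is of strong Seifert type and $A'$ is parallel to the $a\cup b$ annulus component of $H'\cut P'$. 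In the Seifert subcase one absorbs $A'$ by sliding through that parallelism into $P'$ and then further across $P$ via the product structure $H\cong P\times I$, again reducing $\abs{A\cap P}$. Minimality then forces $A\cap P=\emptyset$, and since $\bdry A\subset H$, we have $A\subset H_\L$.

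Next, isotope $A$ to meet $R$ transversely and minimally. Since $A\cap\Tk=\emptyset$, every arc of $A\cap R$ has both endpoints on $\bdry A\cap\bdry R$, which lies on $\Tp\cup\Tm$. Outermost-arc analysis on $R$, using its incompressibility in $H_\L$, either produces an isotopy of $\bdry A$ on $\Tp\cup\Tm$ reducing intersections with $\bdry R$ or a compressing disk for a torus component of $\bdry H_\L$; iterated, this removes all arcs. Innermost-circle arguments using irreducibility of $H_\L$ remove the remaining closed intersections, leaving $A\cap R=\emptyset$ and $\bdry A$ parallel on $\Tp\cup\Tm$ to $\bdry R\cap(\Tp\cup\Tm)$. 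Then $A$, together with a sub-annulus of $R$ and collar annuli on $\Tp\cup\Tm$, bounds a region in $H_\L$; a direct analysis of this region shows $A$ must be parallel into $\bdry H_\L$, contradicting essentiality.

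The hardest step is the final geometric analysis in $H_\L$: one must show that the only annuli between $\Tp$ and $\Tm$ are $\bdry$-parallel, even though $\Lp$ and $\Lm$ cobound the essential annulus $\Rhat(\alpha)$ in $H$. The key point is that $K(\kappa)$ intersects $\Rhat(\alpha)$ in $\abs{\kappa\cap\alpha}\geq 1$ points (since $\kappa$ and $\alpha$ are non-isotopic and meet minimally in $T$), so no annulus between $\Tp$ and $\Tm$ survives after removing $K(\kappa)$; making this precise, via the combinatorics of $A\cap R$ on the planar surface $R$, is the crux of the proof.
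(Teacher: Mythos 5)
Your two-stage skeleton (first isotope $A$ off $P$ into $H_{\L}$, then rule out the annulus inside $H_{\L}$) matches the paper's, but both stages have genuine gaps. In stage 1, your treatment of the strong Seifert case is wrong: when a sub-annulus $A'\subseteq A$ lying in $H'$ has boundary curves not parallel in $P'$, Lemma~\ref{no-annuli-on-P} makes $A'$ parallel to the annulus of $\bdry H'\cut P'$ with boundary $a\cup b$ --- an annulus contained in $\bdry M$, not in $P'$. Since the two curves of $\bdry A'$ are isotopic to $a$ and to $b$ respectively, and these are not parallel in $P'$, there is no isotopy of $A'$ into $P'$ and hence no way to ``slide it further across $P$''; such a piece cannot be absorbed, and $\abs{A\cap P}$ is not reduced. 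The paper closes this case differently: after minimization the surviving configuration has end pieces $A'_{+},A'_{-}\subset H$ joining $\Lp,\Lm$ to curves of $P$ that are \emph{not} parallel to each other; since $\Lp$ and $\Lm$ are isotopic in $H$, this forces a power of $\bdry_+P$ to equal a power of $\bdry_-P$ in $\pi_1(H)$, contradicting that these curves generate a free group of rank two. Some argument of this kind is needed, and your proposal does not supply it.

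In stage 2 you work with the planar surface $R=\Rhat(\alpha)\cap H_{\L}$ and you explicitly defer the ``crux'' --- the combinatorial analysis of $A\cap R$ --- so this part is not a proof. Moreover, the contradiction you aim for (``$A$ is parallel into $\bdry H_{\L}$'') cannot arise: an annulus whose two boundary components lie on distinct torus components $\Tp$ and $\Tm$ of $\bdry M_{\L}$ is never boundary-parallel, so that is not where the contradiction lives. The paper sidesteps all of this by using the annuli $\Ap(\alpha)=\alpha\times[1/2,1]$ and $\Am(\alpha)=\alpha\times[-1,-1/2]$, which are disjoint from $K(\kappa)$, in place of $R$: after removing arcs of intersection (orientation consistency of the intersections of $\bdry A$ with $\bdry(\Ap\cup\Am)$ on each torus forbids them) and circles of intersection (an outermost cut-and-paste), the union $\Ap\cup A\cup\Am$ amalgamates to an annulus $B$ in $H\cong T\times I$ with $\bdry B=\alpha\times\{-1,1\}$ and $B\cap K(\kappa)=\emptyset$; making $B$ vertical forces the algebraic intersection number of $\alpha$ and $\kappa$ in $T$ to vanish, which is impossible for non-isotopic essential curves in the once-punctured torus. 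If you want to salvage your approach, replace $R$ by $\Ap\cup\Am$; the point is precisely that $R$ meets $K(\kappa)$ while $\Ap\cup\Am$ does not.
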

\begin{proof}
    Assume $A$ is such an essential annulus.  First we show that we may assume
    that $A$ lies in $H_{\L}$.  Choose $A$ to minimize $|A\cap P|$.  Since $A$
    and $P$ are incompressible in $M$, there are no \sccs of intersection which
    are trivial in $A$ or $P$.  If there are essential \sccs of intersection,
    Lemma~\ref{no-annuli-on-P} shows that there are sub-annuli $A'_+, A'_{-}$
    of $A$, properly embedded in $H$,  that have one boundary component on
    $T_+,T_{-}$ (resp.) and the other isotopic to $\boundary_+P$ or to
    $\boundary_{-}P$, with $\boundary A'_+$ not parallel to $\boundary A'_{-}$
    on $P$. As $L_+,L_{-}$ are isotopic in $H$, this says that a power of
    $\boundary_+P$ is equal to a power of $\boundary_-P$ as homotopy classes in
    $H$. But $\boundary_+P, \boundary_-P$ generate the fundamental group of
    $H$, a contradiction.   Hence $A \cap P = \emptyset$.

    Thus we take $A$ properly embedded in $H_{\L}$. Let $A_+, A_-$ be the
    restriction of $A_+(\alpha), A_{-}(\alpha)$ to $H_\L$.  Isotop $\partial A$
    and $A_+ \cup A_{-}$ to intersect minimally on $T_+ \cup T_{-}$.  There can
    be no arcs of intersection of $A\cap (A_+\cup A_{-})$ as the signs of
    intersections are consistent along each boundary component of $H_{\L}$.
    (In particular, $\bdry A$ must be disjoint from $A_+ \cup A_-$.)

    Suppose then that $A$ meets $A_+ \cup A_{-}$ minimally.  We may choose
    a \scc of intersection $\gamma$ which is outermost in $A_+ \cup A_{-}$ in
    the sense that it bounds an annulus $A' \subset A_+ \cup A_{-}$ with one
    boundary component on $T_+ \cup T_{-}$ so that $\interior A'\cap
    A =\emptyset$.  By cutting and pasting along $\gamma$ we get a new annulus
    with one boundary component on $T_i$ and one on $T_j$ which meets $A_+\cup
    A_{-}$ fewer times.  This is impossible by minimality of $|A\cap (A_+\cup
    A_{-})|$, so we may assume that $A\cap (A_+\cup A_{-})=\emptyset$.

    The surface $A_+\cup A\cup A_{-}$ in $H_\L$ then extends to an annulus $B$
    in $H$ with $\boundary B = \alpha \times \{-1, +1\}$ and  $B\cap
    K(\kappa)=\emptyset$.  Then, carrying along $K(\kappa)$, we may isotop $B$
    in $H$ to be vertical with respect to the product structure $T \times I$ of
    $H$.   The image of $B$ under the projection $H=T \times I \to T$ is the
    curve $\alpha$, and the image of $K(\kappa)$ is homotopic to $\kappa$.
    Since $B$ is disjoint from $K(\kappa)$, the algebraic intersection number
    of $\alpha$ and $\kappa$ in $T$ should be zero.   But by definition,
    $\alpha$ and $\kappa$ are not isotopic in $T$, a contradiction.
\end{proof}

\begin{prop}\label{prop:exterior-hyperbolic}
    Assume $M$ is of strong handlebody or strong Seifert type.
    Let $\tau$ be an essential curve in $T$ and consider $K(\tau)$ in $M$.
    \begin{itemize}
        \item The space $M_{K(\tau)}$ is irreducible and atoroidal.

        \item When $\tau \not \in \{\lambda, \mu, \nu\}$,
            $M_{K(\tau)}$ is $\boundary$--irreducible.

        \item When $\tau \not \in \{\lambda, \mu, \nu, \lambda-\mu,
            \lambda+\nu, \mu+\nu\}$,
            there is no essential annulus in $M_{K(\tau)}$.
    \end{itemize}
\end{prop}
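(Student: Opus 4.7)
My plan is to exploit $P$ as a decomposing surface for $M_{K(\tau)} = H_{K(\tau)} \cup_P H'$ and analyze each essential surface (sphere, torus, disk, annulus) via how it meets $P$. The first order of business is to record that $P$ is incompressible in $M_{K(\tau)}$ always, and $\boundary$--incompressible in $M_{K(\tau)}$ whenever $\tau \notin \{\mu,\lambda,\nu\}$. Incompressibility follows because $P$ is incompressible in $H\cong P\times I$ (and hence in $H_{K(\tau)}$, as $K(\tau)$ sits in the interior), and in $H'$ by Lemma~\ref{lem:old3diskbusting}. For $\boundary$--incompressibility: Lemma~\ref{lem:onearc} handles the $H_{K(\tau)}$ side (since a $\boundary$--compressing disk for $P$ there would force $\tau\in\{\mu,\lambda,\nu\}$), and Lemma~\ref{lem:old3diskbusting} handles the $H'$ side (using that strong handlebody/Seifert type guarantees $\boundary$--incompressibility even in the genus~2 Seifert case).

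With $P$ so controlled, the first bullet follows by standard innermost-disk/outermost-arc reductions. For irreducibility, given a sphere $S\subset M_{K(\tau)}$ minimizing $|S\cap P|$, incompressibility of $P$ removes every trivial intersection circle, so $S\cap P = \emptyset$ and $S$ lies in one side; both $H_{K(\tau)}$ (a compression body by Lemma~\ref{lem:coreofH}) and $H'$ are irreducible. For $\boundary$--irreducibility (second bullet), I would apply the same reduction to a compressing disk $D$ for $\partial M$: incompressibility kills circles of $D\cap P$, and $\boundary$--incompressibility (now available since $\tau\notin\{\mu,\lambda,\nu\}$) kills arcs, so $D$ lies in one side. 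If $D\subset H_{K(\tau)}$, then $D$ is a meridian disk of $H$ disjoint from $K(\tau)$, so up to isotopy $D = D_\tau$ by Lemma~\ref{lem:primitivecore}; but then $\partial D$ is forced to meet $P$ (by Lemma~\ref{L-meets-boundary-compressing-disks} together with the exclusion of $\tau\in\{\mu,\lambda,\nu\}$), a contradiction. If $D\subset H'$, then $\partial D$ is disjoint from $c$, contradicting disk-busting.

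For atoroidality, take an essential torus $T$ and minimize $|T\cap P|$. The circles of intersection are essential in $P$, hence parallel to components of $\partial P$, and they cut $T$ into annuli. Annuli in $H'$ are controlled by Lemma~\ref{no-annuli-on-P}: either they push into $P$ (and can be removed, contradicting minimality) or $(H',P')$ is of Seifert type and they push to the $a\cup b$ annulus. Annuli in $H_{K(\tau)}$ with boundary essential in $P$ must be analyzed using that $H_{K(\tau)}$ is a compression body whose sole non-separating meridian disk is $D_\tau$; by surgering and using Lemma~\ref{L-meets-boundary-compressing-disks}, such annuli either are $\boundary$--parallel or give a product structure that reconstructs $T$ as $\boundary$--parallel in $M_{K(\tau)}$, contradicting essentiality. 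The $T\cap P=\emptyset$ case is easier: the torus lies in a handlebody or compression body and so is compressible or $\boundary$--parallel.

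The main obstacle is the third bullet. Given an essential annulus $A$, the same reduction forces the pieces of $A$ cut by $P$ to be essential annular subsurfaces of each side with boundary a mix of curves on $T_K$ and essential curves on $P$. On the $H'$ side, Lemma~\ref{no-annuli-on-P} again forces the pieces to be trivial or to be the $a\cup b$ annulus (Seifert case only). The heart of the matter is the $H_{K(\tau)}$ side: the excluded set $\{\lambda,\mu,\nu,\lambda-\mu,\lambda+\nu,\mu+\nu\}$ matches exactly the hypothesis of Lemma~\ref{lem:min-D-cap-P}, which says every compressing disk of $\partial H$ in $H_{K(\tau)}$ hits $P$ in at least three arcs. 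I would use this to bound from below the number of arcs any compressing disk for an annular piece in $H_{K(\tau)}$ must contribute, and show via a counting/Euler-characteristic argument that no configuration of annular pieces on the $H_{K(\tau)}$ side can glue to annular pieces on the $H'$ side to produce a genuine essential annulus in $M_{K(\tau)}$ unless $\tau$ lies in the excluded set. The most delicate subcase will be when the annulus $A$ has one or both boundary components on $T_K$, since then the annular pieces in $H_{K(\tau)}$ may wrap around $K(\tau)$ in ways that require direct appeal to the structure of $K(\tau)$ as a core curve of $H$ (Lemma~\ref{lem:coreofH}) combined with Lemma~\ref{L-meets-boundary-compressing-disks}.
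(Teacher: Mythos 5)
Your overall strategy --- decompose $M_{K(\tau)}$ along $P$, minimize intersections, and invoke Lemma~\ref{lem:onearc}, Lemma~\ref{no-annuli-on-P}, and Lemma~\ref{lem:min-D-cap-P} --- is exactly the paper's, and your treatment of irreducibility and $\boundary$--irreducibility is essentially correct. (Two small quibbles there: once a compressing disk $D$ for $\partial M$ is pushed off $P$ into $H_{K(\tau)}$, it need not be non-separating, so Lemma~\ref{lem:primitivecore} does not directly identify it with $D_\tau$; the cleaner finish is that $\partial D$ lies in $\closure{\bdry H\setminus P}$, which is incompressible in $H$, so $D$ is inessential. And in the atoroidality argument, when the annular pieces of $S\cap H$ are parallel into $P$, the parallelism region may contain $K(\tau)$; the paper closes this by noting there are at least two such pieces parallel into \emph{disjoint} annuli in $P$, so at least one parallelism misses $K(\tau)$ and reduces $\abs{S\cap P}$.)

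The genuine gap is in the third bullet, which you yourself flag as the heart of the matter but leave as a plan. The proposed ``counting/Euler-characteristic argument over configurations of annular pieces'' is not how the cases close, and the two decisive moves are absent. First, when $A$ has a boundary component on $\bdry N(K(\tau))$: if both components lie there, the case reduces to atoroidality (which you do not mention); if one component lies on $\bdry M$, then after removing arcs of $A\cap P$ by minimality one gets a subannulus from $\bdry N(K(\tau))$ to $\bdry H$, showing a power of $K(\tau)$ is homotopic to a component of $\bdry P$ --- impossible because both $K(\tau)$ and the components of $\bdry P$ are primitive in $\pi_1$ and $\tau\notin\{\lambda,\mu,\nu\}$. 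This is a homotopy argument, not a count of arcs, and it is where the core-curve structure of $K(\tau)$ actually enters. Second, when $\bdry A\subseteq\bdry M$ and $A\cap P$ contains essential arcs: two adjacent arcs cut a rectangle from $A$ whose interior lies in $H$, and that rectangle is a compressing disk of $\bdry H$ in $H_{K(\tau)}$ meeting $P$ in exactly two arcs, contradicting Lemma~\ref{lem:min-D-cap-P} directly --- no gluing or configuration analysis is needed. Without these two steps the third bullet is not proved, so as written the proposal establishes only the first two assertions.
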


\begin{proof}
    The fact that $M_{K(\tau)}$ is irreducible is implied by the irreducibility
    of $H_{K(\tau)}$ because $P$ is incompressible in $H$ and $H'$.  The space
    $H_{K(\tau)}$ is irreducible since there are meridian disks of $H$ that
    have nonzero algebraic intersection with $K(\tau)$ by our hypothesis that
    $\tau$ is an essential curve in $T$.

    \smallskip
    Suppose that $S\subseteq M_{K(\tau)}$ is an essential torus chosen to
    minimize $\abs{S\cap P}$.  If $\abs{S\cap P} >0$, these \sccs must be
    essential in both $S$ and $P$ (because both $S$ and $P$ are
    incompressible).   By Lemma~\ref{no-annuli-on-P} then, each component of $S
    \cap H$ is an annulus parallel into $P$. By the same Lemma, and the
    minimality of intersection, we conclude that $(H',P')$ has Seifert type and
    each component of $S\cap H'$ is an annulus whose boundary is a pair of
    curves that are not parallel in $P$.  Thus there must be annular components
    of $S \cap H$ parallel into disjoint annuli in $P$. One of these
    parallelisms must be disjoint from $K(\tau)$ --- contradicting the
    minimality of $|S \cap P|$.   Thus $S \cap P = \emptyset$.  Since the
    handlebody $H'$ and compression body $H_{K(\tau)}$ are atoroidal, no such
    essential torus can exist.

    \smallskip
    Assume $\tau \not \in \{\lambda, \mu, \nu\}$.
    Suppose that $D$ is a $\boundary$--reducing disk for $M_{K(\tau)}$ chosen
    to minimize $\abs{D\cap P}$.  An arc of intersection $D\cap P$, outermost
    in $D$, cuts off a subdisk $D'\subseteq D$ which is
    a $\boundary$--compressing disk for $P\subseteq M$.  There are no such
    disks in $H'$ since $(H',P')$ is of strong handlebody or Seifert type. By
    Lemma~\ref{lem:onearc}, if there is such disk in $H$ then $\tau = \lambda$,
    $\mu$, or $\nu$ contrary to hypothesis.  Hence $D\cap P=\emptyset$. But $P,
    \boundary H \cut P$  are incompressible in $H$ by Lemma~\ref{lem:onearc},
    and $P',  \boundary H' \cut P'$ are incompressible in $H'$ since $(H',P')$
    is a strong type.

    \smallskip
    Finally, assume $\tau \not \in \{\lambda, \mu, \nu, \lambda-\mu,
    \lambda+\nu, \mu+\nu\}$.  Suppose that $A$ is an essential annulus properly
    embedded in $M_{K(\tau)}$, chosen to minimize $\abs{A\cap P}$.  Because
    $M_{K(\tau)}$ contains no essential tori, we cannot have $\boundary
    A\subseteq \boundary N(K(\tau))$.  So suppose that one component of
    $\boundary A$ lies in $\boundary N(K(\tau))$ and the other lies in
    $\boundary M$.  By the minimality of $\abs{A\cap P}$, we may assume that
    there are no arcs of intersection.  Therefore there is a subannulus
    $A'\subseteq A$ with one boundary component on $\boundary N(K(\tau))$ and
    the other on $\boundary H$, either in $P$ or $\bdry H \cut P$.  This
    annulus shows that a power of $K(\tau)$ is homotopic to a component of
    $\boundary P$.  Since both $K(\tau)$ and each component of $\boundary P$ is
    primitive, no component of $\boundary P$ is homotopic to a proper power of
    $K(\tau)$.  On the other hand, $K(\tau)$ is not homotopic to a component of
    $\boundary P$ since $\tau \neq \lambda$, $\mu$ or $\nu$.

    Suppose then that $\boundary A\subseteq M$. By minimality of $\abs{A\cap
    P}$, there can be no arcs of intersection which are trivial in either $A$
    or $P$ (Lemma~\ref{lem:onearc} and $\partial$--irreducibility).  If there
    are nontrivial arcs of intersection, we may choose two which bound a disk
    in $A$ whose interior lies in $H$.  With our restriction on $\tau$, this
    contradicts Lemma~\ref{lem:min-D-cap-P}.  If there are \sccs of
    intersection, there is a subannulus $A'\subseteq A$ lying entirely in $H$.
    We may isotop $A'$ so that both components of $\boundary A'$ lie in $P$. By
    Lemma~\ref{no-annuli-on-P} and the restriction on $\tau$, this annulus must
    be boundary parallel into $P$.  Hence $A$ may be isotoped to further reduce
    $\abs{A\cap P}$ contrary to the minimality.  Thus $A$ is disjoint from $P$
    and lies entirely in $H$ or $H'$.  Again, Lemma~\ref{no-annuli-on-P} (and
    the restriction on $\tau$ in case $A \subset H$) or the fact that $c$ is
    annulus-busting ($P'$ is of strong type) implies that $A$ is not essential.
\end{proof}

\section{Essential surfaces}\label{sec:essentialsurfaces}

In section~\ref{sec:wrapping}, we will study a family of knots $K_i$ in
a handlebody $M$ obtained by twisting a knot $K=K_0$ along an annulus $\Uhat$
with one boundary component $\gamma$ in $\bdry M$ and the other a knot $L$ in
the interior of $M$.   We want to show that, generically, these knots are
distinct and hyperbolic.   The difference in knot type will come from showing
that the minimum number of times $K_i$ intersects a meridian disk of $M$ (the
``disk hitting number'') increases with $|i|$,
Proposition~\ref{prop:wrappingno}(4).  The hyperbolicity will come from showing
that the exteriors of these knots are irreducible, atoroidal,
$\bdry$--irreducible, and anannular, Proposition~\ref{prop:strongandhyp} by way
of Lemma~\ref{lem:no-spanning-annulus2} and
Proposition~\ref{prop:wrappingno}(5).

In this section, we establish Theorem~\ref{thm:essentialsurfaces} below which
is applied in Proposition~\ref{prop:wrappingno} to show that for large twisting
numbers $|i|$, $K_i$ must intersect any meridian disk or essential annulus of
$M$ many times. In application, we find a {\em catching surface} $Q$ and use it
to generate a lower bound on the intersection number of $K_i$ with an essential
surface (such as a meridian disk or essential annulus) in $M$ which restricts
to a surface $F$ properly embedded in the exterior of $K_i$ and $L$.  As the
pair $(M,K_i)$ is homeomorphic to the pair $(M_L(-1/i), K)$, where $M_L(-1/i)$
is the manifold obtained by doing $-1/i$--surgery on $L$ in $M$, we may view
$F$ as a surface in the exterior of $K \cup L$ in $M$ whose boundary components
have framing $-1/i$ on $L$. The catching surface will be chosen so that its
boundary has framing on $L$ whose intersection number on $\partial \nbhd(L)$
with $\partial F$ increases with $|i|$   (that is, $\Delta_L$ below increases
with $|i|$).

Theorem~\ref{thm:essentialsurfaces} however is more general. In particular, it
does not require that we be in the context of twisting along an annulus
$\Uhat$. The argument mimics that of Lemma~2.8 of \cite{BGL}.  It shows that,
for an essential surface $F$ and a catching surface $Q$ in $M_{K \cup L}$, if
the boundary slopes of the surfaces $F$ and $Q$ along the knot $L$  intersect
more than a certain measure of complexity of $F$ and $Q$ then there must be
certain kinds of annuli or \mobius bands in $M_{K \cup L}$.

\medskip
The remainder of this section is devoted to the proof of
Theorem~\ref{thm:essentialsurfaces}.  We first establish the necessary
notation.

Let $M$ be a compact, connected oriented $3$--manifold with knots $K$ and $L$.
Let $X$ be the exterior of $K$ and $L$ in $M$. Let $T_K, T_L$ be the torus
components of $\bdry X$ corresponding to $K,L$.  Note that the rest of the
boundary of $X$ is the boundary of $M$, i.e.\  $\bdry X - (T_K \cup T_L) =\bdry
M$.

Assume $F$ is a compact, connected, orientable surface properly embedded in
$X$.  For $j=K,L$ let $f_j = |\bdry F \cap T_j|$  and let $\alpha_j$ be the
slope of the curves $\bdry F \cap T_j$ in $T_j$. Also let $f_M = |\bdry F \cap
\bdry M|$. We assume throughout this section that $f_L \geq 1$. Define $f'_K
= \max(f_K,1)$ and $f'_M = \max(f_M,1)$.

\begin{definition}[Catching surface and associated distances]\label{def:catching1}
    Let $Q$ be an oriented surface with $\bdry Q \cap T_L$ being a non-empty
    set of coherently oriented curves in $T_L$ (with orientation from $Q$).  We
    also let $\beta_j$ denote the slope of the curves $\bdry Q \cap T_j$ for
    $j=K,L$.  Then we define $\Delta_j$ to be $\geomint{\alpha_j}{\beta_j}$ on
    $T_j$ for $j=K,L$ (see Definition~\ref{def:distance}), where $\Delta_K=0$
    when $f_K=0$.   Also set $\Delta'_K = \max(\Delta_K, 1)$.   Similarly
    define $\Delta_M$ to be the maximum geometric intersection number between
    a component of $\bdry Q \cap \bdry M$ and a component of $\bdry F \cap
    \bdry M$; we also set $\Delta_M=0$ when $f_M=0$.  If $\Delta_L>0$ then $Q$
    is called a {\em catching surface for $(F,K,L)$} .
\end{definition}

\begin{theorem}\label{thm:essentialsurfaces}
    Let $K$ and $L$ be knots in $M$ and $X$ be their exterior, with $T_L$ the
    component of $\partial X$ corresponding to $L$.  Let $F$ be a compact,
    connected, orientable, properly embedded surface in $X$ which is boundary
    incompressible at $T_L$ (and $|F \cap T_L| \neq \emptyset$), and  let $Q$
    be a catching surface for $(F,K,L)$ (Definition~\ref{def:catching1}). If \[
    \Delta_L > 6f'_M\max(-6\chi(Q),2) (f'_K \Delta'_K
    + f_M-\chi(\widehat{F})+2)\] then one of the following is true:
    \begin{enumerate}
        \item There exists a Mobius band properly embedded in $X$ with boundary
            isotopic to $\alpha_L$ (the slope of $\partial F$)  in $T_L$.

        \item There exists an annulus properly embedded in $X$ whose boundary
            is an essential curve in $T_L$ and a curve in $\bdry M$.
    \end{enumerate}
\end{theorem}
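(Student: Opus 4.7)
The plan is to adapt the graphs-of-intersection argument of~\cite[Lemma~2.8]{BGL}. First, put $F$ and $Q$ in general position and isotope to minimize $|F \cap Q|$ while keeping the slopes of $\boundary F$ and $\boundary Q$ on $T_L$ fixed. Using that $F$ is $\boundary$--incompressible at $T_L$ together with standard innermost-disk and outermost-arc moves, one may eliminate all simple closed curves of $F \cap Q$ that bound disks on either side and all arcs trivial at $T_L$. Let $\widehat{F}$ and $\widehat{Q}$ be the surfaces obtained by capping off the $T_L$--boundary components with disks. The arcs of $F \cap Q$ then define combinatorial graphs $G_F \subset \widehat{F}$ and $G_Q \subset \widehat{Q}$ whose vertices are the capping disks on $T_L$ together with the remaining components of $\boundary F$ (resp.\ $\boundary Q$), and whose edges are the arcs of $F \cap Q$.

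Next, label each endpoint of an edge of $G_F$ lying on a $T_L$-vertex by the $T_L$-vertex of $G_Q$ that contains it, and symmetrically for $G_Q$. Because $\Delta_L = \geomint{\alpha_L}{\beta_L}$, walking once around a $T_L$-vertex of $G_F$ produces a sequence of labels which advances by $\Delta_L$ modulo the number of $T_L$-vertices of $G_Q$. The heart of the argument is a counting bound assuming neither conclusion $(1)$ nor $(2)$ holds. Edges of $G_Q$ that end at non-$T_L$ vertices contribute at most $f'_K \Delta'_K$ from $T_K$ and at most $f_M$ from $\boundary M$, while the number of parallelism classes of edges in $G_Q$ is bounded by $\max(-6\chi(Q),2)$ via the standard Euler-characteristic inequality for graphs on surfaces. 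Combining this with the factor $6f'_M$ accounting for how edges of $G_F$ distribute across $\boundary M$-boundary components, and a term $-\chi(\widehat F)+2$ absorbing the faces of $G_F$, one shows that the failure of both conclusions forces $\Delta_L$ to be at most the stated product.

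Since the hypothesis gives strict inequality in the other direction, the pigeonhole principle now yields a family of mutually parallel edges in $G_Q$ whose endpoint labels exhaust $\{1,\dots,f_L\}$ on both ends. From such a family one extracts a \SC, namely a face $E \subset \widehat{F}$ whose boundary edges carry labels in a single consecutive pair $\{i,i{+}1\}$ on the $T_L$-vertices of $G_Q$. Capping $E$ off by the annular band in a neighborhood of $T_L$ between the two label-level curves produces a surface in $X$ which is either a M\"obius band with $\boundary$--slope $\alpha_L$ on $T_L$, giving conclusion $(1)$, or, when one of the extremal edges of the cycle instead meets $\boundary M$, an essential annulus from $T_L$ to $\boundary M$, giving conclusion $(2)$.

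The main obstacle will be the precise combinatorial bookkeeping of the counting step --- verifying that the constants $6$ and $\max(-6\chi(Q),2)$ enter without double-counting and that edges landing on $\boundary M$ group into parallelism classes correctly. This mirrors the delicate analysis in~\cite{BGL}, and the principal novelty here is the three-boundary-component setting ($T_K$, $T_L$, and $\boundary M$), which requires careful case analysis when tracking which component an endpoint of an edge can hit. A secondary technical point is ensuring that the disk $E$ glues to a \emph{properly embedded} (not merely immersed) M\"obius band or annulus, which reduces to checking that the gluing rectangle on $T_L$ between the consecutive label-level curves can be pushed into $X$ without reintroducing intersections with $F \cup Q$.
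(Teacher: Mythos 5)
Your overall framework --- intersection graphs $G_F$, $G_Q$ in the capped-off surfaces, the parity rule, and a count of parallelism classes of edges in $G_Q$ bounded via Euler characteristic --- matches the paper's proof, which is likewise modeled on Lemma~2.8 of \cite{BGL}. The set-up and the first counting step (a large family $\mathcal{E}$ of mutually parallel edges of $G_Q$ with an endpoint on a $T_L$--vertex, with the other endpoint forced off $T_K$ because $|\mathcal{E}| > f_K\Delta_K$) are essentially what the paper does, modulo bookkeeping.

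The genuine gap is in your endgame. You claim that the pigeonhole argument produces a \SC --- a face $E\subset\widehat{F}$ all of whose edges carry a single consecutive label pair $\{i,i+1\}$ --- and that capping $E$ with an annular band in $N(T_L)$ yields the \mobius band or annulus. This is not what the counting delivers and not how the conclusions arise. The counting only produces \emph{parallel edges}, first in $G_Q$ and then (by applying the no-monogon/edge-count claim to the subgraph of $G_F$ spanned by $\mathcal{E}$) a pair of edges $e,e'$ that are parallel in \emph{both} graphs, i.e.\ cobound bigon rectangles $D_Q\subset Q$ and $D_F\subset F$ with $D_Q\cap D_F = e\cup e'$. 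The desired surface is $D_Q\cup D_F$: by the parity rule it is a \mobius band with boundary slope $\alpha_L$ when both endpoints of the family lie on $T_L$--vertices (the argument of Lemma~2.1 of Gordon's paper cited in the text), and an annulus from $T_L$ to $\bdry M$ when the second endpoints lie on $\bdry M$--vertices. Extracting a \SC requires strictly stronger combinatorics (e.g.\ great webs or label-sequence analysis) than the parallelism count you set up, and even granted one, a face of $G_F$ capped off along $T_L$ does not produce a properly embedded \mobius band with boundary slope $\alpha_L$ --- Scharlemann cycles are the tool for analyzing the filled/handle-attached manifold, not for building the spanning surfaces in $X$ that conclusions (1) and (2) require. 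As written, the final step of your argument would not close.
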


\begin{remark}
    Note that if $F$ is incompressible, then it is automatically $\boundary$--incompressible 
    at $L$ unless $F$ is an annulus parallel into $T_L$.
\end{remark}

\begin{remark}
    Theorem~\ref{thm:essentialsurfaces} holds more generally when $K$ is a link
    of multiple components.  Just let $T_K$ be the union of the corresponding
    torus components of $\bdry X$ and define $\Delta_K$ to be the maximum of
    $\geomint{\alpha}{\beta}$ for slopes $\alpha$  of $\bdry F$ and $\beta$ of
    $\bdry Q$ among components of $T_K$, and then $\Delta'_K = \max(\Delta_K,
    1)$.  The proof below carries through unchanged.
\end{remark}

\begin{proof}
    Let $F$ and $Q$ be as above, and take $j\in\{K,L\}$. Fixing an orientation
    on $F$ ($Q$) , we call two components of $\partial F \cap T_j$ ($\partial
    Q \cap T_j$, resp.) {\em parallel} if they inherit coherent orientations on
    $T_j$. They are {\em anti-parallel} otherwise.  For example, by definition,
    all components of $\partial Q \cap T_L$ are parallel. Label the components
    of $\partial F \cap T_j$ from $1$ to $|\partial F \cap T_j|$ in sequence
    along $T_j$.  Similarly label the components of $\partial Q \cap T_j$.
    Label the components of $\partial F \cap \partial M$ ($\partial Q \cap
    \partial M$) arbitrarily. Assume that $F$ and $Q$ have been isotoped to
    intersect transversally and minimally.  Abstractly cap off the boundary
    components of these surfaces with disks to form the respective closed
    surfaces $\widehat{F}$ and $\widehat{Q}$.  (Note that $\chi(\widehat{F})
    = \chi(F) + f_K + f_L + f_M$.)   Regarding these capping disks as fat
    vertices and the arcs of the intersection $F \cap Q$ as edges we create the
    {\em fat-vertexed graphs} $G_F$ and $G_Q$ in the respective closed surfaces
    $\widehat{F}$ and $\widehat{Q}$.  {\em Label} the endpoint of an edge in
    one graph with the vertex of the other graph whose boundary contains the
    endpoint.

    Figure~\ref{fig:intersection} gives an example of how the labelled graphs
    $G_Q,G_F$ arise.

    \begin{figure}
        \centering
        \includegraphics[width=0.5\textwidth]{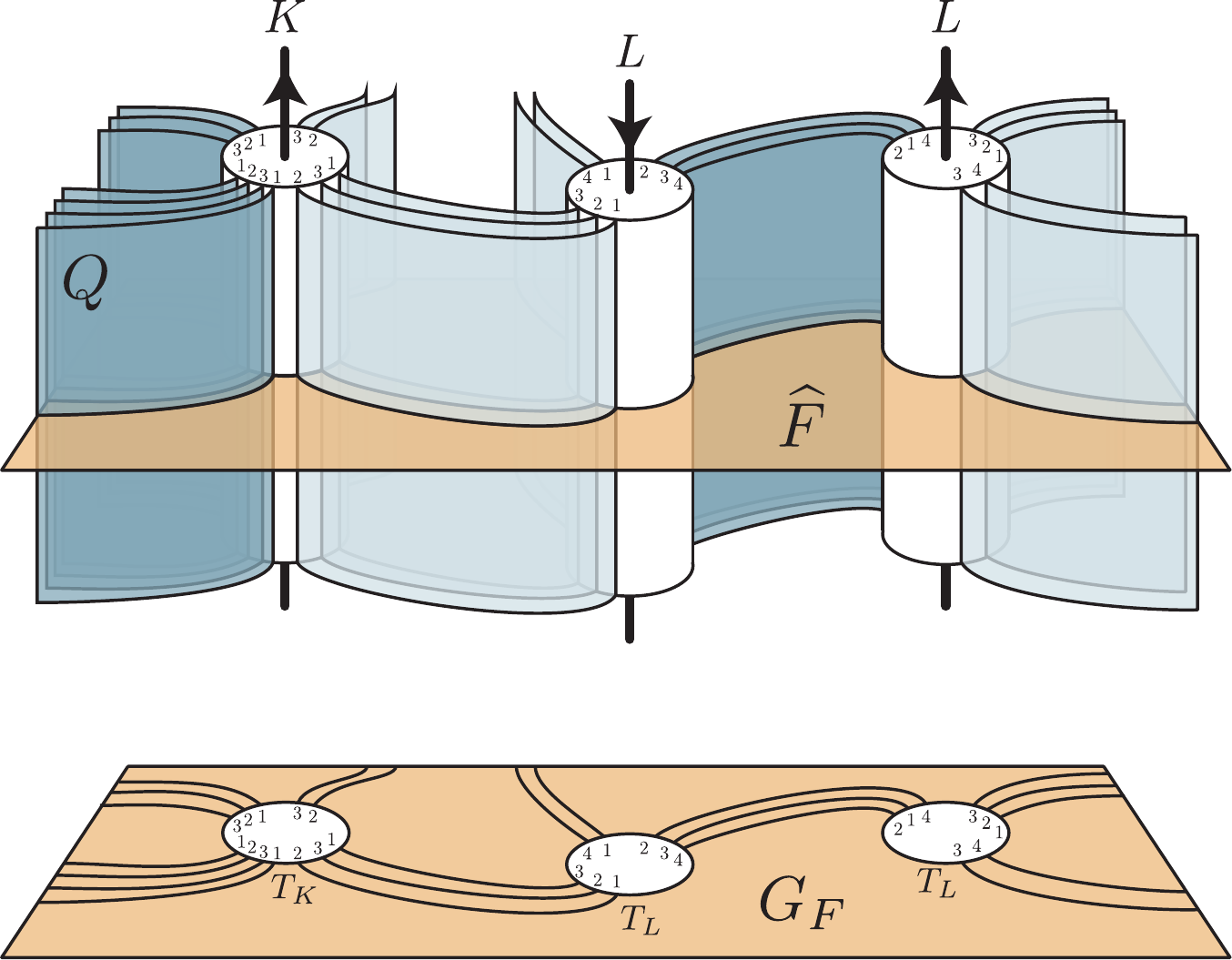}
        \caption{An example of a portion of the labelled graph $G_F$ arising
            from the intersection of $Q$ with $F$ and, say, $|\bdry Q \cap
            T_K|=3$  with $\Delta_K=4$ and $|\bdry Q \cap T_L|=4$ with
            $\Delta_L=2$.  Labels of $G_F$ are given by the  corresponding
            component of $\partial Q \cap \bdry X$.}
        \label{fig:intersection}
    \end{figure}

    Two vertices on $G_F$ ($G_Q$) are {\em parallel} or {\em anti-parallel} if
    the corresponding boundary components of $F$ ($Q$, resp.) are parallel or
    anti-parallel (in particular, for vertices to be parallel or anti-parallel
    they must corresond to boundary components which are both on $T_K$ or
    both on $T_L$).  The orientability of $F$, $Q$, and $X$ give the

    \noindent {\em Parity Rule:} An edge connecting parallel vertices on one
    graph must connect anti-parallel vertices on the other graph.

    Observe that a vertex of $G_Q$ corresponding to $T_j$ has valence $f_j
    \Delta_j$.   Let $V_j$ be the set of vertices in $G_Q$ that corresponds to
    $T_j$ for $j=K,L$, and let $V_M$ be the set of vertices corresponding to
    $\bdry M$.  Recall that because $Q$ is catching, $V_L$ is non-empty. Since
    $F$ is $\bdry$--incompressible along $T_L$, the Parity Rule guarantees that
    $G_Q$ contains no monogons (i.e.\ $1$--sided faces) at any vertex of $V_L$.

    Assume
    \[\frac{ \Delta_L}{ \max(-6\chi(Q),2)} > 6 f'_M
      (f'_K \Delta'_K + f_M-\chi(\widehat{F}) +2) > 0 \tag{$*$}\]
    and note that the second inequality does indeed hold true since
    $f'_K \Delta'_K \geq 1$.

    An edge of a graph is called {\em trivial} if it bounds a monogon face. Two
    edges are {\em parallel} if there is a sequence of bigon faces between
    them.

    \begin{claim}[cf.\  Claim 2.9 \cite{BGL}]\label{claim:parallel}
        The graph $G_Q$ contains at most $\max(-3 \chi(Q),1)$ parallelism
        classes of non-trivial edges. \qed
    \end{claim}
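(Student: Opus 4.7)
The plan is to pass from $G_Q$ to a \emph{reduced} subgraph $\Lambda \subset \widehat{Q}$ that retains exactly one representative edge from each parallelism class of non-trivial edges of $G_Q$, and then bound the number of edges of $\Lambda$ by an Euler characteristic count on $\widehat{Q}$. By construction, $\Lambda$ has no trivial edges (since we removed them) and no pair of parallel edges (since parallel edges all lie in a single parallelism class, from which we chose only one representative). Write $V$, $E$, $F$ for the number of vertices, edges, and complementary faces of $\Lambda \subset \widehat{Q}$; note $V = |\bdry Q|$, so $\chi(\widehat{Q}) = \chi(Q) + V$.

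First I would dispose of the trivial case: if $E \le 1$, then we automatically have at most $1 \le \max(-3\chi(Q),1)$ parallelism class, so we are done. Hence I may assume $E \ge 2$. I would then verify that every face of $\Lambda$ in $\widehat{Q}$ has at least $3$ edges on its boundary: a face with one edge would force $\Lambda$ to contain a trivial edge (monogon), and a face with two edges would force two edges to be parallel through a bigon, both excluded by construction.

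Next I would handle the possibility that some face of $\Lambda$ is not a disc. I would add auxiliary arcs inside each non-disc face to cut it into discs; each such arc increases $E$ and $F$ by $1$ each and preserves the property that every face boundary has at least $3$ edges (one can choose the arcs to touch existing vertices only at their endpoints, using that each boundary component of the face contains at least one vertex of $\Lambda$, or by placing a single new vertex and similar care). This yields a cell decomposition $\Lambda'$ of $\widehat{Q}$ with $E' \ge E$, $V' \ge V$, and every face a disc with $\ge 3$ edges. From $V' - E' + F' = \chi(\widehat{Q})$ and $2E' \ge 3F'$ I obtain $E' \le 3(V' - \chi(\widehat{Q}))$, and since the added arcs raise $V$ and $E$ by the same amount, this rearranges to $E \le 3(V - \chi(\widehat{Q})) = -3\chi(Q)$.

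The main obstacle is bookkeeping in the second paragraph: ensuring that the process of subdividing non-disc faces does not secretly create bigons or monogons and that the counting stays tight. Once that is handled, combining the two cases gives $E \le \max(-3\chi(Q),1)$, which is precisely the stated bound on the number of parallelism classes of non-trivial edges of $G_Q$.
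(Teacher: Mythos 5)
Your overall strategy --- reduce to a subgraph with one edge per parallelism class, then run an Euler characteristic count --- diverges from the paper's argument and, as written, has two genuine gaps. The paper avoids vertices altogether: it regards the chosen representatives as properly embedded arcs in the bounded surface $Q$ (not in $\widehat{Q}$), completes them to an ideal triangulation of the interior of $Q$, and uses the exact relations $3F = 2E'$ and $\chi(Q) = -E'+F$ to get $E' = -3\chi(Q)$ on the nose. Your version works in the closed surface $\widehat{Q}$ with the fat vertices present, and this is where the trouble enters. The concrete error is the last algebraic step: from $E' \le 3\bigl(V' - \chi(\widehat{Q})\bigr)$ with $V' = V+m$ and $E' = E+m$ you get only $E \le 3\bigl(V - \chi(\widehat{Q})\bigr) + 2m$, which is \emph{weaker} than what you need; the inequality does not ``rearrange'' to $E \le 3\bigl(V - \chi(\widehat{Q})\bigr)$ when vertices are added in equal number to edges. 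The argument only closes if the subdividing arcs introduce \emph{no} new vertices, so that $E \le E' \le 3\bigl(V - \chi(\widehat{Q})\bigr) = -3\chi(Q)$ directly; the fallback of ``placing a single new vertex'' must be discarded, and you must check that arcs between existing vertices always suffice to cut every non-disc face into discs with at least three edge-sides (this can fail, e.g., for an annular face both of whose boundary circles are bare fat-vertex circles carrying no edges).

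The second gap is the assertion that the reduced graph $\Lambda$ has no bigon faces. Parallelism is defined by chains of bigon faces of $G_Q$, while the faces of $\Lambda$ are obtained by merging faces of $G_Q$ across the deleted edges. Deleting a trivial edge can create a bigon that was not there before: if a trivial edge $t$ lies on the boundary of a three-sided face whose other two sides are non-trivial edges $e_1$ and $e_2$, then $e_1$ and $e_2$ are \emph{not} parallel in $G_Q$ (no bigon face joins them), both survive into $\Lambda$, and yet they cobound a bigon face of $\Lambda$ once $t$ and its monogon are erased. So ``one representative per parallelism class'' does not by itself exclude bigons of $\Lambda$, and the inequality $2E' \ge 3F'$ is not yet justified. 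The paper's route sidesteps this bookkeeping by passing to isotopy classes of essential arcs in $Q$ (an ideal triangulation cannot contain two isotopic arcs), at the cost of having to know that the chosen representatives are essential and pairwise non-isotopic as arcs in $Q$. To repair your argument you would need an additional step guaranteeing that every face of a suitably re-chosen reduced graph has at least three edge-sides before invoking the Euler characteristic count.
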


    \begin{proof}
        Pick one edge from each parallelism class of non-trivial edges in $G_Q$
        and consider it as an arc properly embedded in the surface $Q$. Let $E$
        be the collection of all such arcs. Then $|E|$ is the number of
        parallelism classes in $G_Q$.  First note that $|E| \leq 1$ when
        $\chi(Q) \geq 0$.  So we assume $\chi(Q)<0$.  Because the edges in $E$
        are not boundary parallel in $Q$, they can be completed to an ideal
        triangulation of (the interior of) $Q$ by adding more arcs between the
        components of $\bdry Q$ as needed.  If $E'$ is this resulting number of
        edges and $F$ is the number of ideal triangles, then we have both $3F
        = 2E'$ and $\chi(Q) = -E'+F$.  Thus $E \leq E'=-3\chi(Q)$.
    \end{proof}

    Since any vertex of $V_L$ has valence $f_L \Delta_L$ (and there are no
    monogons of $G_Q$ at vertices of $V_L$), Claim~\ref{claim:parallel} shows
    that there exists a set $\mathcal{E}$ of at least $\ceil{\frac{(f_L
    \Delta_L/2)} { \max(-3\chi(Q),1)}}=\ceil{ \frac{f_L \Delta_L }{
    \max(-6\chi(Q),2) }}$ mutually parallel edges in $G_Q$ with an end point at
    a vertex of $V_L$, where $\ceil{x}$ is the smallest integer not less than
    $x$.   There are three cases for the other end point of these edges: either
    it is at a vertex of  (a) $V_L$ (perhaps the same vertex); (b) $V_M$; or
    (c) $V_K$.   By the assumed inequality ($*$) above, $|\mathcal{E}| \geq
    \frac{f_L \Delta_L }{ \max(-6\chi(Q),2) }> f_K \Delta_K$, so there are more
    edges in $\mathcal{E}$ than the valence of any vertex in $V_K$; hence the
    other end point of these edges cannot be in $V_K$ and $(c)$ does not arise.
    Also note  that the inequality $(*)$ implies that $\mathcal{E}$ contains at
    least two edges, more than $f_L$ edges, and more than $f_M$ edges.

    Let $\widehat{F}_{L}$ be the subsurface of $\widehat{F}$ obtained by
    capping off the boundary components of $F$  in $T_L$ and
    $\widehat{F}_{L,M}$ be obtained by further capping off any boundary
    components of $\widehat{F}_{L}$ in $\bdry M$.

    For case (a), form the subgraph $G_{F_L}(\mathcal{E})$ of $G_F$ in the
    surface $\widehat{F}_{L}$ that consists of the edges $\mathcal{E}$ and all
    the $f_L$ vertices of $\bdry F \cap T_L$.

    For case (b),  form the subgraph $G_{F_{L,M}}(\mathcal{E})$ of $G_F$ in the
    surface $\widehat{F}_{L,M}$  that consists of the edges $\mathcal{E}$ and
    all the $f_L$ vertices of $\bdry F \cap T_L$ and all $f_M$ vertices of
    $\bdry F \cap \bdry M$.

    Now we have
    \begin{align*}
        |\mathcal{E}| &\geq \frac{f_L \Delta_L }{ \max(-6\chi(Q),2) }\\
                      &\geq6f_Lf_M'(f'_K \Delta'_K + f_M -\chi(\widehat{F})+2)\\
                      &\geq 6f_Lf_M'( f'_K +f_M - \chi(\widehat{F})+1) + 6f_Lf_M'
    \end{align*}

    In particular, we have for case (a)
    \[|\mathcal{E}| \geq 3f_L[\max(1-\chi(\widehat{F}_{L}),0)] + 3f_L \tag{$**$}\]

    Using that $2f_Lf_M'=2f_Lf_M \geq (f_L+f_M)$ in case (b), we get
    \[|\mathcal{E}| \geq 3(f_L+f_M)(f_M+1-\chi(\widehat{F}_{L,M})) +3(f_L+f_M) \tag{$***$}\]

    Observe that neither  $G_{F_L}(\mathcal{E})$ nor $G_{F_{L,M}}(\mathcal{E})$
    has any monogons.   In the former, case (a), this is because the vertices
    of $V_L$ are parallel and the Parity Rule.  In the latter, case (b),  this
    is because the edges connect vertices corresponding to different components
    of $\bdry X$.

    \begin{claim}[cf.\  Claim~2.10 \cite{BGL}]\label{clm:parallelP}
        Let $G$ be a graph in a surface $S$. Let $V$ be the number of vertices
        and $E$ the number of edges of $G$, and let $\chi(S)$ be the Euler
        characteristic of $S$. If $G$ has no monogons and $E>3V
        \max(1-\chi(S),1)$, then $G$ has parallel edges.

        Further assume either $\chi(S)>0$ or $\bdry S \neq \emptyset$. If $G$
        has no monogons and $E\geq3V \max(1-\chi(S),1)$, then $G$ has parallel
        edges. That is, equality also guarantees parallel edges in these cases.
    \end{claim}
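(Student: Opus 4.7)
The plan is to prove the claim by a standard Euler-characteristic count on $G\hookrightarrow S$. I would first suppose $G$ has no monogons and no parallel edges, and extract the bound $E\leq 3(V-\chi(S))$. Writing $F_d$ for the number of disk faces and $\bar f$ for the closure of a face $f$ in $S$, the inequality $\chi(\bar f)\leq 1$ (equality iff $\bar f$ is a disk) applied to $\chi(S)=V-E+\sum_f\chi(\bar f)$ gives $F_d\geq\chi(S)+E-V$. Independently, each interior edge of $G$ contributes $2$ to $\sum_f|\partial f|$ and, lacking monogons and bigons, every disk face has boundary walk of length at least $3$, so $3F_d\leq 2E$. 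Eliminating $F_d$ yields the master bound
\[E\leq 3\bigl(V-\chi(S)\bigr).\]

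For the first statement I would verify the elementary inequality $3V\max(1-\chi(S),1)\geq 3(V-\chi(S))$ for all $V\geq 1$ by a case split on the sign of $\chi(S)$: when $\chi(S)\leq 0$ this reduces to $\chi(S)(V-1)\leq 0$, and when $\chi(S)\geq 1$ it reduces to $\chi(S)\geq 0$. Therefore any $E>3V\max(1-\chi(S),1)$ strictly exceeds the master bound $3(V-\chi(S))$, contradicting the absence of parallel edges.

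For the strengthened equality version, I would sharpen to the strict inequality $E<3(V-\chi(S))$ under the extra hypothesis. If $\chi(S)>0$ this is automatic, since $3(V-\chi(S))<3V=3V\max(1-\chi(S),1)$, so $E\geq 3V\max(1-\chi(S),1)$ already violates the master bound. If $\partial S\neq\emptyset$, then equality in the master bound would require every face of $G$ to be a triangular disk, turning $G$ into a triangulation of $S$; but since the edges and vertices of $G$ are interior to $S$, any face containing a component of $\partial S$ cannot itself be a disk, ruling the extremal configuration out. In either case the hypothesis $E\geq 3V\max(1-\chi(S),1)\geq 3(V-\chi(S))$ again violates the now-strict bound.

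The main subtlety will be handling non-disk faces of Euler characteristic zero (annuli or Möbius bands) in the equality analysis, since they contribute $0$ rather than a strictly negative quantity to $\sum\chi(\bar f)$ and hence do not automatically tighten the Euler count; I would dispose of these by observing that the equality configuration further forces such a face to be disjoint from $G$ and to have boundary entirely in $\partial S$, a structure incompatible with $G$ actually triangulating the connected surface $S$ once $\partial S\neq\emptyset$.
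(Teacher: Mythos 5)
Your proof is correct and follows essentially the same route as the paper's: both rest on the Euler characteristic identity $\chi(S)=V-E+\sum_f\chi(f)$ together with the edge--face incidence count $2E\geq 3F_d$ forced by the absence of monogons and bigons. The only organizational difference is that the paper first normalizes the cell structure (adding edges so that each boundary component of $S$ acquires an annular collar face, which injects a $+|\partial S|$ term into the incidence count) and then runs a single algebraic derivation with $C=3\max(1-\chi(S),1)$, whereas you isolate the clean master bound $E\leq 3(V-\chi(S))$ and settle the equality case by analyzing when that bound is tight; your treatment of the degenerate $\chi=0$ faces is at the same level of rigor as the paper's normalization step.
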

    \begin{proof}

        Assume there are no parallel edges in $G$, and let $V,E$ be the number
        of vertices, edges.  Assume $E>3V \max(1-\chi(S),1)$.  Then we may add
        edges to $G$ so that all faces are either $m$--gons with $m\geq 3$ or
        annuli with one boundary component being a component of $\bdry S$ and
        the other consisting of a single edge and vertex of $G$.
        Since this doesn't change the number of vertices and only increases the
        number of edges, we still have $E>3V \max(1-\chi(S),1)$.  Now
        $\chi(S)=V-E+F$ where $F$ is the number of disk faces of $G$.  Because
        every edge of $G$ is on the boundary of the faces (including the
        annular faces) twice, $2E \geq 3F + |\bdry S|$.  Let $C=3
        \max(1-\chi(S),1) \geq 3$.  Our assumption that $E > CV $ shows
        both that $V <E/C$ and $E>C$.

        Therefore $\chi(S) = V-E+F < E/C - E + 2E/3 - |\bdry S|/3$.  Hence  $C
        \chi(S) < E (1-C/3)-|\partial S|C/3$. Since $C \geq 3$, this implies
        that $\chi(S)<0$. Then $C=3(1-\chi(S))$. So $C \chi(S)
        < E (\chi(S))-|\partial S|(1-\chi(S))$. Consequently, $C
        > E + (1-1/\chi(S)) |\bdry S| \geq E$.  This contradicts that $C<E$.

        Now assume either $\chi(S)>0$ or $\bdry S \neq \emptyset$.  Change the
        strict inequalities $<$ and $>$ above to $\leq$ and $\geq$.  Then $C
        \chi(S) \leq E (1-C/3)-|\partial S|C/3$ implies that $\chi(S) \leq 0$,
        hence $|\bdry S| >0$. A second application of the preceding inequality
        says that in fact $\chi(S)<0$. As above we conclude that  $C \geq
        E + (1-1/\chi(S)) |\bdry S| > E$, contradicting that $C \leq E$.
    \end{proof}

    Applying Claim~\ref{clm:parallelP},  $G_{F_L}(\mathcal{E})$ in case (a) has
    parallel edges (with $S=\widehat{F}_L, V=f_L, E=|\mathcal{E}|$, using $(**)$),
    and $G_{F_{L,M}}(\mathcal{E})$ in case (b) has parallel edges (with
        $S=\widehat{F}_{L,M} ,V=f_L+f_M, E=|\mathcal{E}|$, using $(***)$ and
    $f_M>0$). That is, there exist edges $e,e' \in \mathcal{E}$ bounding
    rectangles $D_Q$ in $G_Q$ and $D_F$ in  $G_{F_L}(\mathcal{E})$ or
    $G_{F_{L,M}}(\mathcal{E})$ such that $D_Q \cap D_F= \{e \cup e'\}$ (after
        possibly surgering away simple closed curves of intersection in the
    interiors of these disks).

    \begin{remark}
        Note that Claim~\ref{clm:parallelP} allows for vertices with valence
        $0$, which may occur in its application to $G_{F_{L,M}}$, at vertices
        corresponding to $\partial M$.
    \end{remark}

    In case (a), $D_Q \cup D_F$ is a \mobius band in $X$ with boundary on $T_L$
    of slope $\alpha_L$. This follows from the proof of Lemma~2.1 of
    \cite{Gordon98}. Following that proof, the boundary of the \mobius band has
    the slope of $\partial F$ since the rectangle $D_F$ connects anti-parallel
    vertices in $F$  ($D_Q$ connects parallel vertices in $Q$, hence the Parity
    Rule guarantees that $D_F$ connects anti-parallel vertices). This is
    conclusion $(1)$ of the Theorem.

    In case (b), $D_Q \cup D_F$ is an annulus in $X$ with a boundary component
    on each of $T_L$ and $\partial M$.   The boundary component of this annulus
    must intersect a component of $\bdry F$ and of $\partial Q$ algebraically
    a non-zero number of times on $T_L$. Thus the boundary component of this
    annulus is essential on $T_L$ (and isotopic to neither a component of
    $\bdry F$ nor $\bdry Q$). This is conclusion $(2)$ of the Theorem.
\end{proof}

\section{Constructing $M$ of strong handlebody or Seifert type}\label{sec:wrapping}

\subsection{Disk-busting and annulus-busting curves}\label{sec:busting}
We show how to construct disk-busting and annulus-busting curves in the
boundary of a handlebody.

\begin{definition}\label{def:diskbusting}
    Let $M$ be an orientable 3-manifold and $\gamma$ a collection of simple
    closed curves in $\partial M$.  We say that $\gamma$ is
    \defn{$n$--disk-busting} for a positive integer $n$ if any properly
    embedded disk in $M$ that intersects $\gamma$ fewer than $n$ times is
    boundary parallel.  We simply say $\gamma$ is \defn{disk-busting} if it is
    $1$--disk-busting.  We say $\gamma$ is \defn{annulus-busting} in $M$ if it
    is disk-busting and if any annulus properly embedded in $M$ and disjoint
    from $\gamma$ is compressible or boundary parallel in $M$.
\end{definition}

In the discussion below, we then focus on essential disks and annuli in
$M$.

\begin{definition}\label{def:essential}
    A disk properly embedded in a $3$--manifold $M$ is called {\it essential} if
    it is not parallel into the boundary of $M$. An annulus properly embedded in
    $M$ is said to be {\it essential} if it is incompressible and not parallel
    into the boundary of $M$.
\end{definition}

\begin{lemma}\label{lem:1-disk-busting}
    Let $\gamma$ be a collection of curves in a component of $\partial M$ of
    genus $g$.  If $\gamma$ is $1$--disk-busting, then either $g=1$ or $\gamma$
    is $2$--disk-busting.
\end{lemma}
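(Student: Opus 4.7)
The plan is to argue by contradiction. Suppose $\gamma$ is $1$--disk-busting and the component of $\partial M$ containing $\gamma$ has genus $g \ge 2$, but $\gamma$ is not $2$--disk-busting. Then there exists an essential disk $D \subset M$ with $|D \cap \gamma| \le 1$. The $1$--disk-busting hypothesis rules out $|D \cap \gamma| = 0$, so $|D \cap \gamma| = 1$. Let $\gamma_0$ denote the component of $\gamma$ meeting $D$; then $\partial D$ and $\gamma_0$ are simple closed curves on the genus $g$ component of $\partial M$ intersecting transversely in a single point $p$.

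The key idea is to band-sum $\partial D$ with $\gamma_0$ to produce a new curve $c$ on $\partial M$ which is disjoint from $\gamma$, essential on $\partial M$, and bounds a disk in $M$, directly contradicting $1$--disk-busting. Explicitly, I would let $T_0 \subset \partial M$ be a small regular neighborhood of $\partial D \cup \gamma_0$. Since two simple closed curves meeting transversely in one point have regular neighborhood a once-punctured torus, $T_0$ is a once-punctured torus whose boundary is a single simple closed curve $c$. Choosing $T_0$ small enough ensures $c$ is disjoint from the remaining components of $\gamma$, and $c$ is disjoint from $\gamma_0$ since $\gamma_0 \subset T_0 \setminus \partial T_0$.

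Next I would verify that $c$ is essential on $\partial M$. If not, $c$ bounds a disk $D_0 \subset \partial M$; this $D_0$ cannot lie on the $T_0$ side because the once-punctured torus $T_0$ is not planar. Hence $T_0 \cup D_0$ is a closed torus embedded in the genus $g$ component of $\partial M$, and since a connected closed subsurface of a connected surface equals the whole surface, that component is itself a torus, forcing $g = 1$---a contradiction.

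Finally I would produce a disk $D' \subset M$ with $\partial D' = c$. Consider $N(D \cup \gamma_0)$ in $M$: since $D \cap \gamma_0 = \{p\}$, this neighborhood is obtained from the $3$--ball $N(D)$ by attaching a $3$--dimensional $1$--handle (the thickening of the arc $\gamma_0 \setminus \{p\}$) along two disks of $\partial N(D) \cap \partial M$ lying near $p$. Hence $N(D \cup \gamma_0)$ is a solid torus, and its boundary torus decomposes as the union of $T_0$ (the part on $\partial M$) and a disk $D'$ in the interior of $M$ with $\partial D' = c$. Then $D'$ is a properly embedded essential disk in $M$ disjoint from $\gamma$, contradicting the $1$--disk-busting hypothesis. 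The main technical step is simply the regular-neighborhood identification of $T_0$ as a once-punctured torus and of $N(D \cup \gamma_0)$ as a solid torus; both are routine, and the rest of the argument is immediate.
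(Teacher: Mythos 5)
Your proof is correct and is essentially the paper's argument: the frontier disk $D'$ of $N(D\cup\gamma_0)$ that you construct is exactly the disk obtained by banding two parallel copies of $D$ together along $\gamma_0$, which is how the paper phrases it. You simply spell out the details (the once-punctured torus neighborhood, essentiality of $c$, and where the $g=1$ exception enters) that the paper leaves implicit.
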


\begin{proof}
Assume there is a disk $D$ in $M$ that $\gamma$ intersects once. Then using
a component of $\gamma$ to band two copies of $D$ to itself gives an essential
disk disjoint from $\gamma$. But $\gamma$ is disk-busting.
\end{proof}

\begin{definition}
Recall that a {\em pair of pants} is a surface homeomorphic to a $2$--sphere
minus three disjoint open disks.   For a pair of pants $P$, a \defn{seam} is an
essential properly embedded arc in $P$ with endpoints on distinct components of
$\bdry P$.  Observe that a pair of pants $P$ has three isotopy classes of seams
and they have mutually disjoint representatives.
\end{definition}

\begin{definition}
Let $\Sigma$ be a connected, closed, compact surface of genus at least two.
A \defn{pants decomposition} of $\Sigma$ is a collection $\Pcal$ of \sccs in
the surface such that the complement of the curves in the surface is
a collection of (interiors of) pairs of pants.   
\begin{itemize}
\item 
Given a pants decomposition
$\Pcal$ of $\Sigma$, a collection of curves  $\gamma$ embedded in $\Sigma$ is
called \defn{$k$--seamed with respect to $\Pcal$} for an integer $k \geq 0$ if,
for each pair of pants $P \in \Sigma - \Pcal$, the intersection $\gamma \cap P$
is a collection of seams with at least $k$ members in each isotopy class.
\item 
For a handlebody $H$, a pants decomposition $\Pcal$ of $\boundary H$ is
\defn{compatible with $H$} if every component of $\Pcal$ bounds a disk in $H$.
\end{itemize}
\end{definition}

\begin{lemma}[Cf.\ Lemma~2.10, \cite{Yoshizawa}] \label{lem:seamedisbusting}

    Let $H$ be a handlebody and let $\Pcal$ be a pants decomposition of $\bdry
    H$ that is compatible with $H$.  If a collection of curves $\gamma$ in
    $\bdry H$ is $k$--seamed with respect to $\Pcal$, then $\gamma$ is
    $k$--disk-busting in $H$.

\end{lemma}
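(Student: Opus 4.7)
The plan is to prove the contrapositive: that every essential disk $D$ in $H$ meets $\gamma$ at least $k$ times.  Using the compatibility of $\Pcal$ with $H$, fix a disjoint system $\mathcal{D}$ of meridian disks in $H$ with $\bdry \mathcal{D} = \Pcal$; cutting $H$ along $\mathcal{D}$ produces a disjoint union of $3$--balls, each of whose boundary is a single pair of pants $P_j$ of $\bdry H \cut \Pcal$ glued to copies of disks of $\mathcal{D}$.  First I would isotope $\bdry D$ on $\bdry H$ into simultaneous minimal position with $\gamma \cup \Pcal$ (a standard fact for curves on an orientable surface), then isotope $D$ rel $\bdry D$ to minimize $|D \cap \mathcal{D}|$; innermost-disk arguments together with irreducibility of $H$ ensure $D \cap \mathcal{D}$ is a collection of arcs.

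If $\bdry D \cap \Pcal = \emptyset$, then $\bdry D$ lies in a single pant, is essential there, and hence is isotopic to some $c \in \Pcal$; by irreducibility, $D$ is isotopic to the disk of $\mathcal{D}$ bounded by $c$.  In each pant adjacent to $c$, two isotopy classes of seams have an endpoint on $c$, each with at least $k$ representatives, yielding $|D \cap \gamma| = |c \cap \gamma| \geq 2k$.  Otherwise, take an outermost arc $\alpha$ of $D \cap \mathcal{D}$ on $D$, cutting off a subdisk $D' \subset D$ with $\bdry D' = \alpha \cup \beta$, where $\alpha \subset \Delta \in \mathcal{D}$ and $\beta$ is an arc of $\bdry D$ lying entirely in one pant $P_j$ with both endpoints on $c = \bdry \Delta$.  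Minimal position of $\bdry D$ with $\Pcal$ forces $\beta$ to be essential in $P_j$; since both endpoints of $\beta$ lie on $c$, the arc $\beta$ separates $P_j$ into two annuli, each containing one of the other two boundary components $c'$, $c''$ of $P_j$.  Consequently every arc in $P_j$ from $c'$ to $c''$ must cross $\beta$.  The $k$--seamed hypothesis provides at least $k$ arcs of $\gamma \cap P_j$ in the isotopy class of seams between $c'$ and $c''$, and bigon-freeness of $\bdry D$ with $\gamma$ in $\bdry H$ restricts to bigon-freeness of $\beta$ with $\gamma \cap P_j$ in $P_j$; therefore $|\beta \cap \gamma| \geq k$, giving
\[ |D \cap \gamma| = |\bdry D \cap \gamma| \geq |\beta \cap \gamma| \geq k. \]

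The main obstacle is the outermost-arc analysis: ensuring that the arc $\beta$ produced from the outermost subdisk is essential in $P_j$ with both endpoints on a single curve of $\Pcal$, and then tracking how it separates $P_j$ into two annuli containing the other two boundary components.  Once $\beta$ is identified as such a ``wave,'' the required intersection count with seams of the appropriate isotopy class follows directly from the $k$--seamed hypothesis together with the inheritance of minimal position from $\bdry H$ to each pant.
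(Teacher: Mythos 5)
Your argument is correct and follows essentially the same route as the paper's proof: minimize intersections of $\partial D$ with $\gamma$ and with the disk system bounded by $\Pcal$, handle the disjoint case via parallelism to a curve of $\Pcal$, and otherwise use an outermost arc on $D$ to produce an essential ``wave'' $\beta$ in a single pair of pants with both endpoints on one boundary circle, which therefore separates the other two boundary components and must meet each of the $\geq k$ seams of $\gamma$ joining them. The only cosmetic difference is that you invoke bigon-freeness of $\beta$ with $\gamma$, which is not actually needed at that step since the separation property alone forces the $k$ intersections.
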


\begin{proof}
    Let $D$ be an essential disk of $H$ and isotope it so that $|\partial
    D \cap \gamma|$ is minimal. Under this restriction, isotope $D$ to
    intersect $\Pcal$ minimally. If $D$ is disjoint from $\Pcal$, then clearly
    the result holds. Let $\mathcal{D}(\Pcal)$ be the collection of disks in
    $H$ bounded by $\Pcal$. We may assume that there are no simple closed
    curves of intersection between $D$ and $\mathcal{D}(\Pcal)$.  An arc of
    intersection $c$ that is outermost on $D$ cuts off a disk $\Delta$ whose
    boundary is $c \cup e$ where $e \subset \partial D$ is properly embedded in
    a pair pants coming from $\Pcal$. The arc $e$ cannot be parallel into
    $\Pcal $, else we could isotope $D$ to reduce its intersection with
    $\gamma$  or $\Pcal$. Thus $e$ is an essential separating arc in this pair
    of pants and hence must intersect at least  $k$ seams.
\end{proof}

\bigskip

\begin{definition}\label{def:plumbing}
    Let $\gamma_i$ be a collection of simple closed curves in the boundary of
    the 3-manifold $M_i$, $i \in \{1,2\}$.  Let $b_i$ be a band in $\partial
    M_i$ running from $\gamma_i$ to itself. That is, $b_i=[0,1] \times [0,1]
    $ is embedded in $\partial M_i$ such that $\gamma_i \cap b_i = \{0,1\}
    \times [0,1]$. The \defn{boundary plumbing} of the pairs $(M_1, \gamma_1)$
    and $(M_2, \gamma_2)$ along $b_1,b_2$ is the pair $(M,\gamma)$ where

    \begin{itemize}
        \item the 3-manifold $M$ is the gluing of $M_1$ and $M_2$ via
            a homeomorphism $h:b_1 \to b_2$ that identifies $\{0,1\} \times
            [0,1]$ of $b_1$ with $[0,1] \times \{0,1\}$ of $b_2$; and

        \item the collection of curves $\gamma$ in $\partial M$ is the closure
            of  $(\gamma_1 - b_1)  \cup  (\gamma_2 - b_2)$.
    \end{itemize}

    In the plumbing construction, we refer to $b_i$ as a \defn{plumbing band},
    and say it is \defn{non-trivial} if the core of the band, $[0,1] \times
    \{1/2\}$, is not isotopic rel boundary into $\gamma_i$.

    Note that $b_1=b_2$ becomes a properly embedded disk, $D$, in $M$ which
    intersects $\gamma$ four times.  We refer to $D$ as the \defn{decomposing
    disk} for the boundary plumbing $(M,\gamma)$.
\end{definition}

\bigskip

\begin{lemma}\label{lem:3diskbusting}
    Let $\gamma_i$ be a collection of essential simple closed curves in
    $\partial M_i$  which are $3$--disk-busting in $M_i$, $i \in \{1,2\}$.  Let
    $(M,\gamma)$ be  a boundary plumbing of $(M_1,\gamma_1)$ with
    $(M_2,\gamma_2)$ where the plumbing bands are non-trivial. Then

    \begin{enumerate}
        \item No component of $\gamma$ is trivial and $\gamma$ is
            $3$--disk-busting in $M$.

        \item If $A$ is a properly embedded annulus in $M$ disjoint from
            $\gamma$ and which is neither compressible nor boundary parallel in
            $M$, then $A$ can be isotoped in $M- \gamma$ so that it is disjoint
            from $\gamma \cup D$ where $D$ is the decomposing disk of the
            boundary plumbing.

        \item If each $\gamma_i$ is annulus-busting in $M_i$, then $\gamma$ is
            annulus-busting in $M$.
    \end{enumerate}
\end{lemma}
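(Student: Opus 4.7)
All three parts are proved by intersection-minimization arguments with the decomposing disk $D$, exploiting that $D \cap \gamma$ consists of four points on $\partial D$ (the corners of the bands $b_i$), so that the interior of $D$ is disjoint from $\gamma$. Throughout I assume each $M_i$ is irreducible, which holds in the intended applications to handlebodies.

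For (1), I would first rule out trivial components of $\gamma$: a component $c$ bounding a disk in $\partial M$ would, when cut along $\partial D$, yield a disk in some $\partial M_i$ whose boundary consists of arcs of $\gamma_i$ joined to arcs on $\partial b_i$; non-triviality of the plumbing band $b_i$ combined with essentiality of $\gamma_i$ in $\partial M_i$ then forces a contradiction with the disk-bustingness of $\gamma_i$. Next, suppose $E \subseteq M$ is an essential disk with $|\partial E \cap \gamma| \leq 2$, and isotope $E$ to minimize $|E \cap D|$. Standard innermost-disk arguments (using irreducibility) remove closed curves of $E \cap D$, so $E \cap D$ is a disjoint union of arcs. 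If $E \cap D = \emptyset$ then $E \subseteq M_i$ and $3$--disk-bustingness of $\gamma_i$ gives an immediate contradiction. Otherwise, an outermost arc $\alpha$ of $E \cap D$ on $E$ cuts off a subdisk $E_0 \subseteq M_i$ with $\partial E_0 = \alpha \cup \beta$; since $\alpha$ lies in the interior of $D$ and so misses $\gamma_i$, we have $|\partial E_0 \cap \gamma_i| = |\beta \cap \gamma_i| \leq |\partial E \cap \gamma| \leq 2$. By $3$--disk-bustingness $E_0$ is parallel to a disk $E_0' \subseteq \partial M_i$, and the parallelism is used to reduce $|E \cap D|$, contradicting minimality.

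For (2), take $A$ as stated and isotope in $M - \gamma$ to minimize $|A \cap D|$. Closed curves of $A \cap D$ trivial on $A$ are handled via irreducibility, while closed curves essential on $A$ (necessarily parallel to its core), combined with an innermost disk on $D$, would give a compressing disk for $A$ contradicting incompressibility. Hence $A \cap D$ is a union of arcs on the annulus $A$, each either trivial or spanning. An outermost trivial arc cuts off a subdisk $A_0 \subseteq M_i$ with $\partial A_0 \cap \gamma_i = \emptyset$ (using $\partial A \cap \gamma = \emptyset$ and $\alpha \cap \gamma_i = \emptyset$); disk-bustingness of $\gamma_i$ forces $A_0$ to be boundary parallel in $M_i$, and a parallelism argument reduces $|A \cap D|$ exactly as in (1). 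If only spanning arcs remain, two parallel adjacent spanning arcs cut off a rectangular subdisk in $M_i$ again disjoint from $\gamma_i$, and the same argument applies.

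For (3), by (2) we may isotope $A$ to be disjoint from $D \cup \gamma$, placing it in some $M_i$. The annulus-busting hypothesis on $\gamma_i$ says $A$ is compressible or boundary parallel in $M_i$; a compressing disk in $M_i$ is a compressing disk in $M$, and a parallelism to a region $R \subseteq \partial M_i$ can, after a small further isotopy along a collar of $D$, be arranged so that $R$ is disjoint from $b_i$ and hence lies in $\partial M$, giving boundary parallelism in $M$. The principal difficulty in (1) and (2) is the reduction step after finding the boundary-parallel subdisk: when the parallelism disk $E_0'$ (or $A_0'$) overlaps $b_i$, a naive push-off across the parallelism does not reduce $|E \cap D|$. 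Here the non-triviality of the plumbing bands is essential; one replaces the arc $\alpha \subseteq \partial E_0$ with an arc on $\partial b_i$ to build a new disk whose boundary meets $\gamma_i$ strictly fewer times than allowed, driving the final contradiction.
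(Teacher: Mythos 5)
Your overall strategy (minimize intersection with the decomposing disk $D$, remove closed curves, analyze outermost arcs, invoke $3$--disk-bustingness of the $\gamma_i$) is the same as the paper's, but the two steps you lean on hardest do not work as stated. First, the intersection count for the outermost subdisk is wrong. By construction $\partial D$ is \emph{entirely} covered by $\gamma_1\cup\gamma_2$: two opposite sides of the square $D=b_1=b_2$ are the arcs $\gamma_1\cap b_1$ and the other two are $\gamma_2\cap b_2$, and $\gamma$ meets $D$ only at the four corners. Hence every endpoint of an arc $\alpha$ of $E\cap D$ is a transverse intersection of $\partial E$ with $\gamma_1$ or $\gamma_2$, even though it is not an intersection with $\gamma$. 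So your subdisk $E_0$ can meet $\gamma_i$ up to four times (two from $\beta$ plus two from $\partial\alpha$), and $3$--disk-bustingness does not apply to it. The paper's proof confronts exactly this: it isolates the case in which the outermost arc $\tau$ has both endpoints on the $\gamma_1$--sides of the rectangle $D$, and the contradiction there comes from a further analysis of the second outermost arc $\tau'$ and the subdisk $\delta''$ of $E-D$ adjacent to $\delta$, which lands in $M_2$ and meets $\gamma_2$ at most once. None of that case analysis is present in your proposal.

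Second, the reduction step ``the parallelism is used to reduce $|E\cap D|$'' is not an isotopy. Since $\alpha$ is properly embedded in $D=b_i$, any disk $E_0'\subset\partial M_i$ realizing a boundary-parallelism of $E_0$ must contain one of the two components of $b_i-\alpha$; pushing $E$ across that parallelism drags it through $D$ into the other $M_j$ and need not decrease $|E\cap D|$. You flag this difficulty at the end, but the sketched fix (replacing $\alpha$ by an arc of $\partial b_i$) is really the paper's move of surgering $D$ along the outermost subdisk, which you do not carry out and which, in the paper, only closes the case $E_0\cap\gamma=\emptyset$ (there the contradiction is that the surgered disk is boundary-parallel in $M_i$, forcing a component of $\gamma_i$ or the band to be trivial). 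The same two problems infect your part (2); note the paper structures (2) differently, taking an arc outermost on $D$ and $\partial$--compressing $A$ along the resulting subdisk of $D$ to produce a disk meeting $\gamma$ at most twice, then quoting part (1). Finally, the case $E\cap D=\emptyset$ is not an ``immediate contradiction'': such an $E$ is boundary-parallel in $M_i$, but the parallelism region may contain the band $b_i$, and excluding that possibility is precisely where the paper uses non-triviality of the plumbing band together with essentiality of $\gamma_i$.
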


\begin{proof}
    Let $D$ be the decomposing disk of the boundary plumbing. To see that no
    component of $\gamma$ is trivial we note that any such would have to
    intersect $D$. Let $E$ be the disk bounded by an innermost trivial
    component. Then an arc of $\partial D \cap E$ that is outermost on $D$
    shows that either a component of $\gamma_i$ or the plumbing band of
    $\gamma_i$ is trivial in $\partial M_i$.

    Note that the assumption that no component of $\gamma_i$ is trivial implies
    that the components of $\partial M_i$ in which they lie cannot be
    $2$-spheres.

    First we prove that $\gamma$ is $3$--disk-busting. Assume not, that there
    is  a essential disk $E$ in $M$ that intersects $\gamma$ minimally and at
    most twice. We take $E$ to intersect $D$ minimally among such disks. Then
    it has no simple closed curves of intersection with $D$ (an innermost such
    on $E$ must be boundary parallel in some $M_i$) and any arc of intersection
    must separate the four points $\gamma \cap D$ in $\partial D$. It cannot be
    disjoint from $D$ else, since $\gamma_i$ is 3-disk-busting, $\partial E$
    and $\partial D$ would have to cobound an annulus in some $\partial M_i$,
    contradicting that the band of the plumbing is non-trivial. Consider an arc
    $\tau$ of $E \cap D$ that is outermost in $E$ and let $\delta$ be the
    outermost disk that it bounds. We may assume that $\delta$ lies in $M_1$.
    Then $\delta$ cannot be disjoint from $\gamma$, else surgering $D$ along
    $\delta$ would give a disk in $M_1$ intersecting $\gamma_1$ at most twice.
    Such a disk would be trivial in $M_1$ and imply that either the plumbing
    band or some component of $\gamma_1$ is trivial. Thus we may assume
    $\delta$ intersects $\gamma$ once. Note that this means that $E$ intersects
    $\gamma$ twice, all the arcs of $E \cap D$ separate these points of
    intersection on $\partial E$, and besides $\tau$ there is exactly one other
    outermost arc, $\tau'$.  We can view $D$ as a rectangle with corners on
    $\gamma$ and with one pair of opposing sides in $\gamma_1$ and the other in
    $\gamma_2$.  Using $\delta$ to surger $D$ we again get a contradiction
    (e.g. to the minimality of intersection with $E$), unless $\tau$, viewed in
    this rectangle $D$, intersects $\gamma_1$ twice and is disjoint from
    $\gamma_2$. Letting $\delta'$ be the subdisk of $E$ bounded by $\tau'$, the
    same argument applied to $\delta'$, shows that $\tau'$ is disjoint from
    $\gamma_2$ and that $\delta'$ also lies in $M_1$. Let $\delta''$ be the
    disk of $E-D$ contiguous to $\delta$. Then $\delta''$ lies in $M_2$ and
    intersects $\gamma_2$ at most once. Thus $\delta''$ is boundary parallel in
    $M_2$. But this contradicts that each component of $\gamma_2$ is essential
    or the minimality of $E \cap  D$.  Thus we conclude that $\gamma$ is
    $3$--disk-busting in $M$.

    To prove part $(2)$, assume $A$ is a properly embedded annulus in $M$ which
    is disjoint from $\gamma$ and neither compressible nor boundary parallel.
    Isotop $A$ to intersect $D$ minimally and assume that $A$ is not disjoint
    from $D$.  There are no simple closed curves of intersection (each such
        would have to bound a disk in $A$ and an innermost such on $A$ will
    have to give a boundary parallel disk in some $M_i$). Furthermore, an arc
    of $A \cap D$ must run between different boundary components of $A$, else
    surgering $D$ along an outermost disk in $A$ gives, as argued above,
    a trivial disk in some $M_i$ which would imply that some component of
    $\gamma_i$ or the plumbing band in $M_i$ is trivial. Let $\tau$ be an arc
    of $E \cap D$ that is outermost on $D$ and let $\delta$ be the
    corresponding outermost subdisk of $D$.  Then $\boundary$--compressing $A$
    along $\delta$ gives a disk which is disjoint from $A$ and which intersects
    $\gamma$ at most twice. Since $\gamma$ is $3$--disk-busting, this disk must
    be boundary parallel in $M$. But this implies that $A$ is either
    compressible or boundary parallel.

    To see that $\gamma$ is annulus-busting in $M$, note that by $(1)$ it is
    disk-busting. Now let $A$ be an annulus disjoint from $\gamma$ in $M$ and
    assume it is incompressible and not boundary parallel. By $(2)$ it can be
    isotoped to be disjoint from the decomposing disk $D$.  But this contradicts
    the annulus-busting assumption on each $\gamma_i$.
\end{proof}

\begin{lemma}\label{lem:constructingbusting}
    For a genus $g>0$ handlebody, $H_g$, there is a non-separating \scc
    $\eta_g$ which is $3$--disk-busting and annulus-busting.
\end{lemma}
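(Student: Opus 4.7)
The plan is to argue by induction on $g$, using the boundary plumbing construction together with Lemma~\ref{lem:3diskbusting} for the inductive step.

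For the base case $g=1$, the handlebody $H_1$ is a solid torus with meridian $\mu$ (bounding a meridian disk) and longitude $\lambda$ on $\partial H_1$. I take $\eta_1$ to be a \scc on $\partial H_1$ in the isotopy class $\mu+3\lambda$; since $\gcd(1,3)=1$, this is a non-separating \scc on the torus $\partial H_1$. Every essential disk in $H_1$ is isotopic to the meridian disk bounded by $\mu$, which meets $\eta_1$ geometrically $3$ times, so $\eta_1$ is $3$--disk-busting. Every properly embedded incompressible annulus in a solid torus is boundary parallel, so every annulus disjoint from $\eta_1$ is compressible or boundary parallel, and thus $\eta_1$ is annulus-busting.

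For $g>1$, I would assume inductively that the lemma holds for $H_{g-1}$ with \scc $\eta_{g-1}$, and then form $(H_g,\eta_g)$ as the boundary plumbing of $(H_{g-1},\eta_{g-1})$ with $(H_1,\eta_1)$ (Definition~\ref{def:plumbing}), using plumbing bands $b_{g-1}\subset\partial H_{g-1}$ and $b_1\subset\partial H_1$ whose cores are chosen non-trivial, i.e., not isotopic rel boundary into $\eta_{g-1}$ or $\eta_1$. The plumbing identifies $H_{g-1}$ and $H_1$ along a disk, so the resulting $3$--manifold is the boundary connect sum $H_{g-1}\natural H_1$, which is $H_g$. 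Of the two possible band identifications $h\co b_1\to b_{g-1}$, exactly one yields a connected $\gamma$, and I set $\eta_g:=\gamma$ for that choice. Lemma~\ref{lem:3diskbusting}(1) then gives that $\eta_g$ is $3$--disk-busting in $H_g$, and Lemma~\ref{lem:3diskbusting}(3) gives that $\eta_g$ is annulus-busting.

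To verify that $\eta_g$ is non-separating in $\partial H_g$, observe that $\eta_{g-1}$ being non-separating in $\partial H_{g-1}$ gives a \scc $c\subset\partial H_{g-1}$ meeting $\eta_{g-1}$ transversely in exactly one point; after pushing $c$ off the plumbing band, it survives in $\partial H_g$ as a \scc still meeting $\eta_g$ algebraically once, so $\eta_g$ is non-separating. The main technical obstacle in the argument is the bookkeeping of the plumbing data: one must arrange the bands so that their cores are non-trivial (a hypothesis of Lemma~\ref{lem:3diskbusting}) and pick the band identification so that $\gamma$ has exactly one component. Both are routine to arrange by inspecting how the endpoints of $\eta_{g-1}\cap b_{g-1}$ and $\eta_1\cap b_1$ glue, but merit explicit verification.
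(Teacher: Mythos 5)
Your base case agrees with the paper's, and the overall strategy (induct via boundary plumbing and Lemma~\ref{lem:3diskbusting}, then detect non-separation with a dual curve pushed off the band) is the right one. The gap is in the connectivity of the plumbed curve, and the mechanism you propose for fixing it does not work. The curve $\gamma=\closure{(\eta_{g-1}-b_{g-1})\cup(\eta_1-b_1)}$ consists of four arcs joined at the four corners of the plumbing square, so whether it is one circle or two is decided by the two perfect matchings of the corners induced by the complementary arcs of $\eta_{g-1}$ and of $\eta_1$. Changing the identification $h$ amounts to precomposing with a symmetry of the square that preserves the pair of vertical sides and the pair of horizontal sides, and every such symmetry preserves each of the three perfect matchings of the four corners; hence the number of components of $\gamma$ is determined entirely by how the bands attach to the curves and is \emph{independent} of the choice of $h$. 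Worse, at your first inductive step ($g=2$, plumbing two solid tori) the construction provably fails for every admissible choice: on a torus the complement of an essential \scc is an annulus, so a non-trivial band must run from one side of $\eta_1$ to the other, and such a band always induces the ``diagonal'' corner matching; plumbing two pairs whose matchings are both diagonal always yields a two-component curve.

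This is precisely what the paper's extra device is for: instead of plumbing $(H_1,\eta_1)$ to $(H_{g-1},\eta_{g-1})$, the paper plumbs the connected pair to $(H_1,\eta)$ where $\eta$ is \emph{two} parallel copies of $\eta_1$ and the band $b$ joins the two components of $\eta$. A band joining distinct components induces the matching that pairs the two corners of each foot with each other, which can never coincide with the diagonal matching coming from the other factor, so the plumbed curve is automatically connected; and Lemma~\ref{lem:3diskbusting} still applies because it is stated for collections of curves and two parallel copies of $\eta_1$ remain $3$--disk-busting and annulus-busting in the solid torus. To repair your argument you would need to import this two-component factor (or, for the steps with $g\geq 3$ only, choose a non-trivial band on the higher-genus factor that attaches to $\eta_{g-1}$ from a single side), rather than appeal to the choice of $h$.
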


\begin{proof}
    We prove the Lemma by induction on $g$.

    Let $H_1$ be a solid torus and $\eta_1$ be a \scc in $\partial H_1$ with
    winding number $3$ in $H_1$. Then $\eta_1$ is clearly $3$--disk-busting and
    annulus-busting in $H_1$ (an incompressible annulus in a solid torus is
    boundary parallel). This proves the Lemma when $g=1$.  Let $b_1 \subset
    \partial H_1$ be a band running between opposite sides of $\eta_1$.

    Let $\eta$ be two copies of $\eta_1$ in $H_1$. Then $\eta$ is also
    $3$--disk-busting and annulus busting in $H_1$. Let $b$ be a band
    connecting the two components of $\eta$. Let $(H_2,\eta_2)$ be the boundary
    plumbing of $(H_1,\eta_1)$ and $(H_1,\eta)$ gotten by identifying $b_1$ and
    $b$.  By Lemma~\ref{lem:3diskbusting}, $(H_2,\eta_2)$ is $3$--disk-busting
    and annulus-busting.  $H_2$ is a genus $2$ handlebody and $\eta_2$ is
    connected and non-separating ($b_1$ can be thought of as gotten from
    a curve $c_1$ intersecting $\eta_1$ once in $\partial H_1$, this curve
    perturbed slightly shows $\eta_2$ to be non-separating). Thus we have verified
    the Lemma when $g=2$.

    Assume $\eta_g$ in $H_g$ satisfies the Lemma, for some $g>1$.  Let $c_g$ be
    a \scc in $\partial H_g$ intersecting $\eta_g$ once. A neighborhood of
    $c_g$ broken at $\eta_g$ becomes an embedded band $b_g$  that runs from one
    side of $\eta_g$ to the other. Let $(H_{g+1},\eta_{g+1})$ be the boundary
    plumbing of $(H_1,\eta)$ to $(H_g,\eta)$ along the bands $b$ and $b_g$.  By
    Lemma~\ref{lem:3diskbusting}, $(H_{g+1},\eta_{g+1})$ is $3$--disk-busting
    and annulus-busting.  $H_{g+1}$ is a genus $g+1$ handlebody and
    $\eta_{g+1}$ is connected and non-separating (as above, use a perturbed
    $c_g$ to verify that $\eta_{g+1}$ is non-separating).  This verifies the
    Lemma for $g+1$ and the induction.  \end{proof}

\subsection{Another $3$--disk-busting and annulus-busting curve.}

Figure~\ref{fig:3diskbusting}(left) shows a curve $\gamma_2$ in the boundary of
a genus $2$ handlebody $H_2$ that is disjoint from the curves $a(2,*,0),
b(2,*,0)$ in each of Figure~\ref{fig:typesofpants}(right) and (left).   We show
in Lemma~\ref{lem:ourcurveisbusting} that this curve $\gamma_2$ is both
$3$--disk-busting and annulus-busting.  In section~\ref{sec:defKtau}, for
$g\geq3$ we will boundary plumb $(H_2,\gamma_2)$  to $(H_{g-2},\eta_{g-2})$
along the bands $b_\gamma$ (also shown in Figure~\ref{fig:typesofpants}) and
$b_{g-2}$ as in the construction of Lemma~\ref{lem:constructingbusting} to
produce a new non-separating $3$--disk-busting and annulus-busting curve
$\gamma_g$ in the genus $g$ handlebody $H_g$ that is disjoint from curves
$a(g,*,0), b(g,*,0)$ induced from $a(2,*,0), b(2,*,0)$.

\begin{figure}[h!tb]
  \begin{center}
\includegraphics[width=0.95\textwidth]{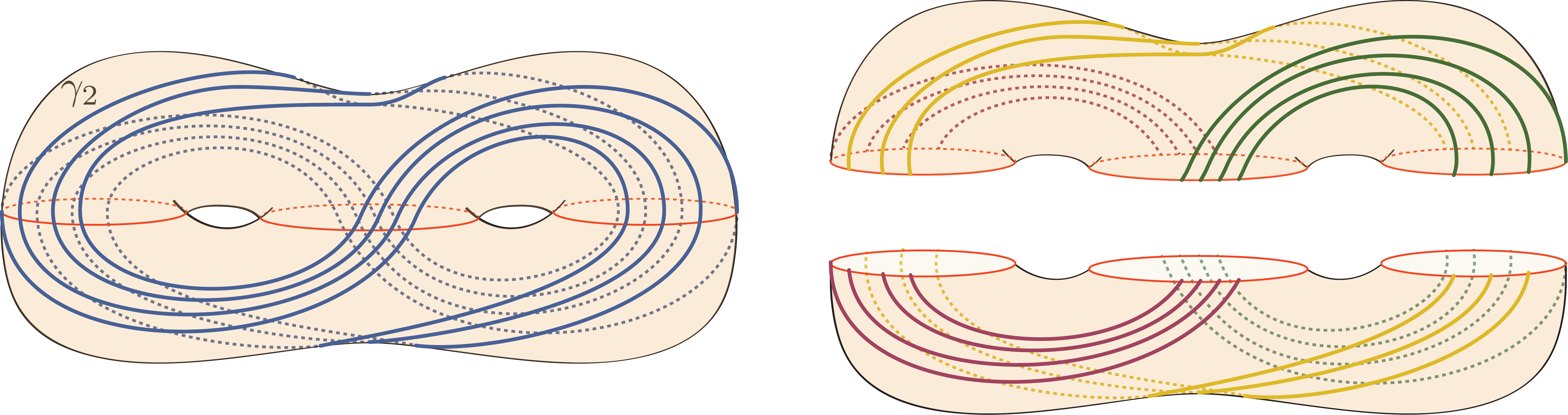}
  \end{center}
  \caption{The curve $\gamma_2$ in the boundary of the genus $2$ handlebody is
  shown with a compatible pants decomposition $\Pcal$ (left).   The handlebody
  is divided along $\Pcal$, and the arcs of intersection of $\gamma_2$ with
  each pair of pants is colored according to isotopy classes.}
  \label{fig:3diskbusting}
\end{figure}

\begin{figure}[h!tb]
  \begin{center}
\includegraphics[width=0.95\textwidth]{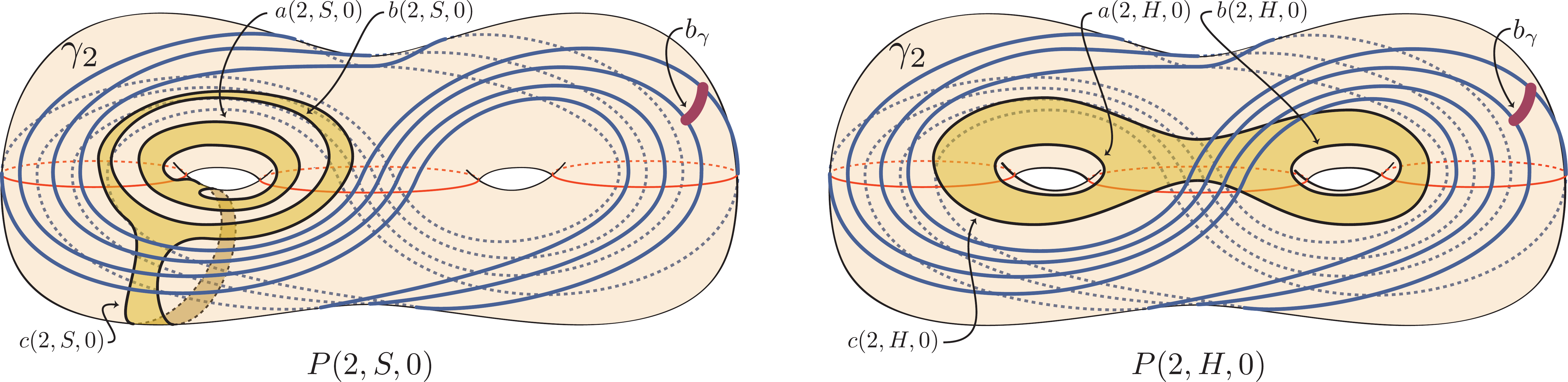}
  \end{center}
  \caption{Seifert type and handlebody type pairs of pants in the boundary of
  $H_2$ along with the curve $\gamma_2$.   A non-trivial band $b_\gamma$
  connecting $\gamma_2$ from just one side is also shown.}
  \label{fig:typesofpants}
\end{figure}

\begin{lemma}\label{lem:handleaddition}
Let $\gamma$ be a simple closed curve in the boundary of a genus $2$ handlebody
$H$.  If $\gamma$ is disk-busting but not annulus-busting then $H[\gamma]$
contains an essential annulus.
\end{lemma}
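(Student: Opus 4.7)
Since $\gamma$ is not annulus-busting, fix an essential annulus $A$ properly embedded in $H$ and disjoint from $\gamma$. The plan is to show that $A$, viewed as an annulus in $H[\gamma]$, remains essential there; this $A$ will then be the sought essential annulus. A preliminary observation I would record first is that disk-busting of $\gamma$ is equivalent to $\partial H \setminus \gamma$ being incompressible in $H$, and from this plus irreducibility of $H$ a standard innermost-curve argument on the core disk of the 2-handle shows that $H[\gamma]$ is itself irreducible.

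For incompressibility of $A$ in $H[\gamma]$: Suppose for contradiction that $D \subset H[\gamma]$ is a compressing disk for $A$. View the closed surface $\partial H$ as embedded in $H[\gamma]$ and separating $H$ from the attached 2-handle $B \cong D^2 \times I$. Isotope $D$ to minimize $|D \cap \partial H|$; since $\partial D \subset A \subset \operatorname{int} H$, this intersection is a disjoint union of circles. If $D$ is disjoint from $\partial H$, then $D \subset H$, contradicting essentiality of $A$ in $H$. Otherwise take an innermost circle $c$ of $D \cap \partial H$ on $D$, cutting off a subdisk $D' \subset D$ on one side of $\partial H$. Trivial circles in $\partial H$ are removed by irreducibility of $H$ and of $H[\gamma]$, contradicting minimality. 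If $c$ is essential in $\partial H$ with $D' \subset H$ and $c \cap \gamma = \emptyset$, then $D'$ would compress $\partial H \setminus \gamma$ in $H$, contradicting disk-busting. In the remaining cases, $D' \subset H$ with $c$ meeting $\gamma$, or $D' \subset B$ with $c$ essential in the attaching annulus $A_\gamma$ (hence isotopic to $\gamma$), one exhibits by banding $D'$ with appropriate sub-disks of $\partial H$ or of the core disk $\Delta$ of the 2-handle, together with the annulus $A_{\gamma}$, an essential disk of $H$ meeting $\gamma$ in at most one point; combined with Lemma~\ref{lem:1-disk-busting} this contradicts $\gamma$ being disk-busting.

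For non-$\partial$-parallelism of $A$ in $H[\gamma]$: Suppose $A$ cobounds with an annulus $A' \subset \partial H[\gamma]$ a parallelism region $W \cong A \times I$. Since $\partial W = A \cup A'$ is disjoint from $\gamma$ (as $A$ is disjoint from $\gamma$ and $\gamma$ is interior to $H[\gamma]$), the 2-handle $B$ is either entirely disjoint from $W$ or entirely contained in it. In the disjoint case, $W \subset H$, so $A$ is $\partial$-parallel in $H$, contradicting essentiality. In the contained case, the core disk $\Delta$ lies in the solid torus $W$, and a meridian disk of $W$ pushed across $\Delta$ together with portions of $A' \cap \partial H$ produces an essential disk of $H$ missing $\gamma$, again contradicting disk-busting.

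The most delicate point will be the case analysis in the incompressibility step, specifically converting the configuration where the innermost subdisk $D'$ has boundary essential in $\partial H$ but meeting $\gamma$ into a genuine essential disk of $H$ meeting $\gamma$ at most once. Tracking how the relevant surgeries redistribute arcs of $D \cap \gamma$ and ensuring the output disk is essential requires careful bookkeeping. An alternative I would fall back on is to work directly with $\Delta$ via outermost-arc surgeries on $D \cap \Delta$ (whose arcs have endpoints on $\gamma$), which cleanly avoids $\partial H$ but demands separately producing a disk of $H$ meeting $\gamma$ at most once from the outermost surgered pieces.
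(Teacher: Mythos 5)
Your top-level strategy agrees with the paper's: take the essential annulus $A\subset H$ disjoint from $\gamma$ witnessing the failure of annulus-busting and show it stays essential in $H[\gamma]$. But your execution of the incompressibility step has a genuine gap. When you minimize the intersection of a putative compressing disk $D$ with the frontier of $H$ in $H[\gamma]$ (which is really just the attaching annulus $A_\gamma$, since the rest of $\partial H$ survives into $\partial H[\gamma]$), the innermost-circle argument does not terminate in a contradiction: the unavoidable residual case is an innermost circle that is a core of $A_\gamma$ bounding a subdisk $D'\subset D$ lying in the $2$--handle, i.e.\ $D'$ is simply a meridian disk of the $2$--handle. That is the generic picture for any disk of $H[\gamma]$ that genuinely uses the $2$--handle, and it is not removable. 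What you are left with is the component of $D\cap H$ containing $\partial D$, a planar surface bounded by a core of $A$ together with several copies of $\gamma$; no banding of $D'$ with subdisks of $\Delta$ or of $\partial H$ produces from this an essential disk of $H$ meeting $\gamma$ at most once. Turning such a planar surface into a contradiction with disk-busting is exactly the content of the Handle Addition Lemma, which needs a genuinely different (combinatorial) argument, not cut-and-paste. Your proposed fallback is also vacuous here: since $\partial D$ lies on $A$, which is disjoint from the attaching region, $D\cap\Delta$ consists only of circles, so there are no outermost arcs with endpoints on $\gamma$ to surger along.

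The paper sidesteps all of this. It quotes the Handle Addition Lemma \cite[Lemma 2.1.1]{CGLS} to get that $H[\gamma]$ is both irreducible \emph{and} boundary-irreducible; you only extract irreducibility, and the boundary-irreducibility is the key missing input. From it, if $A$ compressed in $H[\gamma]$ then both components of $\partial A$ would bound disks in $H[\gamma]$, hence be trivial in $\partial H[\gamma]$, forcing $A$ to be separating in $H$. The paper then uses the genus-two hypothesis in an essential way (which your argument never invokes): a separating essential annulus in a genus $2$ handlebody splits it into a solid torus, around which $A$ winds at least twice, and a genus $2$ handlebody $H'$; the winding number places $\gamma$ in $\partial H'$ and rules out compressibility of $A$ in $H[\gamma]$ via a lens space summand, while $\partial$--parallelism of $A$ in $H[\gamma]$ would make $H[\gamma]$ a solid torus, contradicting its boundary-irreducibility. (Your treatment of the $\partial$--parallelism case, by locating the $2$--handle inside or outside the parallelism region, is roughly salvageable; it is the compressibility case that fails as written.)
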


\begin{proof}
    Assume $A$ is a properly embedded essential annulus in $H$ that is disjoint
    from $\gamma$ and becomes inessential in $H[\gamma]$.  Because $\gamma$ is
    disk-busting, the Handle Addition Lemma \cite[Lemma 2.1.1]{CGLS}, shows
    that $H[\gamma]$ is irreducible and boundary-irreducible.

    First we show that $A$ must be separating in $H$. If $A$ were compressible
    in $H[\gamma]$, each component of $\partial A$ would have to be trivial in
    $\partial H[\gamma]$.  This would then imply that $A$ is separating in $H$,
    since the boundary components of a non-separating annulus in $H$ are
    individually and jointly non-separating.   On the other hand, if $A$ is
    boundary-parallel in $H[\gamma]$, $A$ must be separating in $H$. Thus $A$
    is separating.

    A $\bdry$--compression of $A$ in $H$ shows that $A$ may be viewed as the
    result of banding an essential separating disk to itself on one side.  Thus
    $A$ divides $H$ into a solid torus and another genus $2$ handlebody $H'$.
    Since $A$ is not boundary-parallel in $H$, it must wind around this solid
    torus more than once.    So if $\gamma$ were on the solid torus side of
    $A$, then $H[\gamma]$ would contain a lens space summand; but this cannot
    occur since $H[\gamma]$ is irreducible. Thus $\gamma$ must lie in $\bdry
    H'$. Then  irreducibility similarly shows that $A$ cannot be compressible
    in $H[\gamma]$.    So $A$ must be boundary-parallel in $H[\gamma]$. That
    is, $H'[\gamma]$ must become a solid torus in which $A$ winds once.  Hence
    $H[\gamma]$ is also a solid torus.  But $H[\gamma]$ is boundary-irreducible.
\end{proof}

\begin{lemma}\label{lem:hypmakesbusting}
    Let $\gamma$ be an essential simple closed curve in the boundary of a genus
    $2$ handlebody $H$.  If (the interior of) $H[\gamma]$ is a hyperbolic
    $3$--manifold, then $\gamma$ is disk-busting and annulus-busting.
\end{lemma}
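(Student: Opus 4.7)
The plan is to argue by contrapositive, using the standard fact that a finite-volume hyperbolic $3$--manifold is irreducible, boundary-irreducible, atoroidal, anannular, and has torsion-free fundamental group. Granted that $H[\gamma]$ is hyperbolic, I must deduce that $\gamma$ is both disk-busting and annulus-busting. The annulus-busting conclusion is quick: once disk-bustingness is established, Lemma~\ref{lem:handleaddition} shows that failure of annulus-bustingness would produce an essential annulus in $H[\gamma]$, contradicting anannularity.

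The main step is disk-bustingness. Suppose $\gamma$ is not disk-busting; then there is an essential disk $D\subset H$ with $\partial D\cap\gamma=\emptyset$. View $D$ as properly embedded in $H[\gamma]$. By boundary-irreducibility of $H[\gamma]$, the circle $\partial D$ bounds a disk $E\subset \partial H[\gamma] = (\partial H\setminus N(\gamma))\cup D_1\cup D_2$, where $D_1, D_2$ are the cocores of the attached $2$--handle. I would case-split on how many of $D_1,D_2$ lie inside $E$. If neither, then $E\subset\partial H$, so $\partial D$ already bounds a disk in $\partial H$, contradicting essentiality of $D$ in $H$. If exactly one (say $D_1$) lies in $E$, then $E\setminus D_1$ is an annulus in $\partial H$ between $\partial D$ and a parallel copy of $\gamma$, so $\gamma$ itself bounds a disk $D'$ in $H$; attaching a $2$--handle along $\gamma$ then either cancels a $1$--handle (when $D'$ is non-separating), making $H[\gamma]$ a solid torus, or creates an essential $2$--sphere $D'\cup(\text{cocore})$ (when $D'$ is separating, as seen by a quick Mayer--Vietoris computation giving $H_2(H[\gamma])\neq 0$), making $H[\gamma]$ reducible. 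If both $D_1,D_2$ lie in $E$, then $D\cup E$ is a sphere $S$ in $H[\gamma]$; if $S$ is non-separating then $H[\gamma]$ is reducible, so by irreducibility $S$ bounds a ball $B$, which must contain the $2$--handle (otherwise $B\subset H$ forces $\partial B\cap D_i=\emptyset$). Removing the $2$--handle from $B$ produces a solid torus $V\subset H$ with meridian $D$, exhibiting $H = V_1\natural V_2$ as a boundary-sum of two solid tori with $\gamma\subset\partial V_1$, and hence $H[\gamma] = V_1[\gamma]\natural V_2$. Since attaching a $2$--handle to a solid torus along a $(p,q)$--curve yields $L(p,q)\setminus B^3$ (or $S^3\setminus B^3$, or $(S^1\times S^2)\setminus B^3$), van Kampen gives $\pi_1(H[\gamma])=\pi_1(V_1[\gamma])\ast\mathbb{Z}$, which either contains torsion $\mathbb{Z}/p$, or forces a reducing $S^2$ from the $S^1\times S^2$ factor, or collapses $H[\gamma]$ to a solid torus --- all incompatible with hyperbolicity.

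The main obstacle is the last subcase: one must correctly identify $V\subset H$ as a summand of a boundary-sum decomposition of the genus-$2$ handlebody and correctly compute the topology of $V_1[\gamma]$ via the standard Dehn-filling picture. Everything else is routine case analysis on how $\partial D$ sits relative to the $2$--handle scars, together with direct invocation of Lemma~\ref{lem:handleaddition} for the annulus-busting conclusion.
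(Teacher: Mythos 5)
Your overall strategy matches the paper's: prove disk-busting by contradiction using irreducibility and $\partial$--irreducibility of the hyperbolic $H[\gamma]$, then quote Lemma~\ref{lem:handleaddition} together with anannularity to get annulus-busting; the annulus half is exactly the paper's argument. For the disk half, however, one of your sub-cases contains a false claim: when $\partial D$ is isotopic to $\gamma$ and $\gamma$ bounds a \emph{non-separating} disk $D'$ in $H$, attaching the $2$--handle does \emph{not} cancel a $1$--handle and does not produce a solid torus. Cancellation would require the attaching curve to meet the belt sphere of the $1$--handle geometrically once, i.e.\ $\gamma$ would have to be primitive; here $\gamma$ \emph{is} (parallel to) the belt sphere, so the intersection number is zero. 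What actually happens is that $D'$ together with the core of the $2$--handle is a non-separating $2$--sphere, so $H[\gamma]$ is reducible (it acquires an $S^1\times S^2$ summand). The contradiction with hyperbolicity survives, but via reducibility, not via a solid torus. A smaller imprecision: in your final case, ``removing the $2$--handle from $B$ produces a solid torus'' is not automatic as stated (a priori the cocore arc of the $2$--handle could be knotted in the ball $B$); the correct justification is that $B\cap H$ is one of the two complementary pieces of the genus $2$ handlebody $H$ cut along the separating essential disk $D$, hence a solid torus.

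More globally, the entire case analysis can be bypassed, and this is what the paper does. If $\gamma$ is not disk-busting there is an essential disk of $H$ disjoint from $\gamma$, and one may always take it to be \emph{non-separating}: if it is separating, it splits $H$ into two solid tori, $\gamma$ lies on the boundary of one of them, and the meridian disk of the other is a non-separating essential disk disjoint from $\gamma$. A non-separating properly embedded disk in $H$ remains a non-separating disk in $H[\gamma]$, and any compact $3$--manifold containing such a disk is either $\partial$--reducible (if the disk's boundary is essential in $\partial H[\gamma]$) or reducible (cap it off to a non-separating sphere) --- either way not hyperbolic. This renders the discussion of how $\partial D$ sits relative to the scars of the $2$--handle unnecessary.
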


\begin{proof}
    Assume the interior of $H[\gamma]$ is hyperbolic.  Then it contains no
    properly embedded essential disks or annuli.  If $\gamma$ were not
    disk-busting, then there would be a non-separating disk in $H$ disjoint
    from $\gamma$ that would become a non-separating disk in $H[\gamma]$.  Thus
    $\gamma$ is disk-busting.  Lemma~\ref{lem:handleaddition} then implies that
    $\gamma$ must also be annulus-busting.
\end{proof}

\begin{lemma}\label{lem:ourcurveisbusting}
    The curve $\gamma_2$ in the boundary of the genus $2$ handlebody $H$ as
    shown in Figure~\ref{fig:3diskbusting}(left)  is $3$--disk-busting and
    annulus-busting.
\end{lemma}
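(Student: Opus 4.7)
The plan is to establish the two conclusions separately, using the machinery built earlier in this section.

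For $3$-disk-bustingness, I would apply Lemma~\ref{lem:seamedisbusting} to the pants decomposition $\Pcal$ of $\bdry H$ shown in Figure~\ref{fig:3diskbusting}(left). First verify that $\Pcal$ is compatible with $H$, i.e.\ every component of $\Pcal$ bounds a disk in $H$; this is apparent from the figure. Then check by direct inspection, pair of pants by pair of pants, that $\gamma_2$ is $3$-seamed with respect to $\Pcal$: in each pair of pants comprising $\bdry H \setminus \Pcal$, each of the three seam isotopy classes is realized by at least three arcs of $\gamma_2$. This is precisely the finite enumeration that the color-coded display in Figure~\ref{fig:3diskbusting} is designed to facilitate. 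Lemma~\ref{lem:seamedisbusting} then yields $3$-disk-bustingness.

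For annulus-bustingness, I would invoke Lemma~\ref{lem:hypmakesbusting}: it suffices to certify that the interior of $H[\gamma_2]$ admits a complete hyperbolic structure. The plan is to present $H[\gamma_2]$ as an explicit cusped $3$-manifold with torus boundary (note that $\gamma_2$ is non-separating by inspection) via a concrete triangulation, and then verify the hyperbolic structure using a rigorous solver such as SnapPy's verified computation routines. Because $\gamma_2$ is a single fixed curve drawn in Figure~\ref{fig:3diskbusting}(left), the verification reduces to a finite, machine-checkable task. Once hyperbolicity is certified, Lemma~\ref{lem:hypmakesbusting} delivers annulus-bustingness (and re-delivers disk-bustingness, though not the $3$-disk-busting strengthening).

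The main obstacle is the hyperbolicity verification, not because the computation is difficult in any individual case but because it is the one step that is not transparent from the picture alone. A purely topological alternative, should one wish to avoid a computer-assisted argument, is to combine the just-established $3$-disk-busting property with the analysis in the proof of Lemma~\ref{lem:handleaddition}: any essential annulus in $H[\gamma_2]$ would, via $\bdry$-compression, split $H$ into a solid torus and a genus $2$ handlebody with $\gamma_2$ lying on the handlebody side, and the way the arcs of $\gamma_2$ thread through the pants decomposition $\Pcal$ can be used to rule out the resulting restrictive topological configuration (in particular, the forced conclusion that $H[\gamma_2]$ is a solid torus). Either route suffices, and together with the seamedness argument above, completes the proof.
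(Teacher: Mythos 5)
Your proposal follows essentially the same route as the paper: the $3$--disk-busting claim via $3$--seamedness and Lemma~\ref{lem:seamedisbusting}, and the annulus-busting claim via hyperbolicity of $H[\gamma_2]$ and Lemma~\ref{lem:hypmakesbusting}. The only difference is in certifying hyperbolicity: the paper identifies $H[\gamma_2]$ as the exterior of the $5_2$ knot (Figure~\ref{fig:hyperbolic-handle}), a known hyperbolic manifold, rather than running a verified computation, but either certification suffices.
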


\begin{proof}
    Figure~\ref{fig:3diskbusting}(right) demonstrates that $\gamma_2$ is
    $3$--seamed with respect to a compatible pair of pants decomposition of
    $H$.  By Lemma~\ref{lem:seamedisbusting}, this means $\gamma_2$ is
    $3$--disk-busting.  Figure~\ref{fig:hyperbolic-handle} shows that
    $H[\gamma_2]$ is the exterior of the $5_2$ knot.  Since this manifold is
    hyperbolic, Lemma~\ref{lem:hypmakesbusting} implies that $\gamma_2$ is
    annulus-busting.
\end{proof}

\begin{figure}[h!tb]
  \begin{center}
      \includegraphics[width=0.95\textwidth]{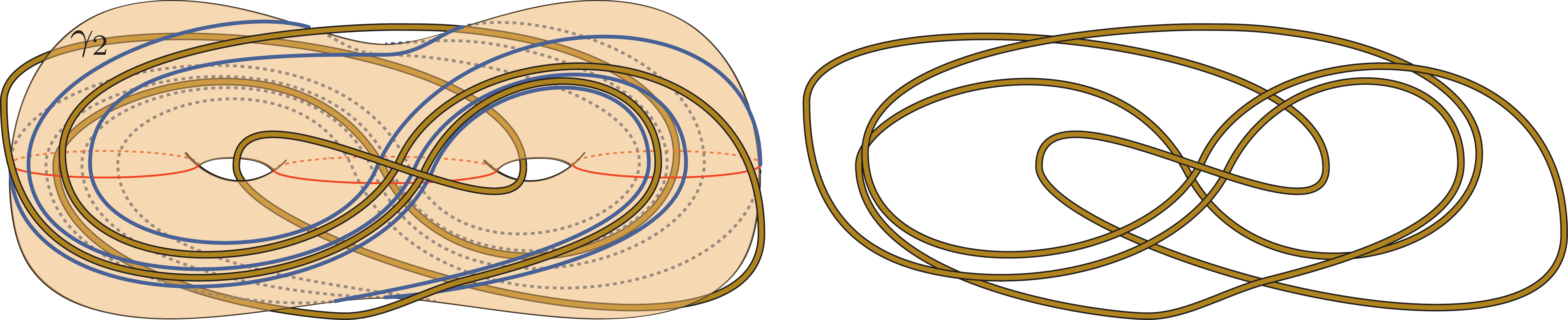}
  \end{center}
  \caption{Attaching a $2$--handle to the genus 2 handlebody $H_2$ along the
  curve $\gamma_2$ produces the exterior of the  $5_2$ knot, a hyperbolic
  manifold.  Shown (with and without $H_2$) is a knot disjoint from the
  standardly embedded $H_2$ and from a $2$--handle attached to $\gamma_2$.  One
  easily confirms this  is the $5_2$ knot.}
  \label{fig:hyperbolic-handle}
\end{figure}

\subsection{Disk and annulus hitting numbers}

Here we quantify the extent to which a knot is disk-busting or annulus-busting.

\begin{definition}{\em (Disk Hitting Number)}\label{def:wrappingno}
    Let $M$  be a compact, connected, orientable manifold with boundary.  Let
    $\mathcal{D}$ be the \defn{disk set} of $M$, i.e.\ the set of  properly
    embedded disks in $M$ that are not boundary-parallel.
    Assuming $\mathcal{D} \neq \emptyset$, the \defn{disk hitting number of $K$
    in $M$}, $\gwrap_D(K)$, is the smallest number of times $K$ transversely
    intersects an essential disk in $M$.  That is, \[ \gwrap_D(K) = \min_{D\in
    \mathcal{D}}\,|K \cap D|. \]
\end{definition}

The disk hitting number is analogous to the wrapping number  of a link in
a solid torus.  In a similar way, one can also define a hitting number for
a knot in a $3$--manifold relative to other homeomorphism types of surfaces.
For our purposes, we only need the annular case.

\begin{definition}{\em (Annulus Hitting Number)}\label{def:annhitno}
    Let $M$ be a compact, connected, orientable manifold with boundary.  Let
    $\mathcal{A}$ be the \defn{annulus set} of $M$, i.e.\ the set of
    incompressible, properly embedded annuli in $M$ that are not boundary-parallel.  
    Assuming $\mathcal{A} \neq \emptyset$, the \defn{annular hitting
    number of $K$ in $M$}, $\gwrap_{A}(K)$, is the smallest number of times
    that $K$ transversely intersects an essential annulus in $M$.  That is, \[
    \gwrap_{A}(K) = \min_{A\in \mathcal{A}}\, |K \cap A|. \]
\end{definition}

\subsection{Handlebody, knot pairs with large hitting numbers}
\label{sec:defKtau}

The goal of this section is Proposition~\ref{prop:wrappingno} which gives
infinitely many knots in a genus $g$ handlebody admitting non-trivial
handlebody or Seifert-type Dehn surgeries, distinguished by their hitting
numbers.  Proposition~\ref{prop:strongandhyp} shows that generically the
exteriors of these knots are irreducible, $\partial$--irreducible, atoroidal,
and anannular.

Figure~\ref{fig:typesofpants} depicts two pairs of a genus $2$ handlebody $H_2$
with a pair of pants in the boundary of $H_2$; on the left the pair $(H_2,
P(2,S,0))$ is of Seifert type and on right the pair $(H_2,P(2,H,0))$ is of
handlebody type (see Definitions~\ref{def:handlebodytype} and
~\ref{def:seiferttype}).  Also shown in each is a curve $\gamma_2 \subset \bdry
H_2$ that is $3$--disk-busting and annulus-busting according to
Lemma~\ref{lem:ourcurveisbusting}.

\bigskip

\begin{definition}[{\em Of $\gamma_g$, $P(g,*,i)$, and $M(g,*,i)$}]\label{def:gammag}
    Let $b_\gamma$ be the band connecting $\gamma_2$ to the same side of itself
    in $\bdry H_2$ as shown in Figure~\ref{fig:typesofpants}.  Then, for $g
    \geq 3$, let $\gamma_g$ be the curve in the boundary of the genus $g$
    handlebody $H_g$ obtained by boundary plumbing $(H_{g-2}, \eta_{g-2})$ and
    $(H_2, \gamma_2)$ together along the bands $b_{g-2}$ and $b_\gamma$.   (The
    triple $(H_{g-2},\eta_{g-2},b_{g-2})$ are constructed in
    Lemma~\ref{lem:constructingbusting}.) By Lemma~\ref{lem:3diskbusting}
    $\gamma_g$ is $3$--disk-busting and annulus-busting.

    Since $b_\gamma$ is disjoint from $P(2,*,0)$ for $* \in \{S,H\}$, this
    boundary plumbing induces the pair of pants $P(g,*,0)$ in the boundary of
    $H_g$.  For each integer $i$, let $P(g,*,i)$ be the pair of pants obtained
    from $P(g,*,0)$ by $i$ positive Dehn twists along $\gamma_g$. $P(g,*,i)$
    has two boundary components which are disjoint from $\gamma_g$, label these
    $a(g,*,i),b(g,*,i)$. Label the third component $c(g,*,i)$.  Then
    $a(g,S,i),b(g,S,i)$ are parallel and primitive in $H_g$;
    $a(g,H,i),b(g,H,i)$ are jointly primitive in $H_g$.  Furthermore,
    $c(g,*,i)$ is gotten from $c(g,*,0)$ by Dehn-twisting along $\gamma_g$ in
    $\bdry H_g$.

    Recall from Section~\ref{section:little-handlebody} the identifications of
    the genus $2$ handlebody $H$ as a product $T \times [-1,1]$ for
    a once-punctured torus $T$ and also as a product $P \times [-1,1]$ for
    a pair of pants $P$. The boundary components of $P$ are labelled
    $\bdry_{-}P, \bdry_{+}P, \bdry_0P$.  Recall that  $K(\tau)$ is a knot in $T
    \times \{0\} \subset H = T \times [-1,1]$ obtained from an essential \scc
    $\tau \subset T$.
    Fix a homeomorphism $\phi(*) \colon P \to P(2,*,0)$ that identifies
    $\bdry_{-}P, \bdry_{+}P$ with $a(2,*,0), b(2,*,0)$  in $P(2,*,0)$.  Let
    $\phi(g,*,i)\colon P \to P(g,*,i)$ be the homeomorphism gotten by $\phi(*)$
    followed by twisting along $\gamma_g$. To $(H_g,P(g,*,i))$ constructed
    above, attach $H$ using $\phi(g,*,i)$ to identify $P \times \{-1\}$ with
    $P(g,*,i)$.  Call the resulting genus $g$ handlebody $M(g,*,i)$. Let
    $K(\tau,g,*,i)$ be the knot in $M(g,*,i)$ that is the image of $K(\tau)
    \subset H$ under inclusion.
\end{definition}
\bigskip

We will apply Theorem~\ref{thm:essentialsurfaces}  to show
that for fixed $\tau (\neq \mu,\lambda)$ and $g\geq 2$
\begin{enumerate}
    \item the disk hitting numbers of the knots
        $K(\tau,g,*,i) \subseteq M(g,*,i)$ go to infinity with $i$, and
    \item the annular hitting numbers of the knots
        $K(\tau,g,*,i) \subseteq M(g,*,i)$ go to infinity with $i$.
\end{enumerate}

To do this, let $\Sigma\times [0,1]$ be a collar neighborhood of $\boundary
M(g,*,0)$, so that $\Sigma\times\set{0}=\boundary M(g,*,0)$.  Let $J_g$ be the
curve $\gamma_g\times\set{1}\subseteq\Sigma\times [0,1]$.  There is an annulus
that runs from $J_g$ to $\bdry M(g,*,0)$, $\Uhat_g=\gamma_g\times [0,1]$.  We
identify $H\cong P\times I$ in $\Sigma\times [0,1]$ as $P(g,*,0)\times [0,
1/2]$.  Then the knots $K(\tau,g,*,i)\subseteq M(g,*,i)$ are gotten from the
knots  $K(\tau,g,*,0)\subseteq M(g,*,0)$ by twisting $i$ times along the
annulus $\Uhat_g$ (under the identification $M(g,*,i) \cong H_g \cong
M(g,*,0)$).   This twisting is basically the same as the construction described
in \cite{BGL} and discussed in Definition~\ref{defn:twisting} except that one
boundary component of the annulus $\Uhat_g$ is in the boundary of the
$3$--manifold $M(g,*,0)$. Specifically,

\begin{definition}\label{def:twistingalongU}
    Let $N$ be a submanifold of $M(g,*,0)$ that is disjoint from $J_g$. Let
    $\Uhat_g \times [0,1]$ be a product neighborhood of $\Uhat_g$ in
    $M(g,*,0)$.  Let $h_i:\Uhat_g \times [0,1] \to \Uhat_g \times [0,1]$ be
    a homeomorphism gotten by $i$ complete twists along $\Uhat_g$, where $h_i$
    is the identity on $\Uhat \times \{0,1\}$. Define the submanifold {\em
    N twisted $i$ times along $\Uhat_g$} as $[N-(\Uhat_g \times [0,1])] \cup
    h_i(N \cap (\Uhat_g \times [0,1]))$. On the boundary of $M(g,*,0)$, $h_i$
    induces $i$ Dehn twists along $\gamma_g$. The sign of the twist along
    $\Uhat$, $h_i$, is taken so that the induced twist on the boundary is
    a positive Dehn twist along $\tau_g$.
\end{definition}

Above, we are twisting $N=K(\tau,g,*,0)$ along $\Uhat_g$ to get $K(\tau,g,*,i)$.

Our application of Theorem~\ref{thm:essentialsurfaces} in
Proposition~\ref{prop:wrappingno} requires the following constraint on \mobius
bands and essential annuli in the exterior of $J_g$.

\begin{lemma}\label{lem:no-spanning-annulus2}
    Let $(M, K)=(M(g,*,0), K(\tau,g,*,0))$ and $J_g$ be as above.  Then
    $M_{J_g}$ is irreducible and boundary-irreducible. Assume that $\tau$ is
    not parallel to $\mu$ or $\lambda$ in $T$.  Let $T_J$ be the torus
    component of $\bdry M_{J_g}$.  There is no incompressible annulus in
    $M_{J_g}$ disjoint from $K$ with one boundary component on $T_J$ and the
    other on $\bdry M$. Furthermore, there is no \mobius band properly embedded
    in $M_{J_g}$  with boundary on $T_J$.
\end{lemma}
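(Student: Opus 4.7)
Structural setup: since $N(J_g)\subset H_g$, we have $M_{J_g}=H\cup_P(H_g)_{J_g}$, and the annulus $\Uhat_g$ restricts to a parallelism annulus $A_0\subset(H_g)_{J_g}$ with $\partial A_0 = J_g^*\cup \gamma_g$, where $J_g^*\subset T_J$ is a longitude of $J_g$.  By Lemma~\ref{lem:3diskbusting} and Definition~\ref{def:gammag}, $\gamma_g$ is disk-busting and annulus-busting in $H_g$.  I would first verify that $\gamma_g$ (and hence $J_g$) is not null-homotopic in $M$ and not contained in a $3$-ball: a spanning disk for $\gamma_g$ in $M$ could be isotoped off the incompressible $P$ to give an essential disk in $H_g$ disjoint from $\gamma_g$, contradicting disk-busting.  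The irreducibility of $M_{J_g}$ then follows from the irreducibility of the handlebody $M$ (any sphere bounds a ball, which cannot contain $J_g$), and the incompressibility of $T_J$ follows from the standard knot-exterior dichotomy.

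For boundary-irreducibility, suppose $D\subset M_{J_g}$ is a compressing disk with $\partial D\subset\partial M$ essential.  Using outermost-arc and innermost-disk arguments on $D\cap A_0$, and the fact that $A_0$ is incompressible, I would arrange $D\cap A_0=\emptyset$, making $\partial D$ disjoint from $\gamma_g$ on $\partial M$.  Next I would minimize $|D\cap P|$ via standard arguments, using the incompressibility of $P$ in $H\cong P\times I$ (trivial) and in $H_g$ (from primitivity of $a,b$ and non-triviality of $c$ in the free group $\pi_1(H_g)$).  Once $D\cap P=\emptyset$, $D$ lies either in $H$ or in $(H_g)_{J_g}$: in $H$, $\partial D\subset\overline{\partial H\setminus P}$ which is incompressible in $H\cong P\times I$, a contradiction; in $(H_g)_{J_g}$, $D$ descends to an essential disk of $H_g$ disjoint from $\gamma_g$, contradicting disk-busting.

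For the spanning annulus $A'$, an analogous reduction (now using the boundary-irreducibility established above to clean up the $\partial M$--boundary) yields $A'\subset(H_g)_{J_g}$ with $A'\cap A_0=\emptyset$, so $\partial A'\cap T_J$ is parallel to $J_g^*$ on $T_J$.  Banding $A'$ and $A_0$ along an annulus on $T_J$ between these parallel curves and pushing off $T_J$ into $(H_g)_{J_g}$ produces a properly embedded annulus $B\subset H_g$ disjoint from $\gamma_g$, with one boundary a push-off of $\gamma_g$.  Annulus-busting of $\gamma_g$ forces $B$ to be compressible or $\boundary$-parallel in $H_g$: the former yields, after surgery, a compressing disk of $H_g$ bounded by a push-off of $\gamma_g$ (essential on $\partial H_g$ since $\gamma_g$ is), contradicting disk-busting; the latter forces $A'$ to be isotopic to $A_0$, contradicting essentiality of $A'$.

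The \mobius band case is the main obstacle.  Given a properly embedded \mobius band $\mathcal{M}\subset M_{J_g}$ with $\partial\mathcal{M}\subset T_J$, I would consider the frontier $\tilde A$ of an $I$-bundle regular neighborhood of $\mathcal{M}$, a properly embedded annulus in $M_{J_g}$ whose two boundary circles are parallel copies of $\partial\mathcal{M}$ on $T_J$.  Reducing as above to $\tilde A\subset(H_g)_{J_g}$ with $\tilde A\cap A_0=\emptyset$ forces $\partial\tilde A$ to have longitudinal slope on $T_J$; banding $\tilde A$ to two parallel copies of $A_0$ along annular regions of $T_J$ and pushing off produces an annulus $B'\subset H_g$ disjoint from $\gamma_g$, with both boundaries push-offs of $\gamma_g$.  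Applying annulus-busting as in the previous case yields either a compressing disk of $H_g$ disjoint from $\gamma_g$ (contradicting disk-busting) or forces $B'$ to be $\boundary$-parallel, in which case the region cobounded by $B'$ and $\partial H_g$ in $H_g$ provides an embedded $3$-ball containing $\mathcal{M}$---impossible since a non-orientable surface cannot embed in a $3$-ball.
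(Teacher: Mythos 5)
Your treatment of irreducibility and $\bdry$--irreducibility is consistent with the paper (which asserts these in one line from the disk-busting property of $\gamma_g$), but the two substantive conclusions are not established, and the spanning-annulus case contains the critical gap: your argument never uses the hypotheses that the annulus is disjoint from $K$ and that $\tau$ is not parallel to $\mu$ or $\lambda$. A proof ignoring these would show there is \emph{no} incompressible annulus from $T_J$ to $\bdry M$ at all, yet $U_g=\Uhat_g\cap M_{J_g}$ is exactly such an annulus (incompressible since $\gamma_g$ is $\pi_1$--essential); it is merely boundary-parallel, and the whole point is whether such an annulus can be made disjoint from $K$. This surfaces in your final dichotomy: when $B$ is $\bdry$--parallel you conclude $A'$ is isotopic to $A_0$ and call this a ``contradiction with essentiality of $A'$,'' but the lemma assumes only incompressibility, not essentiality. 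That boundary-parallel case is the one that actually occurs, and disposing of it is the heart of the paper's proof: one shows $U'$ is properly isotopic to $U_g$, normalizes $\bdry U'=\gamma_g$, intersects $U'$ with a push-in $P'$ of the pair of pants, and extracts from an outermost arc a $\bdry$--compressing disk for $P$ lying in $H$ and \emph{disjoint from $K$} (because $U'$ is), which via Lemma~\ref{lem:onearc} forces $\tau\in\{\mu,\lambda,\nu\}$, with $\nu$ excluded because the arc runs from $c(g,*,0)$ to itself. A structural error feeds into this: your $A_0$ is not contained in $(H_g)_{J_g}$ with $\bdry A_0=J_g^*\cup\gamma_g$, because $\gamma_g$ crosses $P(g,*,0)$ (it must, since $c(g,*,i)$ is obtained from $c(g,*,0)$ by twisting along $\gamma_g$), so $\Uhat_g$ passes through $H$ and in general through $K$; this is precisely why the reduction ``push everything into $(H_g)_{J_g}$'' cannot be carried out while preserving disjointness from $K$.

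The \mobius band case also fails at its $\bdry$--parallel subcase: the parallelism region for an annulus in a handlebody is a solid torus, not a $3$--ball, and a \mobius band embeds (even properly, as the core of a twisted $I$--bundle) in a solid torus, so no contradiction follows as stated; moreover a \mobius band with boundary does embed in a $3$--ball in any event. The paper's route avoids this entirely: after arranging $B$ disjoint from $U_g$, amalgamate $B$ with $\Uhat_g$ along $T_J$ to obtain a \mobius band properly embedded in $M$ with boundary $\gamma_g\subset\bdry M$; such a band always admits an essential disk of $M$ disjoint from it (hence from $\gamma_g$), contradicting that $\gamma_g$ is disk-busting. I would encourage you to rework both cases around the observation that every candidate surface is forced to be parallel to $U_g$, and that the contradiction must come from $K$ and the structure of $H\cong P\times I$, not from essentiality.
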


\begin{proof}
    Assume for contradiction that $U'$ is such an annulus, and let $U_g$ be the
    restriction of $\Uhat_g$ to $M_{J_g}$.  Because $\gamma_g$ is disk-busting,
    $M_{J_g}$ is irreducible and  boundary-irreducible.  This implies that $U'$
    and $U_g$ can be isotoped in $M_{J_g \cup K}$ to have disjoint boundaries
    on $T_J$ and, in fact, isotoped in $M_{J_g}$ to be disjoint.  Let $A$ be
    the properly embedded annulus in $M$ gotten by taking the union of
    $\Uhat_g$ and (an extension of) $U'$ along $J_g$. $A$ must be parallel into
    $\bdry M$, otherwise there is an essential disk in $M$ disjoint from $A$ --
    contradicting that $\gamma_g$ is disk-busting. But this implies that $U_g$
    and $U'$ are properly isotopic in $M_{J_g}$.
    Thus, by an isotopy in a neighborhood of $\partial M$, we may take
    $\partial U'=\gamma_g$. In particular, $\partial U'$ is disjoint from
    $a(g,*,0) \cup b(g,*,0)$ and no arcs of $\partial U' - c(g,*,0)$ are
    parallel into $c(g,*,0)$.  Let $P'=(P(g,*,0)\times \{1/2\}) \cup (\partial
    P(g,*,0) \times [0,1/2])$. Then $P'$ is incompressible in $M_{J_g}$ (since
    $P'$ is isotopic into $\partial M_{J_g}$ and $M_{J_g}$ is boundary-irreducible). 
    So we may assume that $U' \cap P'$ contains no simple closed
    curves which are trivial in either $U'$ or $P'$.  The arcs of intersection of
    $U'$ with $P'$ identify (in $U'$) a $\bdry$--compressing disk $D$ for $P'$ in
    $M$ that intersects $\partial M$ in a component of $\gamma_g - c(g,*,0)$.  One
    checks that $D$ must then lie in $H$ and $D \cap \partial M$ is an essential
    arc in $P(g,*,0)$ running from $c(g,*,0)$ to itself.  As $D$ comes from $U'$,
    it is disjoint from $K$.  As $\nu$ is isotopic to $c(g,*,0)$ in $H$, $\tau$
    then cannot be isotopic to $\nu$ in $T$.
    Lemma~\ref{lem:onearc} implies that $\tau$ must be isotopic to $\mu$ or
    $\lambda$ in $T$ --- contradicting our assumptions.

    Now assume there were a \mobius band, $B$, in $M_{J_g}$.  Note that $B$
    must be $\bdry$--incompressible in $M_{J_g}$ since $T_J$ does not bound
    a disk in $M_{J_g}$.  Then $U_g$ can be isotoped in $M_{J_g}$ such that its
    boundary is disjoint from that of $B$. In fact, we may take $B$ to be
    disjoint from $U_g$ (they must intersect in simple closed curves parallel
    to the boundary in $B$).  But then we can amalgamate $\Uhat_g$ with $B$
    along $J_g$ to obtain a \mobius band, $\widehat{B}$, properly embedded in
    $M$. There must be a meridian disk in $M$ disjoint from $\widehat{B}$ and
    in particular from $\bdry \Uhat_g$ in $M$.  But this contradicts that
    $\gamma_g$ is disk-busting.
\end{proof}

The main result of this section is:

\begin{prop}\label{prop:wrappingno}
    Assume $g\geq 2$ and $\tau$ is not isotopic to $ \mu$ or $\lambda$ in $T$.
    Consider the knot $K(\tau,g,*,i)$ in the handlebody $M(g,*,i)$ where $* = H,S$.
    \begin{enumerate}
        \item $K(\tau,g,H,i)$ admits a longitudinal surgery that is
            a handlebody.

        \item $K(\tau,g,S,i)$ admits a longitudinal surgery which is
            a $D(p,q)$--Seifert space in union with $g-1$ one-handles when
            $\tau=\tau(p,q)$.

        \item There is a function $N(\tau)$, i.e. independent of $g,*,i$, such
            that $b_g(K(\tau,g,*,i)) < N(\tau)$.

        \item There is a constant $N_D(\tau,*)>0$ such that
            $\max(\gwrap_D(K(\tau,g,*,i)),1)\geq |i| \cdot N_D(\tau,*) -1$, and
            hence $\gwrap_D(K(\tau,g,*,i)) \to \infty$ as $i \to \infty$.

        \item There is a constant $N_A(\tau,*)>0$ such that
            $\max(\gwrap_A(K(\tau,g,*,i)),1)\geq |i| \cdot N_A(\tau,*) -2$, and
    \end{enumerate}
\end{prop}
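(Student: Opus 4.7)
The plan is to dispatch parts (1)--(3) directly from earlier results, then prove (4) and (5) together by applying Theorem~\ref{thm:essentialsurfaces} with catching surface essentially $\Uhat_g$.  For (1) and (2), by Definition~\ref{def:bighandlebodyM} the big handlebody $M(g,*,i)$ is of handlebody type when $*=H$ and of Seifert type when $*=S$, so Proposition~\ref{k-has-handlebody-surgery} applied to $K(\tau)\subset H\subset M(g,*,i)$ immediately yields the longitudinal handlebody surgery (for $*=H$) or the longitudinal $D(p,q)$-Seifert-plus-$(g-1)$-handles surgery (for $*=S$, with $\tau=\tau(p,q)$).  For (3), the knot $K(\tau)$ lies in the fixed genus-$2$ little handlebody $H$, so has a bridge number $b_2(K(\tau))$ depending only on $\tau$; any such bridge position extends to one of $K(\tau,g,*,i)$ in $M(g,*,i)=H\cup H_g$ without new bridges by absorbing the $g-2$ extra handles of $H_g$ into one side of the splitting, so $b_g(K(\tau,g,*,i))\leq b_2(K(\tau))$ and we may take $N(\tau):=b_2(K(\tau))+1$.

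For (4) and (5), set $M=M(g,*,0)$, $K=K(\tau,g,*,0)$, $L=J_g$, and $X=M_{K\cup L}$.  Under the twist-surgery identification $(M(g,*,i),K(\tau,g,*,i))\cong(M_L(-1/i),K)$ furnished by Definition~\ref{def:twistingalongU}, an essential disk (for (4)) or essential annulus (for (5)) $S_i\subset M(g,*,i)$ minimizing $|S_i\cap K(\tau,g,*,i)|$ pulls back to a connected orientable surface $F\subset X$ whose $T_L$-boundary has slope $-1/i$ in the framing of $L$ given by $\Uhat_g$.  Take the catching surface to be $Q=\Uhat_g\cap X$, a connected planar surface whose $T_L$-boundary is a $\Uhat_g$-longitude; the slope intersection on $T_L$ then gives $\Delta_L=|i|$.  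The remaining inputs to Theorem~\ref{thm:essentialsurfaces} are bounded independently of $g$ and $i$: $\chi(\widehat{F})=2$ (since $\widehat{F}$ is a sphere), $f_M\in\{1,2\}$ according to the case, $\chi(Q)=-|K\cap\Uhat_g|$ depends only on $\tau$, and $\Delta_K'=1$ (as $\boundary Q\cap T_K$ consists of meridians of $K$ matching $\alpha_K$).  Thus the theorem's hypothesis reads $|i|>C_*(\tau)(f_K+c_*)$ for explicit positive constants depending only on $\tau$ and $*$, where $f_K=|S_i\cap K(\tau,g,*,i)|$ is the hitting number in question.

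The main technical obstacle is to verify that $F$ is $\boundary$-incompressible at $T_L$: by additionally choosing $S_i$ to minimize the number of times it crosses the core of the surgery solid torus, any $\boundary$-compression of $F$ along $T_L$ would produce either a less-intersecting surface of the same type (contradicting minimality) or a compression of $S_i$ itself (contradicting essentialness).  Granting this, if the theorem's hypothesis held, its conclusion would give a \mobius band properly embedded in $X$ with boundary of slope $\alpha_L$ on $T_L$, or a properly embedded incompressible annulus in $X$ with one essential boundary on $T_L$ and the other on $\boundary M$; in either case the surface sits in $M_{J_g}$ and is disjoint from $K$, so both configurations are precluded by Lemma~\ref{lem:no-spanning-annulus2} (using $\tau\notin\{\mu,\lambda\}$).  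Hence the hypothesis fails, and rearranging gives $\max(f_K,1)\geq|i|/C_*(\tau)-c_*$, yielding (4) with $N_D(\tau,*)=1/C_D(\tau)$ and additive constant $1$, and (5) with $N_A(\tau,*)=1/C_A(\tau)$ and additive constant $2$.
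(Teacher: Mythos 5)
Parts (1)--(3) are fine and essentially follow the paper: (1) and (2) are quoted from Proposition~\ref{k-has-handlebody-surgery}, and for (3) the paper likewise derives the uniform bound from a bridge presentation of $\tau$ inside $H=P\times I$ (it places $H$ in a collar of $\bdry M$ compatible with the standard splitting and notes that twisting along the vertical annulus $\Uhat_g$ preserves the height function, which is a cleaner way of justifying your ``extends without new bridges'' step).

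There is, however, a genuine error at the heart of your proof of (4) and (5): the choice of catching surface. You take $Q=\Uhat_g\cap X$, whose boundary on $T_L$ is the $\Uhat_g$--longitude of $L$, i.e.\ $\beta_L=(0,1)$ in meridian--longitude coordinates with respect to the $\Uhat_g$--framing. The surface $F$ pulled back from $M(g,*,i)$ has $\alpha_L=(1,-i)$ (your slope $-1/i$). But then $\Delta_L=\geomint{\alpha_L}{\beta_L}=\abs{1\cdot 1-(-i)\cdot 0}=1$ for every $i$, not $\abs{i}$. This is not a bookkeeping slip that can be absorbed into constants: the entire mechanism of Theorem~\ref{thm:essentialsurfaces} rests on $\Delta_L$ outgrowing the complexity term $f'_K\Delta'_K+f_M-\chi(\widehat{F})+2$, and with $\Delta_L\equiv 1$ the hypothesis is never satisfied for large $f_K$, so no lower bound growing in $\abs{i}$ can be extracted. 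The annulus along which one twists is precisely the wrong catching surface, because the $-1/i$ curves all meet the $0$--framing curve exactly once; what is needed is a surface whose $T_L$--slope is far from that framing, and the natural choice is meridional, since $\geomint{(1,-i)}{(1,0)}=\abs{i}$. This is what the paper does: it takes the meridian disk $\barQ_g$ of $H_g$ that $\gamma_g$ (hence $L=J_g$) meets algebraically once and geometrically seven times, tubes the oppositely oriented intersections with $L$ so that $Q$ meets $T_L$ in a single coherent meridian, and pays for this with $\chi(Q)\leq -6$ (a constant depending only on $\tau$ and $*$, not on $g$ or $i$). With that $Q$ one gets $\Delta_L=\abs{i}$ and the stated bounds with $N_D=1/(36\abs{\chi(Q)})$ and $N_A=1/(72\abs{\chi(Q)})$. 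Your remaining steps (meridional $\bdry Q\cap T_K$ giving $\Delta'_K=1$, $\chi(\widehat F)=2$, $f_M\in\{1,2\}$, incompressibility of $F$ giving $\bdry$--incompressibility at $T_L$, and the appeal to Lemma~\ref{lem:no-spanning-annulus2} to rule out both conclusions of Theorem~\ref{thm:essentialsurfaces}) match the paper and would go through once the catching surface is repaired.
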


\begin{proof}
    Statements (1) and (2) follow directly from
    Proposition~\ref{k-has-handlebody-surgery}.

    Fix $\tau,g$ and let $(M, K)=(M(g,*,0), K(\tau,g,*,0))$. Let
    $L=J_g=\gamma_g\times\set{1}\subseteq\Sigma\times [0,1], \Uhat=\Uhat_g$ be
    as above (preceding Lemma~\ref{lem:no-spanning-annulus2}).

    {\bf Statement (3).}
    Put $\tau \subset H=P\times I$ in bridge position with respect to the
    height function of this product structure.  Let $N(\tau)$ be the bridge
    number of this presentation.  When included into $M$, $H$ can be regarded
    as residing within a collar neighborhood of $\bdry M$ so that this product
    structure of $H$ is compatible with the product structure of the collar.
    Hence this puts $K$  in bridge position entirely within a collar
    neighborhood of $M$ that is disjoint from $L$, so the bridge number of $K$
    in $M$ is at most $N(\tau)$. As the knots $\{K(\tau,g,*,i), i \in \Z\}$ are
    obtained from $K$ by twisting along the vertical annulus $\Uhat$, which
    preserves the height in the collar, the bridge numbers of these knots is
    also at most $N(\tau)$. This is statement $(3)$ of the Proposition, noting
    that the bound is independent of $g,*,i$.

    For statements (4) and (5) we need a catching surface in order to apply
    Theorem~\ref{thm:essentialsurfaces}.  Let $\barQ_2$ be the meridian disk of
    $H_2$ whose boundary is the leftmost curve of the pants decomposition
    $\Pcal$ in Figure~\ref{fig:3diskbusting}.  Observe that $\gamma_2$
    intersects $\barQ_2$ algebraically once (with appropriate orientation) but
    geometrically seven times. Let $\barQ_g$ be the corresponding disk gotten
    by the inclusion of $H_2$ into $H_g$.   We also see algebraic intersection
    number one (with appropriate orientations) and geometric intersection
    number seven between $\gamma_g$ and $\barQ_g$.

    It follows that in $M$,  $L$ and $\barQ_g$ also algebraically intersect
    once but geometrically seven times.  Keeping $L$ fixed, tube oppositely
    oriented intersections of $L$ and $\barQ_g$ together to produce an oriented
    surface that $L$ intersects coherently and then isotop $K$ in the
    complement of $L$ to intersect this surface transversally. Let $Q$ be the
    restriction of the tubed $\barQ_g$ to the exterior of $K \cup L$ in $M$,
    $X=M_{K \cup L}$.  Note that $\chi(Q) = \chi(\barQ_g -\nbhd(K \cup L))\leq
    \chi(\barQ_g-\nbhd(L)) = -6$.  Let $T_K$ be the component of $\bdry X$
    corresponding to $K$ and $T_L$ be that corresponding to $L$.  Observe that,
    by construction, $Q$ intersects $T_L$ in a single component, a meridian of
    $L$. $Q$ will serve then as a catching surface for $(F,K,L)$ where $F$ is
    a properly embedded surface in $X$ whose boundary on $T_L$ is not
    meridional, see  Definition~\ref{def:catching1}. Note that being a catching
    surface does not require $Q$ to be incompressible or
    $\bdry$--incompressible in $X$.
    The surface $F$ will differ in $(4)$ and $(5)$.

    {\bf Statement (4).}
    Recall from the discussion preceding Lemma~\ref{lem:no-spanning-annulus2},
    that we may think of the knots $K(\tau,g,*,i)$ as gotten from
    $K=K(\tau,g,*,0)$ by twisting $i$ times along the annulus $\Uhat_g$ in $M$.
    Let $U_g$ be the restriction of this annulus to $X$. Let $\barF_i$ be
    a meridian disk of $M(g,*,i)$ that, among all meridian disks of $M(g,*,i)$,
    intersects $K(\tau,g,*,i)$ minimally.  This geometric intersection number
    is $\gwrap_D(K(\tau,g,*,i))$.  Furthermore, subject to this minimality
    assume that $\barF_i$ intersects $L$ minimally, a non-empty intersection
    since $\gamma_g$ is disk-busting in $M$.  Let $F_i$ be the restriction of
    $\barF_i$ to the exterior, $X_i$, of $K(\tau,g,*,i) \cup L$ in $M(g,*,i)$.
    Then $F_i$ is incompressible in $X_i$. Note that twisting along the annulus
    $\Uhat_g$ induces a homeomorphism between $X$ and $X_i$. Under this
    homeomorphism we view $F_i$ as a surface properly embedded in $X$. We are
    now in the context of Section~\ref{sec:essentialsurfaces} with the surfaces
    $F=F_i$ and $Q$ properly embedded in $X$. In that notation $\alpha_K,
    \beta_K$ are meridians of $K$ and consequently $\Delta_K=0$ so that
    $\Delta_K'=1$. As $\beta_L$ is a meridian of $L$ and $\alpha_L$ is obtained
    from a meridian of $L$ by Dehn-twisting longitudinally $-i$ times in $T_L$
    along $\partial U_g$, we have $\Delta_L=|i|$. In particular, when $i\neq0$
    then $\alpha_L$ is not a meridian of $L$, so $Q$ is a catching surface for
    $(F,K,L)$. Here $f_M=f_M'=1, \chi(\widehat{F})=2, \chi(Q)<0$ and
    $f_K=\gwrap_D(K(\tau,g,*,i))$.  (Recall that $\widehat{F}$ is the surface
    $F$ with all of its boundary components capped off with disks.)  Finally,
    $F$ is incompressible in $X$, hence it is $\boundary$--incompressible at $T_L$,
    and $|F \cap T_L| \neq 0$.  Then Theorem~\ref{thm:essentialsurfaces} along
    with Lemma~\ref{lem:no-spanning-annulus2} implies that

    \[ \max(\gwrap_D(K(\tau,g,*,i),1) \geq \frac{|i|}{36 |\chi(Q)|}-1. \]

    When $i=0$, note that the above holds trivially.  Observe that $\chi(Q)$
    depends on both $\tau$ and $* \in \{H,S\}$, but is independent of $g$ and
    $i$.  For statement (4) we then set $N_D(\tau,*) =  \frac{1}{36 |\chi(Q)|}
    > 0$.  Then, as $|i|\to\infty$, $\max(\gwrap_D(K(\tau,g,*,i)),1) \geq |i|
    N_D(\tau,*)-1$ implies $\gwrap_D (K(\tau,g,*,i)) \to \infty$.

    {\bf Statement (5).}
    Repeat the argument above taking $\barF_i$ to be an essential annulus.
    More specifically, let $\barF_i$ be a properly embedded essential annulus
    in $M(g,*,i)$ that, among all essential annuli in $M(g,*,i)$ intersects
    $K(\tau,g,*,i)$ minimally.  This intersection number is
    $\gwrap_{A}(K(\tau,g,*,i))$. Furthermore, subject to this minimality assume
    that $\barF_i$ intersects $L$ minimally, a non-empty intersection since
    $\gamma_g$ is annulus-busting. As above, let $F_i$ be the restriction of
    $\barF_i$ to $X_i$ and, via the homeomorphism between $X$ and $X_i$, regard
    $F_i$ as an incompressible surface $F$ properly embedded in $X$.  Again, we
    have $\Delta_K=0$ so that $\Delta'_K=1$, $\Delta_L=|i|$, and
    $\chi(\widehat{F})=2$, but now $f_M=f'_M=2$, and $f_K
    =\gwrap_{A}(K(\tau,g,*,i))$.  When $i \neq 0$, then $Q$ is a catching
    surface for $(F,K,L)$.

    Then Theorem~\ref{thm:essentialsurfaces} along with
    Lemma~\ref{lem:no-spanning-annulus2} implies that

    \[ \max(\gwrap_{A}(K(\tau,g,*,i)),1) \geq \frac{|i|}{72 |\chi(Q)|}-2. \]

    For statement (5) we then set $N_A(\tau,*) =  \frac{1}{72 |\chi(Q)|}$.
    Then, as $|i|\to\infty$, $\max(\gwrap_A(K(\tau,g,*,i)),1) \geq |i|
    N_A(\tau,*)-2$ implies $\gwrap_A(K(\tau,g,*,i)) \to \infty$.
\end{proof}

Section~\ref{section:big-handlebody} talks about when a pair of pants in the
boundary of a handlebody is of strong Seifert type or strong handlebody type,
a notion which allows us to get better control over the exteriors of the  knots
constructed.

\begin{prop}\label{prop:strongandhyp}
    For $g\geq 2$, there is a constant $N_{strong}$ such that when $|i|
    > N_{strong}$,
    \begin{itemize}
        \item $M(g,*,i)$ is of strong Seifert type or strong handlebody type, and

        \item when $\tau\not\in\{\lambda, \mu, \nu, \lambda-\mu, \lambda+\nu,
            \mu+\nu\}$, the exterior of $K(\tau,g,*,i)$ in $M(g,*,i)$ is
            irreducible, atoroidal, $\bdry$--irreducible, and anannular.
    \end{itemize}
\end{prop}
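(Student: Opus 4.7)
The argument follows the two bullets.

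For the first bullet, by Lemma~\ref{lem:old3diskbusting} it is enough to show $c(g,*,i)$ is $3$--disk-busting and annulus-busting in $H_g$. The structural observation is that the Dehn twist $T_{\gamma_g}^i\colon \bdry H_g \to \bdry H_g$ extends to a self-homeomorphism $\phi_i$ of $H_g$ supported in a collar of $\bdry H_g$; by construction $\phi_i$ carries $P(g,*,0)$ to $P(g,*,i)$ setwise, so the pairs $(H_g, P(g,*,0))$ and $(H_g, P(g,*,i))$ are homeomorphic. Since $3$--disk-bustingness and annulus-bustingness are topological properties of the pair $(H_g, c)$, they do not depend on $i$, and the first bullet reduces to verifying these properties for $c(g,*,0)$.

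For the $i=0$ verification, I would first exhibit a pants decomposition of $\bdry H_g$ compatible with $H_g$ for which $c(g,*,0)$ is $3$--seamed, building on the decomposition for $\gamma_g$ in Figure~\ref{fig:3diskbusting} together with additional curves near $a(g,*,0)$ and $b(g,*,0)$, then apply Lemma~\ref{lem:seamedisbusting} to conclude $3$--disk-bustingness. For annulus-bustingness, Lemma~\ref{lem:hypmakesbusting} suggests identifying $H_g[c(g,*,0)]$ with a hyperbolic manifold (leveraging the $5_2$--exterior identification of Figure~\ref{fig:hyperbolic-handle}); combined with disk-bustingness, the handle-addition argument of Lemma~\ref{lem:handleaddition} then rules out essential annuli disjoint from $c(g,*,0)$. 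With both properties in hand, Lemma~\ref{lem:old3diskbusting} upgrades this to strong handlebody or strong Seifert type, for every $i$, so in fact $N_{strong}=0$ suffices.

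For the second bullet, once $M(g,*,i)$ is known to be of strong handlebody or strong Seifert type, the knot $K(\tau,g,*,i)$ is precisely $K(\tau) \subset H$ under the inclusion $H \hookrightarrow M(g,*,i)$ of Definition~\ref{def:bighandlebodyM}. With the assumed restriction $\tau \notin \{\lambda, \mu, \nu, \lambda-\mu, \lambda+\nu, \mu+\nu\}$, all three clauses of Proposition~\ref{prop:exterior-hyperbolic} apply, yielding irreducibility, atoroidality, $\bdry$--irreducibility, and anannularity of $M(g,*,i)_{K(\tau,g,*,i)}$ as claimed.

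The main obstacle I anticipate is the annulus-bustingness of $c(g,*,0)$: whereas disk-bustingness drops out cleanly from the seamed-pants-decomposition criterion, ruling out essential annuli disjoint from $c(g,*,0)$ requires controlling the capped manifold $H_g[c(g,*,0)]$, and this is where the explicit construction of $\gamma_g$, the figures, and the handle-addition lemma are indispensable.
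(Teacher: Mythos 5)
There is a genuine gap, and it is in the very first structural claim for the first bullet. You assert that the Dehn twist $T_{\gamma_g}^i$ of $\bdry H_g$ extends to a self\nobreakdash-homeomorphism of $H_g$ supported in a collar, so that $(H_g,P(g,*,0))\cong (H_g,P(g,*,i))$ and everything reduces to $i=0$. This is false: a Dehn twist along a curve in the boundary of a handlebody extends over the handlebody only when the twisting curve is compatible with the disk structure (e.g.\ bounds a disk), and $\gamma_g$ is by construction $3$--disk-busting, so it bounds no disk in $H_g$. More decisively, if the pairs $(H_g,P(g,*,i))$ were all homeomorphic, then the pairs $(M(g,*,i),K(\tau,g,*,i))$ would all be homeomorphic, so the disk hitting numbers $\gwrap_D(K(\tau,g,*,i))$ would be independent of $i$ --- contradicting Proposition~\ref{prop:wrappingno}(4), which is the engine of the whole paper. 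For the same reason your conclusion that $N_{strong}=0$ suffices cannot be right: nothing forces $c(g,*,0)$ to be $3$--disk-busting or annulus-busting, and the figures you invoke (the $3$--seamed decomposition and the $5_2$--exterior identification) establish these properties for $\gamma_g$, not for $c(g,*,0)$, which is a different curve.

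The paper's actual argument runs in the opposite direction and genuinely needs large $|i|$: it observes that $(H_g,c(g,*,i))$ is homeomorphic to $(M(g,*,i),K(\nu,g,*,i))$, because $K(\nu)$ is isotopic in $H$ to $\bdry_0P=c(g,*,i)$ and $M(g,*,i)=H_g\cup_{P(g,*,i)}(P\times I)\cong H_g$. It then applies Proposition~\ref{prop:wrappingno}(4) and (5) with $\tau=\nu$ to conclude that the disk and annulus hitting numbers of $K(\nu,g,*,i)$ --- equivalently, the bustingness of $c(g,*,i)$ in $H_g$ --- grow with $|i|$, and sets $N_{strong}=\max(N_D(\nu,3),N_A(\nu,1))$. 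From there Lemma~\ref{lem:old3diskbusting} gives the strong type, and your handling of the second bullet via Proposition~\ref{prop:exterior-hyperbolic} agrees with the paper. So the second half of your argument is fine, but the reduction that feeds it needs to be replaced by the hitting-number estimate for $K(\nu,g,*,i)$.
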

\begin{proof}
    The pair $(H_g,c(g,*,i))$ is homeomorphic to the pair
    $(M(g,*,i),K(\nu,g,*,i))$ since $K(\nu)$  is isotopic in $H$ to
    $\partial_0 P=c(g,*,i)$.  By Proposition~\ref{prop:wrappingno}(4), for any
    $n$  there is an $N_D(\nu,n)$ such that if $|i|>N_D(\nu,n)$,
    $\gwrap_DK(\nu,g,*,i)>n$ in $M(g,*,i)$ (taking the larger of the constants
    over $*=H,S$).  Thus for  $|i|>N_D(\nu,n)$, $c(g,*,i)$ is $n$--disk-busting
    in $H_g$.  Similarly, by Proposition~\ref{prop:wrappingno}(5), there is
    a constant $N_{A}(\nu, n)$ independent of $g$ such that for any $n\in \N$,
    if $|i|>N_{A}(\nu,n)$, $\gwrap_AK(\nu,g,*,i)>n$ in $M(g,*,i)$.  Thus for
    $|i|>N_A(\nu,n)$, $c(g,*,i)$ is $n$--annulus-busting in $H_g$.

    Set $N_{strong} = \max(N_D(\nu,3),N_A(\nu,1))$ and assume $|i|
    > N_{strong}$.  Then $c(g,*,i)$ is $3$--disk-busting and annulus-busting.
    Lemma~\ref{lem:old3diskbusting} then implies that $(H_g,P(g,*,i))$, and
    hence $M(g,*,i)$, is of strong Seifert type or strong handelbody type.
    Proposition~\ref{prop:exterior-hyperbolic} completes the proof.
\end{proof}


\section{Bridge number bounds}\label{sec:bridgenumberbounds}

In this section we compute a lower bound on  $b_g(K(\tau,g,*,i))$ in $M(g,*,i)$
for each $i$, Proposition~\ref{prop:bridge-number-bound}. We then combine this
with Propositions~\ref{prop:wrappingno} and \ref{prop:strongandhyp} to find the
infinite families of knots for our main result Theorem~\ref{thm:main}.  Recall
that in Section~\ref{section:little-handlebody}, $\tau(\kappa,\alpha,n)$ means
that the curve $\tau$ in $T \times \{0\}$ ($H=T \times [-1,1]$) is gotten from
$\kappa$ by Dehn twisting $n$ times along $\alpha$. Thinking of $\tau$ as
a knot, $K(\tau)$,  in $H$, we alternatively think of $K(\tau)$ as gotten by
twisting $n$ times the knot $K(\kappa)$ along the annulus $\Rhat(\alpha)$ in
$H$, see Definition~\ref{defn:twisting}. Recall that $\Rhat(\alpha)$ is the
product $\alpha \times [-1/2,1/2]$ in $T \times [-1,1]$, and  $\Lm,\Lp$ are the
components of $\bdry \Rhat(\alpha)$. When the curve $\alpha$ is clear, we will
only write $\Rhat$ for $\Rhat(\alpha)$.

\begin{lemma}\label{lem:boundary-slope-1-1}
    When $\alpha$ is parallel to $\nu$, $\Lp$ and $\Lm$ are isotopic in $H$ to
    $\boundary_0P$.  The slope that $\Rhat(\alpha)$ determines on $\bdry
    N(\Lp)$ and on $\bdry N(\Lm)$ is distance one
    (Definition~\ref{def:distance}) from the slope describing the parallelism
    to $\boundary_0P$.
\end{lemma}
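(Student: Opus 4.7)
The plan has three steps: first establish the isotopy, then translate the slope comparison into a homological computation, then carry out the computation.

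For the isotopy, both $\Lp = \nu\times\{1/2\}$ and $\Lm = \nu\times\{-1/2\}$ represent the conjugacy class of $\mu\lambda$ in $\pi_1(H) = F(\mu,\lambda)$, since in the once-punctured torus $\nu$ represents $\mu\lambda$. Reading $\bdry_0 P$ around the spine $\mu_- \cup \nu_0 \cup \lambda_+$ of $P$ shows (with a suitable basepoint) that it factors as $\mu_- \cdot \nu_0 \cdot \lambda_+ \cdot \nu_0^{-1}$, also conjugate to $\mu\lambda$. Since $\mu\lambda$ is primitive in $F(\mu,\lambda)$, any simple closed curve in $H$ representing it is a core of $H$, and such cores are unique up to ambient isotopy (combining Lemmas~\ref{lem:coreofH} and~\ref{lem:primitivecore}), so $\Lp, \Lm,$ and $\bdry_0 P$ are mutually isotopic in $H$.

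For the slope comparison on $\bdry N(\Lp)$, I would work in the exterior $X = H \setminus N(\Lp)$. Any longitude of $\Lp$ is determined as a class in $H_1(X)$ modulo the meridian $[m]$, and two longitudes have distance $|n|$ on $\bdry N(\Lp)$ exactly when their $H_1(X)$-classes differ by $n[m]$. The $\Rhat(\alpha)$-slope is represented by $[\Lm]\in H_1(X)$, since $\Rhat(\alpha)\cap X$ is an annulus from $\Lm$ to the $\Rhat$-longitude. Likewise, any annulus realizing the parallelism to $\bdry_0 P$ intersects $X$ in an annulus showing that $[\bdry_0 P]$ represents the parallelism slope. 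Hence the distance equals the magnitude of the meridian coefficient of $[\bdry_0 P] - [\Lm]$ in $H_1(X)$.

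To evaluate this, use the product annulus $\lambda\times I \subset H$: it meets $\Lp = \nu\times\{1/2\}$ in exactly $|\lambda \cdot \nu| = 1$ signed point, so $(\lambda\times I)\cap X$ is a pair of pants, yielding the relation $[\lambda_+] - [\lambda_-] = \pm[m]$ in $H_1(X)$. With orientations chosen so that $[\bdry_0 P]$ and $[\Lm]$ both project to $[\mu]+[\lambda]$ in $H_1(H)$, one has $[\Lm] = [\nu_-] = [\mu_-] + [\lambda_-]$, while from $[\bdry_- P] + [\bdry_0 P] + [\bdry_+ P] = 0$ with $[\bdry_- P] = \pm[\mu_-]$ and $[\bdry_+ P] = \pm[\lambda_+]$ we obtain $[\bdry_0 P] = [\mu_-] + [\lambda_+]$. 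Subtracting gives $[\bdry_0 P] - [\Lm] = [\lambda_+] - [\lambda_-] = \pm[m]$, so the distance is exactly $1$. The statement for $\Lm$ is symmetric (swap the roles of $\Lp$ and $\Lm$ and use that $\Rhat(\alpha)\cap X'$ also cobounds the $\Rhat$-longitude with $\Lp$ in $X' = H\setminus N(\Lm)$).

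The main bookkeeping challenge will be carrying the orientations through carefully enough to be sure the relevant meridian coefficient is genuinely $\pm1$ rather than $0$; this ultimately reduces to the geometric fact that $\geomint{\lambda}{\nu} = 1$ so that the annulus $\lambda\times I$ contributes exactly one meridian to the boundary of $(\lambda\times I)\cap X$.
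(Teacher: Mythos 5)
Your treatment of the slope statement is correct and takes a genuinely different route from the paper's: the paper disposes of the whole lemma in two sentences, declaring the isotopy ``obvious from the definition'' and verifying the distance-one claim ``by direct inspection'' of Figure~\ref{fig:HandP}. Your homological argument in $X = H\setminus N(\Lp)$ is a legitimate replacement for that inspection. Since $[\Lp]$ is of infinite order in $H_1(H)$, Mayer--Vietoris shows the meridian $[m]$ has infinite order in $H_1(X)$, so a longitude on $\bdry N(\Lp)$ is indeed pinned down by its class in $H_1(X)$; the annulus $\Rhat\cap X$ and the parallelism annulus identify the two framings with $[\Lm]=[\mu_-]+[\lambda_-]$ and $[\bdry_0 P]=[\mu_-]+[\lambda_+]$ respectively; and the once-punctured product annulus $\lambda\times I$ gives $[\lambda_+]-[\lambda_-]=\pm[m]$ precisely because $\geomint{\lambda}{\nu}=1$. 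This buys more than the figure does: it shows the parallelism framing is independent of the choice of parallelism annulus, and it makes the source of the ``one'' transparent (it is exactly $\geomint{\lambda}{\nu}$, and symmetrically $\geomint{\mu}{\nu}$ for $\Lm$). The orientation bookkeeping you defer does close up, since $\Lm$, $\bdry_0 P$, and both framing curves can all be oriented to map to $[\mu]+[\lambda]$ in $H_1(H)$.

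The one genuine gap is in your justification of the first sentence of the lemma. The principle ``any simple closed curve in $H$ representing a primitive conjugacy class is a core of $H$'' is false --- a locally knotted representative of $\mu\lambda$ is not a core --- and Lemmas~\ref{lem:coreofH} and~\ref{lem:primitivecore} give uniqueness of the non-separating meridian disk disjoint from a \emph{given} core, not uniqueness of the core within a free homotopy class, so free homotopy to $\mu\lambda$ does not by itself yield the claimed isotopies. This is the part the paper calls obvious, and it is, but for geometric rather than algebraic reasons: $\Lp$ and $\Lm$ are vertically isotopic to $\nu\times\{1\}$ through the product structure on $T\times I$, and $\nu\times\{1\}$ is isotopic to $\bdry_0 P$ directly from the construction of $P$ as a regular neighborhood of $\mu\times\{-1\}\cup\nu_0\cup\lambda\times\{1\}$ with $\nu_0\subset\bdry D_\nu$. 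You should replace the free-homotopy argument with such a direct isotopy; the rest of your proof then stands.
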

\begin{proof}
    The first statement is obvious from the definition of $\Lp$ and
    $\Lm$.  By direct inspection, the slope of $\boundary R$ on
    $\boundary N(\Lp)$, say, is distance one from the surface slope of
    $\boundary M$; refer to Figure~\ref{fig:HandP}.
\end{proof}

Recall from Definition~\ref{def:gammag} that
$M(g,*,i)=(H,P)\cup_P(H_g,P(g,*,i))$. Below we will refer to $P$ as the
properly embedded copy of $P$ separating $H$ from $H_g$. Under the inclusion of
$H$ in $M(g,*,i)$, we consider $K(\tau), K(\kappa), \Rhat(\alpha)$ as living in
$M(g,*,i)$.  Following~\cite{BGL}, a genus $g$ splitting of the genus $g$
handlebody $M(g,*,i)$ is the decomposition of $M(g.*,i)$ into a union
$H_1\cup_{\Sigma} H_2$ where $H_1$ is a genus $g$ handlebody, $\Sigma$ is
a surface of genus $g$, and $H_2\cong \Sigma\times I$.  This is the
\defn{standard splitting} of $M(g,*,i)$.

\begin{lemma}\label{R-not-isotopic}
    Assume that $M=M(g,*,i)$ is of strong handlebody or strong Seifert type.
    If $\alpha$ is not parallel to $\mu$ or $\lambda$, then $\Rhat(\alpha)$ is
    not isotopic into the standard splitting of $M$.
\end{lemma}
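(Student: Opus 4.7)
The plan is a proof by contradiction: suppose $\Rhat(\alpha)$ is isotopic into $H_2 \cong \Sigma \times I$. I will deduce that $\alpha$ must then be parallel to $\mu$ or $\lambda$ in $T$. After the isotopy, $\Rhat(\alpha)$ sits in the product collar with boundary $\Lp \cup \Lm$ in the interior. Since any embedded annulus in a product with boundary in the interior is, after further isotopy within the product, a vertical annulus $c \times [t_0, t_1]$ for some essential simple closed curve $c \subset \Sigma = \bdry M$, the annulus framing $\Rhat(\alpha)$ induces on its core $K(\alpha)$ equals the surface framing of $c$ in $\bdry M$ transported via this vertical parallelism.

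Next I would analyze $\Rhat(\alpha) \cap P$ to pin down $\alpha$. By the strong type hypothesis and Lemma~\ref{lem:old3diskbusting}, $P$ is both incompressible and $\bdry$--incompressible in $M$; after minimizing $|\Rhat \cap P|$ and invoking Lemma~\ref{no-annuli-on-P} to eliminate inessential sub-annuli, the intersection reduces to essential circles (no arcs, since $\bdry \Rhat \cap P = \emptyset$), each parallel in $P$ to one of $\bdry_-P$, $\bdry_+P$, or $\bdry_0P$ and parallel in $\Rhat$ to its core. A sub-annulus of $\Rhat$ from a boundary component to the closest intersection circle, chosen on the $H$-side of $P$, then exhibits $K(\alpha)$ as isotopic in $H$ to a curve parallel to one of $\bdry_\pm P$ or $\bdry_0 P$. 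Computing $\pi_1$-classes in $\pi_1(H) = F(\mu, \lambda)$ and using the classification of simple closed curves on the once-punctured torus $T$, $\alpha$ must then be isotopic in $T$ to $\mu$, $\lambda$, or $\nu$.

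Since $\alpha$ parallel to $\mu$ or $\lambda$ is excluded by hypothesis, only $\alpha$ parallel to $\nu$ remains. In this case $K(\nu)$ is isotopic in $H$ to $\bdry_0 P \subset \bdry M$, so by boundary-irreducibility of $M$ the curve $c$ identified in the first paragraph must be isotopic in $\bdry M$ to $\bdry_0 P$. Hence the framing $\Rhat(\nu)$ induces on $K(\nu)$, which by the first paragraph equals the surface framing of $c$, agrees with the framing induced on $K(\nu)$ by the parallelism to $\bdry_0 P$ in $H$. This directly contradicts Lemma~\ref{lem:boundary-slope-1-1}, which asserts that these two framings differ by distance one. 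The main obstacle will be rigorously matching the three framings in the final step --- the collar product framing, the $\bdry M$ surface framing of $\bdry_0 P$, and the $H$-parallelism framing of $K(\nu)$ to $\bdry_0 P$ --- so that Lemma~\ref{lem:boundary-slope-1-1} yields the desired contradiction rather than being defeated by an unaccounted-for extra twist along $K(\nu)$.
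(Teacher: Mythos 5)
Your reduction to the case $\alpha\sim\nu$ follows the paper's outline: the paper likewise shows that if the core of $\Rhat(\alpha)$ were isotopic into $\bdry M$, an outermost-arc analysis of the parallelism annulus against $P$ would produce a $\bdry$--compressing disk for $P$ disjoint from $K(\alpha)$, whence $\alpha\in\{\mu,\lambda,\nu\}$ by Lemma~\ref{lem:onearc}. (Two small cautions there: the surface to intersect with $P$ is the annulus tracing the isotopy of the core into $\bdry M$, not $\Rhat$ itself, which starts out disjoint from $P$; and your opening claim that every embedded annulus in $\Sigma\times I$ with boundary in the interior is isotopic to a vertical one is false in general --- a twisted annulus in a neighborhood of $c\times\{1/2\}$ is embedded but carries a different framing. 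This does not matter if you read ``isotopic into the splitting'' as isotopic into the surface $\Sigma$, which is the intended meaning.)

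The genuine gap is exactly the one you flag at the end, and it is the crux of the remaining case $\alpha\sim\nu$, not a technicality. Your contradiction requires that the framing $K(\nu)$ receives from the isotopy into $\Sigma$ followed by the vertical collar structure agree with the framing it receives from the parallelism annulus in $H$ to $\boundary_0P$ followed by the $\bdry M$--surface framing. These are two a priori different annuli from $K(\nu)$ to $\bdry M$, and nothing you have said rules out their inducing different framings; if they differ by one, Lemma~\ref{lem:boundary-slope-1-1} gives no contradiction at all. Establishing that they agree amounts to a uniqueness statement for annuli from $K(\nu)$ to $\bdry M$ that you would still have to prove. The paper avoids the comparison entirely: it takes $\gamma$ to be the single, well-defined slope on $\bdry N(\Lp)$ induced by $\Rhat$ and computes the surgered manifold $M_{\Lp}(\gamma)$ in two ways. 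If $\Rhat$ lay in $\Sigma$, then since $\Lp\sim c$ is disk-busting the Handle Addition Lemma makes the capped-off splitting surface $\Sigma'$ a closed incompressible surface in $M_{\Lp}(\gamma)$; on the other hand Lemma~\ref{lem:boundary-slope-1-1} says $\gamma$ is distance one from the $\bdry M$--surface slope of the boundary-parallel curve $\Lp$, so $M_{\Lp}(\gamma)$ is again a handlebody and contains no such surface. To complete your proof you would need either to supply the framing-uniqueness argument or to replace your last paragraph with a surgery argument of this kind.
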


\begin{proof}
    That the type is strong guarantees that $P$ is incompressible in $M$,
    $\boundary$--incompressible in $M-H$ (for the handlebody type see
    Lemma~\ref{lem:old3diskbusting}), and that $\partial_0 P$ is disk-busting
    in $M$.   Suppose first that $\alpha$ is not parallel to $\mu$, $\lambda$,
    or $\nu$.  We will show that the core of $\Rhat$ is not isotopic into
    $\boundary M$.  Suppose that it is, so that there is an annulus $A$ with
    one boundary component, $\boundary_1 A$, on the core of $\Rhat$ and the
    other, $\boundary_2A$, lying on $\boundary M$.  Choose $A$ to minimize
    $|A\cap P|$ and note that $A$ is incompressible in $M$ since $L_+$ and
    $L_-$ are nontrivial knots.  Because $P$ is also incompressible, there are
    no trivial \sccs of intersection of $A$ with $P$.  Among arcs of
    intersection of $A\cap P$, choose $a$ to be outermost in $A$ so that it
    cuts off a disk $D\subseteq A$ with $\interior D\cap P = \emptyset$,
    $\boundary D=a\cup b$, and $b\subseteq\boundary M$.  If $b$ is trivial in
    $\boundary M-P$ we can isotop $A$ to reduce $|A\cap P|$.  By the
    incompressibility of $\partial M - P$, Lemma~\ref{lem:old3diskbusting}, $a$
    is not trivial in $P$. Therefore $D$ must be a $\boundary$--compressing
    disk for $P$ in $M$ that is disjoint from $K(\alpha)$, the core of $\Rhat$.
    As $P$ is $\boundary$--incompressible in $M-H$, $D$ must lie in $H$. But by
    Lemma~\ref{lem:onearc} and our restrictions on $\alpha$, no such disks
    exist.  Therefore $A\cap P=\emptyset$, which implies that $L_+$ and $L_-$
    are isotopic to a component of $\boundary P$.  Since $H \cong T \times I$,
    $\alpha$ is isotopic in $T$ to $\mu$, $\lambda$, or $\nu$.

    Now suppose that $\alpha$ is parallel to $\nu$ and $\Rhat$ is isotopic into
    the standard splitting surface $\Sigma$ for $M$.  After an isotopy of
    $\Rhat$ into $\Sigma$, observe that $\Sigma-L_+$ is incompressible because
    $L_+$ is disk-busting in $M$  (it is isotopic to $\nu$ and hence to $c$).
    Let $\gamma$ be the framing on $L_+$ induced by $\Sigma$, which is the same
    as that from $\Rhat$.  Since $M \cut \Sigma$ is the product between
    $\Sigma$ and $\bdry M$ and the handlebody bounded by $\Sigma$,
    $M_{L_+}(\gamma)$ can be viewed as the union of these two pieces each with
    a $2$--handle attached along the imprint of $L_+$.  By the Handle Addition
    Lemma \cite[Lemma 2.1.1]{CGLS}, the resulting boundary $\Sigma'$ of the
    handlebody with a $2$--handle attached along $L_+$ is incompressible into
    that side of $\Sigma'$. The other side of $\Sigma'$ is a compression body
    in which $\Sigma'$ is also incompressible.  Thus $\Sigma'$ is an
    incompressible surface in $M_{L_+}(\gamma)$.  Recall that when $\alpha$ is
    parallel to $\nu$, $\Lp$ is isotopic to $\boundary_0P$.  However, by
    Lemma~\ref{lem:boundary-slope-1-1}, the framing, $\gamma$, of $\Rhat$ on
    $\Lp$ is distance one from the surface framing by $\boundary M$.
    Therefore, surgery on $\Lp$ at the slope $\gamma$ yields a handlebody.
    Handlebodies contain no closed incompressible surfaces, and so $\Rhat$ is
    not isotopic into $\Sigma$.
\end{proof}

Let $\tau = \tau(\kappa,\alpha,n)$ and recall the handlebody/knot pair
$(M(g,*,i), K(\tau,g,*,i))$ of section~\ref{sec:defKtau}. Under the inclusion
of $H$ in $M(g,*,i)$, we consider $K(\kappa),K(\alpha)$ as well as the annulus
$\Rhat(\alpha)$ and its boundary $\Lm,\Lp$ as embedded in $M(g,*,i)$. Then the
knot $K(\tau,g,*,i)$ is gotten from $K(\kappa,g,*,i)$ by twisting $n$ times
along the annulus $\Rhat(\alpha)$.  Write $\L$ for the link $K(\kappa) \cup \Lm
\cup \Lp$ in $M(g,*,i)$ and $M(g,*,i)_{\L}$ for its exterior. Denote by
$T_+,T_-,T_{\kappa}$ the components of $\partial M(g,*,i)_{\L}$ corresponding
to $\Lp,\Lm,K(\kappa)$.  We want to find a  catching surface, $Q$, as defined
in \cite[Definition 2.1]{BGL}, for the pair $(\Rhat(\alpha),K(\kappa))$ in
$M(g,*,i)$.
We repeat that definition in this context.

\begin{definition}\label{def:catches}
    Let $Q$ be an oriented surface properly embedded in $M(g,*,i)_{\L}$. We say
    that $Q$ {\em catches} the pair $(\Rhat(\alpha),K(\kappa))$ if
    \begin{itemize}
        \item $\bdry Q \cap T_i$ is a non-empty collection of coherently
            oriented parallel curves on $T_i$ for each $i \in \{+,-\}$; and

        \item $\partial Q$ intersects both $T_+$ and $T_-$ in slopes different
            than $\partial R(\alpha)$.
    \end{itemize}
\end{definition}

\begin{lemma}\label{lem:catching} Let $\tau=\tau(\kappa,\alpha,n)$. The
    annulus/knot pair $(\Rhat(\alpha),K(\kappa))$ in $M(g,*,i)$ is caught by
    a surface $Q$ with $\chi(Q) < 0$.
    \begin{enumerate}
        \item Assume $\alpha$ is not parallel to $\mu, \lambda$ or $\nu$ in $T$
            and $g \geq 3$. Then $Q$ intersects $T_+,T_-$ in meridians, and if
            $Q$ intersects $T_{\kappa}$ at all it does so in meridians.

        \item When $\alpha$ is not parallel to $\mu, \lambda$ or $\nu$ and
            $g=2$, $Q$ can be taken so that $\chi(Q)$ depends on
            $\kappa,\alpha$ but not on $i$.  Then $Q$ intersects $T_+,T_-$ in
            meridians, and if $Q$ intersects $T_{\kappa}$ at all it does so in
            meridians.

        \item When $\alpha=\nu$ and $g \geq 2$, $Q$ can be taken so that
            $\chi(Q)$ depends on $\kappa,\alpha$ but not on $i$.  Furthermore,
            the intersection of $Q$ with $T_-$ is in meridians, with $T_\kappa$
            in meridians (if non-empty), and with $T_+$ in a single curve which
            is distance one from both the meridional slope and the slope
            induced by $\Rhat$.
    \end{enumerate}
\end{lemma}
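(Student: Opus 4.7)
The plan is to construct $Q$ in each case by combining a surface in the little handlebody $H$ with an extension through the big handlebody $H_g$ built from primitivizing disks of $a(g,*,i)$ and $b(g,*,i)$.  Since $a$ and $b$ are disjoint from the twisting curve $\gamma_g$, such primitivizing disks are unaffected by Dehn twisting along $\gamma_g$, so any surface assembled from pieces in $H$ together with these primitivizing disks is $i$-independent; this is the mechanism that will yield the $i$-independence of $\chi(Q)$ asserted in cases (2) and (3).

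For cases (1) and (2), where $\alpha$ is not isotopic to $\mu$, $\lambda$, or $\nu$ in $T$, I would take the product meridian disk $D_\nu = \nu'\times I \subset H$.  Since $\alpha \not\simeq \nu$ in $T$, this disk transversely meets $\Lp$ and $\Lm$ each in $\geomint{\alpha}{\nu}$ points and $K(\kappa)$ in $\geomint{\kappa}{\nu}$ points.  A direct inspection shows that $\partial D_\nu \cap P$ is a single seam arc of $P$ from $a=\partial_-P$ to $b=\partial_+P$.  Close $D_\nu$ into a properly embedded meridian disk $\widehat{Q}$ of $M(g,*,i)$ by attaching in $H_g$ a disk $E$ formed from primitivizing disks $E_a, E_b$ of $a,b$ joined by a band in the component of $\partial H_g \setminus (\gamma_g \cup a \cup b)$ adjacent to the seam, so the whole construction is $i$-independent.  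The restriction $Q := \widehat{Q}\cap M_\L$ is the required planar surface, with $\chi(Q) = 1 - 2\geomint{\alpha}{\nu} - \geomint{\kappa}{\nu} < 0$, meridional boundaries on $T_+$, $T_-$, and (if non-empty) $T_\kappa$, and all quantities depending only on $\kappa$ and $\alpha$.

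For case (3), where $\alpha = \nu$, $D_\nu$ is now disjoint from $\Lp$ and $\Lm$, so a new starting piece is needed.  Take the parallelism annulus $A \subset H$ from $\Lp$ to $c(g,*,i) \subset \partial M$; by Lemma~\ref{lem:boundary-slope-1-1}, $\partial A \cap T_+$ has slope distance one from both the meridian and $\partial\Rhat$ on $T_+$.  Augment $A$ by a piece built from the product disk $D_\mu$ extended through $H_g$ as above (which meets $\Lm$ meridionally and $\Lp$ meridionally once, since $\geomint{\mu}{\nu}=1$), and resolve the spurious $\Lp$-intersection by a local cut-and-paste along $A$ so that exactly one surface-slope $T_+$-boundary survives.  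The resulting $Q$ has the required boundary behavior on $T_+, T_-$, and $T_\kappa$, is built from $H$-pieces and $i$-invariant primitivizing disks in $H_g$, so $\chi(Q)$ depends only on $\kappa$ and $\alpha$.

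The main technical obstacle will be case (3): arranging the cut-and-paste so that the surviving $T_+$-boundary has the surface slope $\ell$ rather than the nearby slopes $\ell \pm m = \partial\Rhat$ that a naive tube sum would produce.  This requires a careful local model near the $\Lp$-intersection of the extended $D_\mu$, comparing the half-annular approach of $A$ (slope $\ell$) with the meridional half-disk collar of the intersection (slope $m$), to verify that the correct resolution cancels the meridional boundary against one side of $A$ rather than banding their slopes together.
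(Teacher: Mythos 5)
There is a genuine gap, and it affects all three cases: the piece you attach in $H_g$ to close up a product disk of $H$ does not exist in the range of $i$ for which the lemma is actually used. In cases (1) and (2) your $\widehat{Q}$ would meet the separating pair of pants $P(g,*,i)$ in a single essential seam running from $a$ to $b$, so $E=\widehat{Q}\cap H_g$ would be a properly embedded disk in $H_g$ meeting $P(g,*,i)$ in one essential arc, hence meeting $a$ and $b$ once each and $c(g,*,i)$ not at all. Such a disk is an essential (meridian) disk of $H_g$ disjoint from $c(g,*,i)$. But the lemma feeds into Proposition~\ref{prop:bridge-number-bound} only for $|i|>N_{strong}$, where by Proposition~\ref{prop:strongandhyp} the curve $c(g,*,i)$ is $3$--disk-busting and, by Lemma~\ref{lem:old3diskbusting}, $P(g,*,i)$ is $\bdry$--incompressible in $H_g$; so no such $E$ exists. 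The same objection defeats the ``$D_\mu$ extended through $H_g$'' step in case (3), since $D_\mu$ meets $P$ in a single essential arc as well. The claimed $i$--independence is also illusory: while $a$ and $b$ are fixed by twisting along $\gamma_g$, the gluing $\phi(g,*,i)$ composes $\phi(*)$ with $i$ Dehn twists along $\gamma_g$, and $\gamma_g$ meets $P(g,*,0)$ in essential arcs from $c$ to $c$ that separate $a$ from $b$; every seam from $a$ to $b$ crosses these arcs, so the image of $\bdry D_\nu\cap P$ in $P(g,*,i)$ wraps around $\gamma_g$ on the order of $|i|$ times and cannot be capped by a fixed configuration of primitivizing disks. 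The fact that the paper secures $i$--independence of $\chi(Q)$ only for $g=2$ and for $\alpha=\nu$ is itself a sign that no uniform construction of this kind is available. Finally, you never address coherence of the orientations of $\bdry Q$ on $T_\pm$, which is part of Definition~\ref{def:catches} (though for $D_\nu$ itself the intersections with $\Lp$ and $\Lm$ do happen to be coherent).

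The paper's construction deliberately avoids any surface meeting $P(g,*,i)$ in essential arcs. For (1) and (2) it starts from a meridian disk $D$ of $H_g$ bounded by a curve of the pants decomposition in Figure~\ref{fig:3diskbusting}, verifies via $D\cap H=D_+\cup D_-\cup tD_s$ and Lemma~\ref{L-meets-boundary-compressing-disks} that $D$ has nonzero algebraic intersection with each of $\Lp$ and $\Lm$, and tubes oppositely signed intersections along $\Lp$ and $\Lm$ to make $\bdry Q$ coherent and meridional; for $g=2$ it first attaches four bands to $D$ so that $\bdry\barQ$ is disjoint from $\gamma_2$, which is what makes $\chi(Q)$ independent of $i$. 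For $\alpha=\nu$ it takes $\barQ$ to be a copy of $P\times\{1\}$ isotoped rel $a\times\{1\}\cup b\times\{1\}$ so that its third boundary component becomes $\Lp$; this surface is punctured exactly once by $\Lm$ and meets $T_+$ in a single curve at distance one from both the meridian and $\bdry\Rhat$, with no cut-and-paste required. If you want to salvage your approach you would need to replace $E$ by a surface in $H_g$ of controlled Euler characteristic meeting $P(g,*,i)$ only in inessential arcs or in its boundary, which is essentially what the paper's banding and tubing accomplish.
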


\begin{proof}
    Recall from section~\ref{sec:defKtau} that there is an annulus $\Uhat_g$ in
    $M(g,*,0)$, gotten from the trace of an isotopy of $\gamma_g$ into the
    interior of $M(g,*,0)$, such that the pair $(M(g,*,i),K(\tau,g,*,i))$ is
    gotten from the pair $(M(g,*,0),K(\tau,g,*,0))$ by twisting along
    $\Uhat_g$. Similarly, the pair $(\Rhat(\alpha),K(\kappa))$ in $M(g,*,i)$ is
    obtained from the pair $(\Rhat(\alpha),K(\kappa))$ in $M(g,*,0)$ by
    twisting along $\Uhat_g$.

    We first assume that $\alpha$ is not parallel to $\nu$ (or by
    assumption to either $\mu$ or $\lambda$).  Let $D$ be the disk in $M(g,*,i)$ whose
    boundary is the middle curve of Figure~\ref{fig:3diskbusting}(left) where
    $D \subset H_2 \subset H_g \cong M(g,*,i)$.

    \begin{claim}\label{clm:intwithD}
        The algebraic intersection number of $D$ with each of
        $\Lp$ and $\Lm$ in $M(g,*,i)$ is non-zero.
    \end{claim}

    \begin{proof}
        In the language of section~\ref{section:little-handlebody}, we may take
        $D \cap H = D_{+} \cup D_{-} \cup tD_s$ where $D_s$ is a separating
        product disk in $H$ in $M(g,*,i)$ and $tD_s$ means $t$ parallel copies of $D_s$ for a non-negative integer $t$.  We may take $t=32i$ when $*=H$ and $t=64i$ when $*=S$.  (Note: for $g=2$, $P(2,*,0)$ is pictured in Figure~\ref{fig:typesofpants}, and $P(2,*,i)$ is gotten by twisting along $\Uhat_2$).  
        The algebraic intersection number of $\Lp$ or $\Lm$ with $D$ then
        depends only on its intersections with $D_{-} \cup D_{+}$ with the
        appropriate orientation.  This is $\pm \geomint{\alpha}{\mu} \pm
        \geomint{\alpha}{\lambda}$ by
        Lemma~\ref{L-meets-boundary-compressing-disks} --- which is non-zero as
        long as $\alpha \neq \tau(1,1), \tau(1,-1)$. By inspection one sees
        that the algebraic intersection number of $D$ with
        $\tau(1,1)=\nu=c(g,*,i)$ is zero.  The algebraic intersection number of
        $\tau(1,-1)$ with the oriented $D_{-} \cup D_{+}= D_{\mu} \cup
        D_{\lambda}$ is different than that of $\tau(1,1)$, hence non-zero.
        This gives the conclusion.
    \end{proof}

    Tube oppositely oriented intersections of $D$ along $\Lp$ ($\Lm$) so that
    all intersections with $\Lp$ ($\Lm$) have the same sign. Let $Q$ be the
    intersection of this surface with the exterior, $M(g,*,i)_{\L}$,  of
    $\L=K(\kappa) \cup \Lp \cup \Lm$ in $M(g,*,i)$. Orienting $Q$, we see that
    $\bdry Q$ is a non-empty, coherently oriented collection of meridians on
    the  components, $T_+,T_-$, of $\partial M(g,*,i)_{\L}$.  Note that
    a catching surface does not need to be incompressible or
    $\bdry$--incompressible.  Thus $Q$ becomes a catching surface for the
    annulus/knot pair $(\Rhat,K(\kappa))$ in $M(g,*,i)$. Note that
    $\chi(Q)<0$ and depends on $\kappa, \alpha, i$. This is conclusion (1)
    of the Lemma.

    We now modify the surface $Q$  to make its Euler characteristic independent
    of $i$ when $g=2$. First we consider $(\Rhat(\alpha),K(\kappa))$ in
    $M(2,*,0)$. As above, we assume that $\alpha \neq \mu,\lambda,\nu$.
    $P(2,*,0)$ is given in Figure~\ref{fig:typesofpants}.  Let $D$ be the disk
    whose boundary is the middle curve of Figure~\ref{fig:3diskbusting}(left),
    and attach four bands to $D$ to obtain $\barQ$, properly embedded in
    $M(2,*,0)$, whose boundary does not meet the disk-busting curve $\gamma_2$.
    See Figure~\ref{fig:hbody-general-catching}.

    \begin{claim}
        The algebraic intersection number of $\barQ$ with each of $\Lp$ and
        $\Lm$ in $M(2,*,0)$ is non-zero.
    \end{claim}

    \begin{proof}
        In the language of section~\ref{section:little-handlebody}, $\barQ \cap
        H = D_{+} \cup D_{-} \cup tD_s$ where $D_s$ is a separating disk in $H$
        and where we may take $t=4$ when $*=H$ and $t=8$ when $*=S$. The
        algebraic intersection number of $\Lp$ or $\Lm$ with $\barQ$ is then
        $\pm \geomint{\alpha}{\mu} \pm \geomint{\alpha}{\lambda}$ by
        Lemma~\ref{L-meets-boundary-compressing-disks} --- which is non-zero as
        long as $\alpha \neq \tau(1,1),\tau(1,-1)$. As in the proof of
        Claim~\ref{clm:intwithD}, one sees that the algebraic intersection
        number of $\barQ$ with $\tau(1,1)=\nu$ is zero, hence with $\tau(1,-1)$
        is non-zero.
    \end{proof}

    Tube oppositely oriented intersections of $\barQ$ along $\Lp$ ($\Lm$),
    within $H$ of $M(2,*,0)$, so that all intersections with $\Lp$ ($\Lm$) have
    the same sign. Let $Q$ be the intersection of this tubed $\barQ$ with the
    exterior, $M(2,*,0)_{\L}$,  of $\L=K(\kappa) \cup \Lp \cup \Lm$ in
    $M(2,*,0)$. Orienting $Q$, we see that $\bdry Q$ is a non-empty, coherently
    oriented collection of meridians on the  components, $T_+,T_-$, of
    $\partial M(2,*,0)_{\L}$, and any intersections with $T_{\kappa}$ will be
    in meridians.  Thus $Q$ becomes a catching surface for the annulus/knot
    pair $(\Rhat,K(\kappa))$ in $M(2,*,0)$. Note that $\chi(\barQ)<0$.  As
    $\barQ$ is disjoint from $\gamma_2$, we may take $J_2$ disjoint from $Q$.
    After twisting along $\Uhat_2$, we see the image of $Q$, which we also
    refer to as $Q$, as a catching surface for $(\Rhat,K(\kappa))$ in
    $M(2,*,i)$.  The boundary of $Q$ on each of $T_{-},T_{+},T_{\kappa}$  is
    still a collection of meridians, and $\chi(Q)$ is independent of $i$ as
    desired when $g=2$.  This is conclusion $(2)$ of the Lemma.

    \begin{figure}[h!tb]
        \begin{center}
            \includegraphics[width=0.95\textwidth]{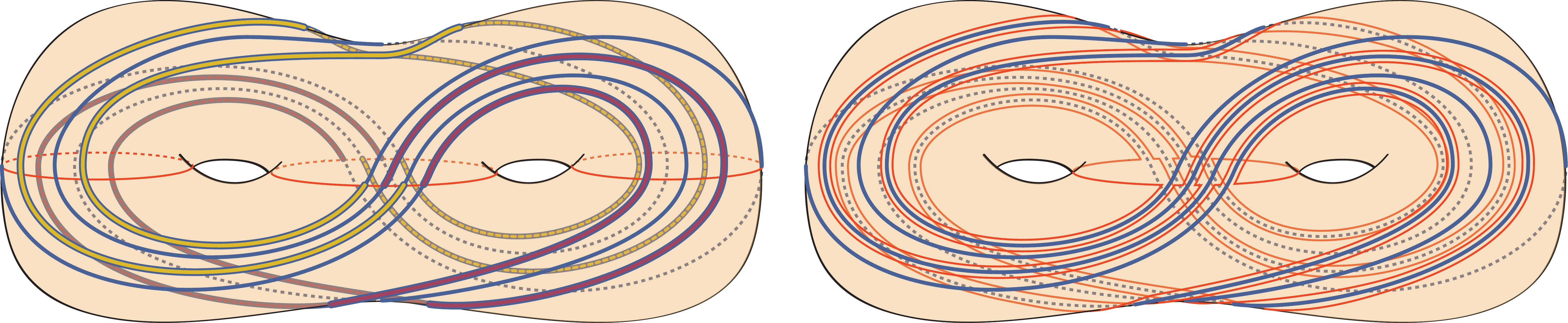}
        \end{center}
        \caption{(Left) Four subarcs of $\gamma_2$ that connect the central
        meridian disk to itself from the same side are highlighted. (Right)
        The boundary of the resulting orientable surface $\barQ$ is shown to be
        disjoint from $\gamma_2$.}
        \label{fig:hbody-general-catching}
    \end{figure}

    \begin{figure}[h!tb]
        \begin{center}
            \includegraphics[width=0.75\textwidth]{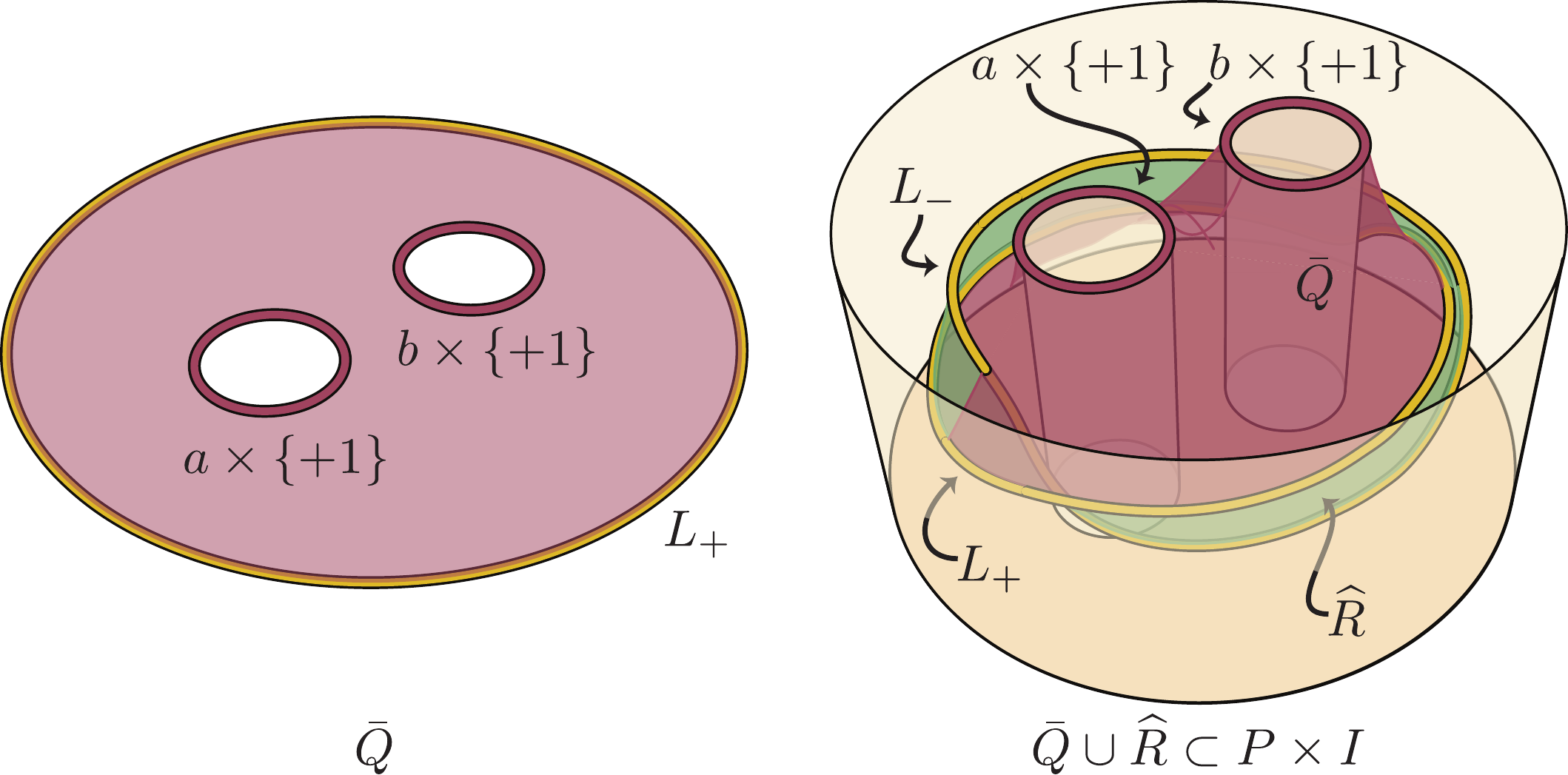}
        \end{center}

        \caption{The $3$--punctured sphere $\barQ$ resides in $P\times I$ with
        $\widehat{R}$ as shown.  Compare with $\nu \subset T\times \{0\}$ in
    Figure~\ref{fig:HandP}. }

        \label{fig:Qforalphaisnuinlittlehandlebody}
    \end{figure}

    We are left to consider the case that $\alpha$ is parallel to $\nu$ in $T$
    in any genus $g$. We will first exhibit a $3$--punctured sphere $\barQ$ in
    the little handlebody $H=P\times I$ with boundary $a \times \{1\} \cup
    b \times \{1\} \cup L_+$ that is intersected once by $L_-$ and some number
    of times by $K(\kappa)$.  Since $L_+$ is isotopic to $\nu$ which is
    isotopic to $c \times \{1\}$, we may take $\barQ$ to be the result of
    isotopy of $P\times \{1\}$ that fixes $a \times \{1\} \cup b \times \{1\}$
    and pulls the rest into the interior of $P\times I$.  See
    Figure~\ref{fig:Qforalphaisnuinlittlehandlebody}.  Because the framing that
    $T\times \{0\}$ gives $\nu$  differs from the framing that $P \times \{1\}$
    gives $c$ by $1$, we see that $L_-$ intersects $\barQ$ just once. Then we
    take $Q$ to be the surface $\barQ - \nbhd(L_+ \cup L_- \cup K(\kappa))$.
    Now we may attach $P\times I$ (along $P \times \{-1\}$) to $P(g,*,i)$  so
    that $Q$ becomes a catching surface for $(\Rhat(\alpha), K(\kappa))$ in
    $M(g,*,i)$. Then $\chi(Q)$ is independent of $i$. Furthermore, $Q$
    intersects $T_{-}$ in a single meridian, intersects $T_{\kappa}$ in
    meridians (if at all), and intersects $T_{+}$ in a single component whose
    distance from the meridian and from $\partial \Rhat$ is one. This is
    conclusion $(3)$.
\end{proof}

We can now give lower bounds on the bridge numbers of the knots
$K(\tau,g,*,i)$. These will be independent of $i$ when $g=2$ or $\alpha=\nu$.

\begin{prop}\label{prop:bridge-number-bound}
    Fix a genus $g$, and let $N_{strong}$ be as in
    Proposition~\ref{prop:strongandhyp}.  Let $\kappa, \alpha$ be curves in $T$
    such that $\alpha$ is not parallel to $\mu$ or $\lambda$ in $T$.  Let
    $\tau=\tau(\kappa,\alpha,n)$.  For  $|i| > N_{strong}$, there is
    a positive constant $C=C(g,\alpha,\kappa,i)$ such that \[
    b_g(K(\tau,g,*,i)) \geq \frac{1}{2}\left(|n| C -2g\right). \] When either
    $g=2$ or $\alpha=\nu$, the constant $C$ can be chosen to be independent of
    $i$ (given $|i| >N_{strong}$).
\end{prop}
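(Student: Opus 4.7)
The plan is to adapt the bridge-number lower bound strategy of \cite{BGL} to the annulus-twist setting. Start with a genus $g$ bridge surface $\Sigma$ realizing $b = b_g(K(\tau,g,*,i))$ in $M = M(g,*,i)$, so that $|\Sigma \cap K(\tau,g,*,i)| = 2b$.  After minimizing $|\Sigma \cap \Rhat(\alpha)|$, transport $\Sigma$ via the correspondence that undoes the $n$ twists along $\Rhat(\alpha)$ to obtain a properly embedded surface $F \subset X = M_\L$, where $\L = K(\kappa) \cup \Lp \cup \Lm$.  This $F$ has $\chi(\widehat F) \leq 2-2g$, carries $2b$ meridional boundary components on $T_\kappa$, and---because the twist acts on $T_\pm$ as $n$ Dehn twists along the $\Rhat$-longitude---has boundary on $T_\pm$ whose slopes lie at distance $|n|$ from the meridians of $\Lp$ and $\Lm$.

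Apply Theorem~\ref{thm:essentialsurfaces} with $K = K(\kappa)$ and $L = \Lp \cup \Lm$, using as catching surface the $Q$ from Lemma~\ref{lem:catching}. The extension to a two-component $L$ requires only cosmetic adjustments of the graph-counting proof, since that proof already parameterizes $\Delta_L$ and $V_L$ component by component.  In this setup $\Delta_L = |n|$, $\Delta_K = 0$ (both $F$ and $Q$ meet $T_\kappa$ in meridians), $f_M = 0$, $f_K = 2b$, and $-\chi(\widehat F) + 2 = 2g$, so that the theorem's inequality reads
\[
|n| \;>\; 6\max(-6\chi(Q),\,2)\,(2b + 2g).
\]

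If this inequality were satisfied, then conclusion (1) of Theorem~\ref{thm:essentialsurfaces} would produce a \mobius band in $X$ with boundary a slope $\alpha_L$ on one of $T_\pm$; amalgamating with the corresponding half-annulus of $\Rhat$ would place $\Rhat$ inside the standard Heegaard splitting of $M$, contradicting Lemma~\ref{R-not-isotopic} since $\alpha$ is not parallel to $\mu$ or $\lambda$.  Conclusion (2) would produce an essential spanning annulus from $T_\pm$ to $\partial M$, forbidden by Lemma~\ref{lem:no-spanning-annulus2} for $|i| > N_{strong}$ (when $\gamma_g$ is annulus-busting, as arranged in the proof of Proposition~\ref{prop:strongandhyp}).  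Hence the inequality must fail, yielding
\[
b \;\geq\; \tfrac{1}{2}\bigl(|n|\,C - 2g\bigr), \qquad C = C(g,\alpha,\kappa,i) = \tfrac{1}{6\max(-6\chi(Q),\,2)}.
\]
By Lemma~\ref{lem:catching}(2) and (3), for $g = 2$ or $\alpha = \nu$ the catching surface $Q$ may be chosen with $\chi(Q)$ independent of $i$, and so is $C$.

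The main obstacle is arranging $F$ so that its slopes on $T_\pm$ lie at distance $|n|$ from the meridians: a generic bridge surface $\Sigma$ for $K(\tau)$ can be isotoped to miss $\Lp \cup \Lm$, which would trivialize the slope comparison.  The fix is to minimize $|\Sigma \cap \Rhat(\alpha)|$ first, so that $F$ inherits transverse arcs from $\Sigma \cap \Rhat$, and then to track the effect of the $n$-twist on their endpoints on $T_\pm$; the shear by $n$ $\Rhat$-longitudes is precisely what forces $\Delta_L = |n|$.  A secondary point is the multi-component $L$ extension of Theorem~\ref{thm:essentialsurfaces}, which presents no essential difficulty since its proof is already organized by components of $T_L$.
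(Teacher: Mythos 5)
Your final arithmetic, and the roles you assign to the catching surface $Q$ of Lemma~\ref{lem:catching} and to Lemma~\ref{R-not-isotopic}, line up with what is needed, but the engine of your argument does not work. Theorem~\ref{thm:essentialsurfaces} requires the surface $F$ to be $\boundary$--incompressible at $T_L$, and this hypothesis is load-bearing: it is exactly what rules out monogons of $G_Q$ at vertices of $V_L$, the first step of the edge count. The surface you feed it --- a genus $g$ bridge surface for $K(\tau,g,*,i)$, punctured by $\L$ and transported back through the twist homeomorphism --- is a Heegaard-type surface and is compressible and $\boundary$--compressible; nothing in your minimization of $|\Sigma\cap\Rhat(\alpha)|$ repairs this. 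Moreover, since $\Sigma$ is parallel to $\bdry M$ it can a priori be isotoped off $\Lp\cup\Lm$ entirely, and minimizing its intersection with $\Rhat(\alpha)$ does not by itself force $f_L>0$, let alone produce boundary slopes at distance $|n|$ from the meridians; extracting that from the non-isotopy of $\Rhat$ into the splitting requires genuine bridge-position/thin-position machinery. This is precisely why the paper does not reprove anything here: its proof is a one-step application of \cite[Theorem~1.3]{BGL}, which is the bridge-surface analogue of Theorem~\ref{thm:essentialsurfaces} with the incompressibility hypothesis replaced by the structure of bridge position, and whose proof is substantially harder than the ``cosmetic adjustments'' you describe for passing to a two-component $L$ (note also that the Remark after Theorem~\ref{thm:essentialsurfaces} extends it in $K$, not in $L$).

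Two smaller points. The annulus you must exclude is one in $M_{\L}$ spanning from $T_+$ to $T_-$, which is Lemma~\ref{lem:no-spanning-annulus1}, not Lemma~\ref{lem:no-spanning-annulus2} (the latter concerns the auxiliary curve $J_g$ and is used in Proposition~\ref{prop:wrappingno}). And your claim that a \mobius band with boundary on $T_\pm$ would ``place $\Rhat$ inside the standard splitting'' and thereby contradict Lemma~\ref{R-not-isotopic} is unsubstantiated; in \cite[Theorem~1.3]{BGL} the non-isotopy of $\Rhat(\alpha)$ into the standard splitting is an independent hypothesis (verified here via Proposition~\ref{prop:strongandhyp} and Lemma~\ref{R-not-isotopic}), not the endpoint of a \mobius band alternative. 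Once the hypotheses of that theorem are checked --- non-isotopy of $\Rhat(\alpha)$ into the splitting, no essential $T_+$--$T_-$ annulus, and the catching surface with meridional or distance-one boundary slopes --- the displayed bound and the $i$-independence of $\chi(Q)$ when $g=2$ or $\alpha=\nu$ follow exactly as you compute.
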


\begin{proof}
    This follows by applying~\cite[Theorem 1.3]{BGL} to the annulus/knot pair
    $(\Rhat(\alpha),\kappa)$ in $M=M(g,*,i)$. Assume $|i|>N_{strong}$.  By
    Proposition~\ref{prop:strongandhyp} and Lemma~\ref{R-not-isotopic},
    $\Rhat(\alpha)$ is not isotopic into the genus $g$ splitting of $M$.
    Furthermore,  by Lemma~\ref{lem:no-spanning-annulus1} there is no essential
    annulus in $M_{\L}$ with one boundary component on each of $T_+$ and $T_-$
    (recall the standing assumption, see Definition~\ref{defn:twisting}, that
    $\kappa$ and  $\alpha$ are not isotopic in $T$.).  Let $Q$ be the
    catching surface of Lemma~\ref{lem:catching} for the annulus/knot pair
    $(\Rhat(\alpha),\kappa)$ in $M$. Since the components of $Q$ are
    meridional on $T_{\kappa}$, and either meridional or longitudinal and
    distance one from the slope of the framing induced by $\Rhat(\alpha)$
    on $ T_+$ and $T_-$, then, in the notation of Theorem 1.3 of
    \cite{BGL}, $\Delta_K$ is zero, $|p_1|,|p_2|=1$, and $|q_1|,|q_2| \leq
    1$. As $\chi(Q)$ is negative we conclude that

    \[ b_g(K(\tau,g,*,i)))\geq \frac{1}{2}\left(\frac{|n|}{-36\chi(Q)}-2g\right). \]

    Noting that $\chi(Q)$ depends only on $\alpha,\kappa,g, i$, and that, when
    $g=2$ or $\alpha=\nu$, $\chi(Q)$ depends only on $\alpha,\kappa,g$, this
    becomes the statement of the Proposition.
\end{proof}

Let the curves $\kappa$ and $\alpha$ be $(r,s)$ and $(t,v)$ curves with respect
to the basis on $T$ given by $\mu$ and $\lambda$. The result of $n$ Dehn twists
of $\kappa$ along $\alpha$ is a $(r + n\geomint{\kappa}{\alpha}t,
s + n\geomint{\kappa}{\alpha}v)$ curve,  where as in
Definition~\ref{def:distance}, $\Delta$ represents the geometric intersection
number in $T$.  That is, in the notation of
section~\ref{section:little-handlebody} $\tau(\alpha,*,n)=\tau(r
+ n\geomint{\kappa}{\alpha}t, s + n\geomint{\kappa}{\alpha}v)$.

We are now in a position to prove the main result

\begin{theorem}\label{thm:main}
    Let $M$ be a handlebody of genus $g>1$. For every positive integer $N$
    the following hold.
    \begin{enumerate}
        \item There are infinitely many knots $K\subseteq M$ such that $K$
            admits a nontrivial handlebody surgery and
            \[ b_g(K) \geq N. \]
		Furthermore, the knots may be taken to have the same
            genus $g$ bridge number.

        \item There are infinitely many pairs of relatively prime integers $p$
            and $q$ such that for each pair, there are infinitely many knots
            $K\subseteq M$ admitting a surgery yielding a $D(p,q)$--Seifert space in 
            union with $(g-1)$ $1$--handles.  Furthermore, for each such
            $K$
            \[ b_g(K)\geq N \]
            Finally, fixing $(p,q)$, the knots may be taken to have the same
            genus $g$ bridge number.
    \end{enumerate}

    The knots in each family above have exteriors in $M$ which are irreducible,
    boundary-irreducible, atoroidal, and anannular. The slope of each
    surgery is longitudinal, that is, intersecting the meridian once, and each
    knot admits \emph{exactly one} nontrivial $\boundary$--reducing surgery.
\end{theorem}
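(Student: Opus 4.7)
The plan is to assemble the theorem from the families $K(\tau_n,g,*,i)$ of Section~\ref{sec:defKtau} together with the estimates of Section~\ref{sec:bridgenumberbounds}. Throughout I would fix $\alpha=\nu$ (not isotopic to $\mu$ or $\lambda$) and $\kappa=\mu$ (not isotopic to $\nu$), so that $\tau_n := \tau(\kappa,\nu,n)$ is the curve $\tau(n+1,n)$ on $T$; note $\gcd(n+1,n)=1$ for every $n$. The choice $\alpha=\nu$ is critical, because Lemma~\ref{lem:catching}(3) (fed into Proposition~\ref{prop:bridge-number-bound}) produces a catching surface whose Euler characteristic is independent of the twisting parameter $i$ for every $g\geq 2$, which is exactly what is needed to get a uniform bridge-number lower bound across infinitely many $i$.

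For $|n|$ large enough that $\tau_n\notin\{\lambda,\mu,\nu,\lambda-\mu,\lambda+\nu,\mu+\nu\}$, Proposition~\ref{prop:bridge-number-bound} yields a constant $C=C(g,\kappa,\nu)>0$ (independent of $i$) and the bound
\[
  b_g(K(\tau_n,g,*,i)) \;\geq\; \tfrac{1}{2}\bigl(|n|\,C - 2g\bigr)
\]
for all $|i|>N_{strong}$. Choosing $|n|$ large makes the right side exceed $N$. Simultaneously, Proposition~\ref{prop:wrappingno}(3) bounds $b_g(K(\tau_n,g,*,i))$ above by $N(\tau_n)$, independent of $i$; so for each fixed $n$ the bridge numbers as $i$ varies lie in a bounded integer interval. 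Proposition~\ref{prop:wrappingno}(4) shows that $\gwrap_D(K(\tau_n,g,*,i))\to\infty$ with $|i|$, so infinitely many of these knots (viewed in the common handlebody $M\cong H_g$) are pairwise non-isotopic, and a pigeonhole argument extracts an infinite subfamily with one common value of $b_g\geq N$.

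For part~(1) I would take $*=H$: Proposition~\ref{prop:wrappingno}(1) (via Proposition~\ref{k-has-handlebody-surgery}) supplies the longitudinal handlebody surgery, and Proposition~\ref{prop:strongandhyp} supplies the properties of the exterior (irreducible, $\boundary$-irreducible, atoroidal, anannular). For part~(2) I would take $*=S$: as $n$ varies, the coprime pairs $(p_n,q_n)=(n+1,n)$ are distinct and infinite in number, and for each such fixed pair the same procedure delivers infinitely many knots with a common bridge number $\geq N$ admitting a longitudinal surgery onto a $D(p_n,q_n)$-Seifert space with $(g-1)$ one-handles attached.

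The main obstacle is the final uniqueness claim, that each of these knots admits \emph{exactly one} nontrivial boundary-reducible surgery. I would attack it by invoking bounds in the spirit of Wu \cite{Wu92,Wu93} on the number (and mutual distance) of boundary-reducing slopes on a knot in a handlebody whose exterior is irreducible, $\boundary$-irreducible, atoroidal, and anannular: any second such slope would lie within a bounded distance of the known longitudinal one, and combining that distance bound with the lower bound $b_g\geq N$ established above (together with the construction's explicit identification of the $\bdry$-reducing slope as the one carried by $T\times\{0\}\cap H_K$) should rule out such a second slope. This is by far the most delicate step and is where I expect the bulk of the technical work to lie.
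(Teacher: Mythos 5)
Your construction of the two families tracks the paper's own proof almost exactly: the paper likewise fixes $\alpha=\nu$ precisely so that Lemma~\ref{lem:catching}(3) makes $\chi(Q)$, and hence the constant in Proposition~\ref{prop:bridge-number-bound}, independent of $i$; it takes $\kappa=\tau(r,s)$ with $r>s\geq 0$ coprime (your $\kappa=\mu$ is the case $(r,s)=(1,0)$); it chooses $n$ large both to force $b_g\geq N$ and to push $\tau(\kappa,\nu,n)$ out of the excluded set $\{\lambda,\mu,\nu,\lambda-\mu,\lambda+\nu,\mu+\nu\}$; and it then invokes Proposition~\ref{prop:wrappingno}(1)--(4) and Proposition~\ref{prop:strongandhyp} exactly as you do, including the pigeonhole step to equalize the bridge numbers. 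That portion of your argument is correct and is the paper's argument.

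The genuine gap is the final claim that each knot admits \emph{exactly one} nontrivial $\bdry$--reducing surgery. Your proposed strategy --- bound the distance of a hypothetical second slope $\rho$ from $\sigma$ via Wu and then exclude $\rho$ using the lower bound $b_g(K)\geq N$ --- does not go through: there is no known implication from large bridge number to uniqueness of $\bdry$--reducing slopes, and indeed whether a knot (with irreducible, $\bdry$--irreducible, atoroidal, anannular exterior) admitting more than one cosmetic surgery must have bridge number one is posed as an \emph{open question} in the introduction of this very paper. Wu's result \cite{Wu92} only pins $\rho$ down --- it must be distance one from $\sigma$ and from the meridian --- it does not eliminate it. The paper's actual argument is structural rather than quantitative: the $3$--disk-busting property of $\bdry_0 P$ in $H'$ confines any $\bdry$--reducing disk of $M(\rho)$ to interact with $H(\rho)$; the distance-one constraint identifies $H(\rho)$ with $T\times I$ in which $P$ has been altered by a single Dehn twist along $K\times\{1\}$; a theorem of Ni \cite{Ni} then shows the product structure with fiber $P$ cannot persist (else $K$ would meet some nonseparating product disk at most twice, contradicting $\tau\notin\{\lambda,\mu,\nu,\lambda-\mu,\lambda+\nu,\mu+\nu\}$ via Lemmas~\ref{L-meets-boundary-compressing-disks} and \ref{lem:min-D-cap-P}); and Lemma~5.6 and Proposition~5.5 of \cite{Bowman13}, together with primitivity of $\bdry_\pm P$, rule out any $\bdry$--compression of $P$ or compression of its complement in $M(\rho)$. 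None of these ingredients appears in your sketch, and without them the uniqueness assertion is unproved.
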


\begin{remark}
    The common bridge number for each family above is not prescribed, it is
    only known to be  bigger than $N$.
\end{remark}

\begin{proof}
    Let $N_{strong}$ be the constant of Proposition~\ref{prop:strongandhyp} for
    the given $g$.  Let $N$ be any given integer.  Fix a relatively prime pair of
    nonnegative integers $r,s$ with $r>s$, and let $\kappa=\tau(r,s)$ and
    $\alpha=\nu=\tau(1,1)$.  Let $C(g,\alpha,\kappa)$ be as in
    Proposition~\ref{prop:bridge-number-bound} and take $n$ large enough so that
    this proposition guarantees that $b_g(K(\tau(\kappa,\alpha,n),g,*,i)) \geq
    N$ for any $i>N_{strong}$. Note that because $\alpha=\nu$,
    $C(g,\alpha,\kappa)$ is independent of $i$ and this bridge number bound
    holds for any $i>N_{strong}$. In addition, ensure that $n$ is large enough
    so that $\tau(\kappa,\alpha,n)=\tau(r + n\geomint{\kappa}{\alpha},
    s + n\geomint{\kappa}{\alpha})=\tau((n+1)r-ns,nr-(n-1)s)$ is not isotopic
    in $T$ to any of $\{\lambda, \mu, \nu, \lambda-\mu, \lambda+\nu, \mu+\nu\}$
    (e.g.\  by ensuring that $\max(|(n+1)r-ns|,|nr-(n-1)s|)>2$).  As the
    disk-hitting number is a knot invariant,
    Proposition~\ref{prop:wrappingno}(4) shows that, fixing $n$ as above, as
    $i$ grows there are infinitely many different knots
    $K(\tau(\kappa,\alpha,n),g,*,i)$ in the genus $g$ handlebody $M \cong
    M(g,*,i)$. Proposition~\ref{prop:wrappingno}(3) shows that all of the knots
    $K(\tau(\kappa,\alpha,n),g,*,i)$ have a common upper bound on their bridge
    number, hence we may take an infinite subcollection that has  the same
    bridge number. Finally, as long as we take $|i|>N_{strong}$,
    Proposition~\ref{prop:strongandhyp} shows that the exteriors of these knots
    are irreducible, boundary-irreducible, atoroidal, and anannular.

    When $*=H$, Proposition~\ref{prop:wrappingno}(1) shows that these knots admit longitudinal handlebody surgeries.  
When $*=S$, Proposition~\ref{prop:wrappingno}(2) shows that these knots admit
a longitudinal surgery which is a $D((n+1)r-ns,nr-(n-1)s)$--Seifert space
with $(g-1)$ $1$--handles attached. 
(Note that in the case $*=S$,
we can arrange different $D((n+1)r-ns,nr-(n-1)s)$--Seifert spaces
with $(g-1)$ $1$--handles attached to be obtained as the result of Dehn surgery on knots in $M$
with bridge number bigger than $N$ as claimed 
by taking different nonnegative integers $r,s$ or larger integers $n$.)

Finally, we show that each such knot constructed admits exactly one nontrivial
boundary-reducing surgery.  This argument is adapted from~\cite[Lemma 5.6
and Proposition 5.7]{Bowman13}.  
Assume the above  longitudinal, $\bdry$--reducing surgery on $K=K(\tau(\kappa, \alpha,n),g,*,i)$ occurs along the slope $\sigma$.
Suppose that there is a second nontrivial
boundary-reducible surgery on $K$ in $M$ with slope $\rho \neq \sigma$, so that
$M(\rho)\cong H'\cup_P H(\rho)$.  
The first paragraph of the proof of Proposition~\ref{prop:strongandhyp} shows that, by our choice of $P \subset \bdry H'$ (with $|i|>N_{strong}$), the curve $\bdry_0 P$ is $3$--disk-busting in $H'$.  Hence any $\bdry$--reducing disk for $M(\rho)$ is either contained in $H(\rho)$ (and disjoint from $P$) or contains a $\bdry$--compressing disk for $P$ among the components of its intersection with $H(\rho)$.   Thus we may focus our attention upon $H(\rho)$.   Therefore we restrict attention to $\rho$--surgery upon $K \subset H$ and identify $K$ with $\tau = \tau(\kappa,\alpha,n)$.

By Proposition~\ref{prop:exterior-hyperbolic}
and a result of Wu~\cite{Wu92}, since $\tau \not \in \{\lambda, \mu, \nu\}$, this third surgery slope $\rho$ must have distance one
from $\sigma$ and from the meridional slope of $K$.  
Therefore, recalling Figure~\ref{fig:xyzV2}, since $H \cong T \times I$  and $K$ lies in $T \times\{0\}$ with framing $\sigma$, the effect of $\rho$--surgery on $K$ in $H$ may be viewed as altering the gluing between $T\times [-1,0]$ and $T\times[0,1]$ by a twist along $K$. In particular, we may identify $H(\rho)$ with $H=T\times[-1,1]$ except where $P \subset \bdry (T \times [-1,1])$ has been altered by a Dehn twist along $K \times \set{1}$.  Thus any essential simple closed curve in $T\times \{\pm1\} \subset \bdry H$ continues to be a primitive curve in $H(\rho)$.  Notably, the curves $\bdry P_\pm$ are each primitive in $H(\rho)$.

By a result of Ni~\cite{Ni}, if $\rho$--surgery on $K$ in $H = P \times I$ preserves the product structure 
(i.e.\ $P$ continues to be a fiber in $H(\rho) \cong P \times I$),
 then $K$ meets every non-separating $\bdry$--compressing disk for $P$ in $H$ at most twice. Indeed this would imply $\tau \in \{\lambda, \mu, \nu, \lambda-\mu, \lambda+\nu, \mu+\nu\}$, cf.\ Lemmas~\ref{L-meets-boundary-compressing-disks} and \ref{lem:min-D-cap-P}.  Above, we have ensured that $\tau(\kappa,\alpha,n)$ does not belong to this set.  Hence $H(\rho)$ does not have a product structure with $P$ as a fiber.

If $P$ is $\bdry$--compressible in $H(\rho)$, then since $H(\rho)$ is not a product with fiber $P$ and the curves $\bdry_\pm P$ are primitive curves, \cite[Lemma 5.6]{Bowman13} implies that $\bdry_0 P$ is not primitive and every $\bdry$--compressing disk for $P$ in $H(\rho)$ is disjoint from $\bdry_0 P$.  Since $\bdry_0 P$ would be isotopic to a boundary component of an annulus resulting from such a $\bdry$--compression of $P$, \cite[Proposition 5.5.]{Bowman13} implies that $\bdry_0 P$ would have to be primitive in either $H'$ or $H(\rho)$.  However, neither of these can occur since it is $3$--disk-busting in $H'$ and non-primitive in $H(\rho)$.  Thus $P$ is not $\bdry$--compressible.

Finally, if the complementary pair of pants $\bdry H(\rho)-P$ compresses in $H(\rho)$, then since the curves $\bdry_\pm P$ are primitive, it must be that $\bdry_0 P$ bounds a compressing disk.  However, in the expression of $H(\rho)$ as $T \times I$ with $P$ Dehn twisted once along $K \times \{1\}$,  $\bdry_0 P$ traverses the annulus $(\bdry T) \times I$ twice.  Hence the compressing disk with boundary $\bdry_0 P$ is a product disk and thus the curves $\bdry_\pm P$ in $T \times \{\pm1\}$ project to the same curve in $T$.  Yet since $\bdry_\pm P$ in $H$ projected to curves $\lambda$ and $\mu$ in $T$, this implies that $K$ must be isotopic to $\nu=\lambda+\mu$ or $\lambda-\mu$.

 This finishes the proof that there is exactly one nontrivial
boundary-reducible surgery on $K\subset M$.
\end{proof}

If we are not concerned about determining a lower bound on bridge number, the following points out 
that for any $(p,q)$, we can obtain the boundary
connected sum of a $D(p,q)$--Seifert space with a genus  $g-1$--handlebody
by Dehn surgery on infinitely many knots in a genus $g$ handlebody.

\begin{theorem}\label{thm:main2}
    Let $M$ be a handlebody of genus $g>1$ and $p,q$ be  non-zero, relatively prime
    integers.  There are infinitely many knots $K\subseteq M$ admitting
    a longitudinal surgery yielding a  $D(p,q)$--Seifert space with $(g-1)$
    $1$--handles attached.  Furthermore,  the exterior in $M$ of each knot is  irreducible,
    boundary-irreducible, atoroidal, and anannular. Finally, for fixed $p,q$,
    these knots can be taken  to have the same genus $g$ bridge number in $M$.
\end{theorem}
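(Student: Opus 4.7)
The strategy is to run a direct simplification of the proof of Theorem~\ref{thm:main}(2). Since we no longer need to drive the bridge number above a prescribed value, we do not need the Dehn-twisting parameter $n$ to be large; we may take $\kappa = \tau(p,q)$ and $n = 0$, and produce the required infinite family by varying only the twisting parameter $i$ along the annulus $\Uhat_g$ of Section~\ref{sec:defKtau}. Concretely, work in the handlebody $M(g,S,i)$ of Seifert type and consider the knots $K_i := K(\tau(p,q), g, S, i)$ for integers $i$ with $|i| > N_{strong}$.

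All four required properties drop directly out of the propositions of Section~\ref{sec:wrapping}. Proposition~\ref{prop:wrappingno}(2) furnishes the longitudinal surgery yielding a $D(p,q)$--Seifert space with $(g-1)$ one-handles attached. Proposition~\ref{prop:wrappingno}(4) shows that $\gwrap_D(K_i) \to \infty$ as $|i| \to \infty$; since the disk hitting number is a knot invariant, the $K_i$ represent infinitely many distinct knot types. Proposition~\ref{prop:wrappingno}(3) provides a uniform upper bound $N(\tau(p,q))$ on $b_g(K_i)$, so the pigeonhole principle extracts an infinite subfamily with a common genus $g$ bridge number in $M$. Finally, Proposition~\ref{prop:strongandhyp} certifies that $M(g,S,i)$ is of strong Seifert type and that the exterior of each $K_i$ in $M(g,S,i)$ is irreducible, boundary-irreducible, atoroidal, and anannular.

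The one hypothesis requiring care is the condition $\tau(p,q) \notin \{\lambda, \mu, \nu, \lambda-\mu, \lambda+\nu, \mu+\nu\}$ from Proposition~\ref{prop:strongandhyp}. For nonzero coprime $(p,q)$ this excludes only the finitely many sign-equivalence classes $\pm(1,1)$, $\pm(1,-1)$, $\pm(1,2)$, and $\pm(2,1)$. The pairs $\pm(1,1)$ and $\pm(1,-1)$ correspond to a degenerate $D(p,q)$ that is simply a solid torus, so the target surgery manifold is a genus $g$ handlebody and the conclusion can be extracted (modulo the bridge-number clause, which is not required here) from the handlebody-type construction underlying Theorem~\ref{thm:main}(1). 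For the remaining pairs $\pm(1,2)$ and $\pm(2,1)$, one precomposes the identification $\phi(g,S,0)\colon P \to P(g,S,0)$ with a mapping class of $H = P \times I$ chosen to move the relevant torus curve out of the bad set while still arranging a longitudinal $D(p,q)$--Seifert surgery on the resulting knot; the propositions of Section~\ref{sec:wrapping} then apply to the modified construction unchanged.

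The main obstacle is therefore not the main-line construction, which is a straightforward specialization of the argument for Theorem~\ref{thm:main}(2) with $n = 0$ fixed, but rather the bookkeeping needed to certify the four exterior properties for the finitely many exceptional pairs $(p,q)$ whose associated torus curve lies in the bad set. Once those cases are dispatched as above, the argument is complete.
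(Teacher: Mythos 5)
Your main-line argument is exactly the paper's proof of Theorem~\ref{thm:main2}: take the knots $K(\tau(p,q),g,S,i)$ with $|i|>N_{strong}$, get the surgery from Proposition~\ref{prop:wrappingno}(2), infinitely many knot types from the growth of the disk hitting number in Proposition~\ref{prop:wrappingno}(4), a common bridge number from the uniform upper bound in Proposition~\ref{prop:wrappingno}(3), and the exterior properties from Proposition~\ref{prop:strongandhyp}. You also correctly identify that the only issue is the excluded set $\{\lambda,\mu,\nu,\lambda-\mu,\lambda+\nu,\mu+\nu\}$, and for nonzero coprime $(p,q)$ you list the offending classes correctly.

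The genuine gap is your treatment of $(p,q)=\pm(1,2)$ and $\pm(2,1)$. First, the proposed fix is unnecessary: an exceptional fiber of order one is a regular fiber, so $D(1,2)$ and $D(2,1)$ are solid tori just as $D(1,1)$ is, and the paper dispatches \emph{all} cases with $|p|=1$ or $|q|=1$ (which covers every excluded nonzero coprime class) in one stroke by observing that the target manifold is then a genus $g$ handlebody, reducing to Theorem~\ref{thm:main}(1). Second, and more importantly, the fix you do propose --- precomposing $\phi(g,S,0)$ with a mapping class of $H=P\times I$ to ``move the curve out of the bad set'' --- would not work. The excluded set is characterized intrinsically by Lemmas~\ref{L-meets-boundary-compressing-disks} and \ref{lem:min-D-cap-P}: it consists of the curves $\tau$ whose geometric intersection numbers with $\mu,\lambda,\nu$ form the multiset $\{0,1,1\}$ or $\{1,1,2\}$, i.e.\ whose intersection pattern with the three non-separating product disks of $H\cong P\times I$ is of one of these two types. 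Any self-homeomorphism of the pair $(H,P)$ permutes those three product disks, hence preserves the excluded set; and changing the identification of $P$ with $P(g,S,i)$ changes which lens spaces arise in the proof of Proposition~\ref{k-has-handlebody-surgery}, hence changes the Seifert coordinates of the resulting surgery, so you cannot both escape the excluded set and retain a $D(p,q)$--surgery for the original $(p,q)$ this way. Replace that step with the solid-torus observation and your proof coincides with the paper's.
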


\begin{proof}
    If either $|p|=1$ or $|q|=1$ then the $D(p,q)$--Seifert space is a solid
    torus and this becomes a special case of Theorem~\ref{thm:main}$(1)$. So we
    assume $|p|,|q|>1$. Then $\tau(p,q) \not\in\{\lambda, \mu, \nu, \lambda-\mu,
    \lambda+\nu, \mu+\nu\}$. Proposition~\ref{prop:wrappingno}(2) shows that
    $K(\tau(p,q),g,S,i)$ admits a surgery which is a $D(p,q)$--Seifert space
    union $g-1$ $1$--handles. As the disk-hitting number is a knot invariant,
    Proposition~\ref{prop:wrappingno}(4) shows that as $i$ grows there are
    infinitely many such knots. Proposition~\ref{prop:wrappingno}(3) shows that all
    of these have a common upper bound on their bridge number, hence can be taken
    to have the same bridge number. Finally, as long as we take $|i|>N_{strong}$,
    Proposition~\ref{prop:strongandhyp} shows that the exteriors of these knots are
    irreducible, boundary-irreducible, atoroidal, and anannuluar.
\end{proof}

\section{The genus two case}\label{section:genus2}
In this section we clarify the connection between the knots in genus $2$ handlebodies
constructed here and those of~\cite{Bowman13}.
Proposition~\ref{prop:bridge-number-bound} considerably strengthens the results
of~\cite{Bowman13} in certain cases. It identifies  new infinite subcollections
of knots constructed in \cite{Bowman13} that have bridge number bigger than
one.  In fact, it shows these bridge numbers can be taken to be arbitrarily
large.  This is the content of Corollary~\ref{seansknots} below. We finish by exploring a
technique for producing handlebody/pants pairs that are of strong 
handlebody or strong Seifert type.  By Definition~\ref{def:bighandlebodyM}, and Propositions~\ref{k-has-handlebody-surgery}, ~\ref{prop:exterior-hyperbolic},  these handlebody/pants pairs 
give rise to lots of knots in a genus $2$ handlebody with 
boundary-reducing surgeries whose exteriors are irreducible, boundary-irreducible, atoroidal, and anannular. 
Note that subfamilies of these knots can be generated by annulus twisting, and the results of \cite{BGL} can
often be applied
to establish lower bounds on bridge number -- as is done with the knots 
$K(\tau(\kappa,\alpha,n),2,*,i)$ in
section~\ref{sec:bridgenumberbounds}.

We first show how $2$--bridge knot or link exteriors give rise to pairs
$(H',P')$ of handlebody or Seifert type 
(see Definitions~\ref{def:handlebodytype},~\ref{def:seiferttype}).

\begin{definition}\label{associatedpair}
Let $J$ be a non-trivial $2$--bridge knot or link in canonical
$2$--bridge position with respect to some height function
$h\co S^3\to \R$.  Then $J$ has
an unknotting tunnel connecting its two maxima, \ie an arc $t$ such that
$H_J'=S^3_{J\cup t}$ is a handlebody.  Furthermore, choose $t$ so that it
has one maximum and no minimum under $h$ in its interior.  The link exterior
$S^3_J$ is obtained from $H_J'$ by attaching a $2$--handle along an attaching
curve $c\subseteq\boundary H'$.  Let $P_J'$ be the pair of pants embedded
in $\boundary H_J'$ as shown in Figure~\ref{fig:Hprime-P}, so that $\boundary P_J' = a\cup b\cup c$ 
where $a$ and $b$ are
meridians of one component of $J$.  
\end{definition}

\begin{figure}[h!tb]
  \begin{center}
  \includegraphics[width=0.5\textwidth]{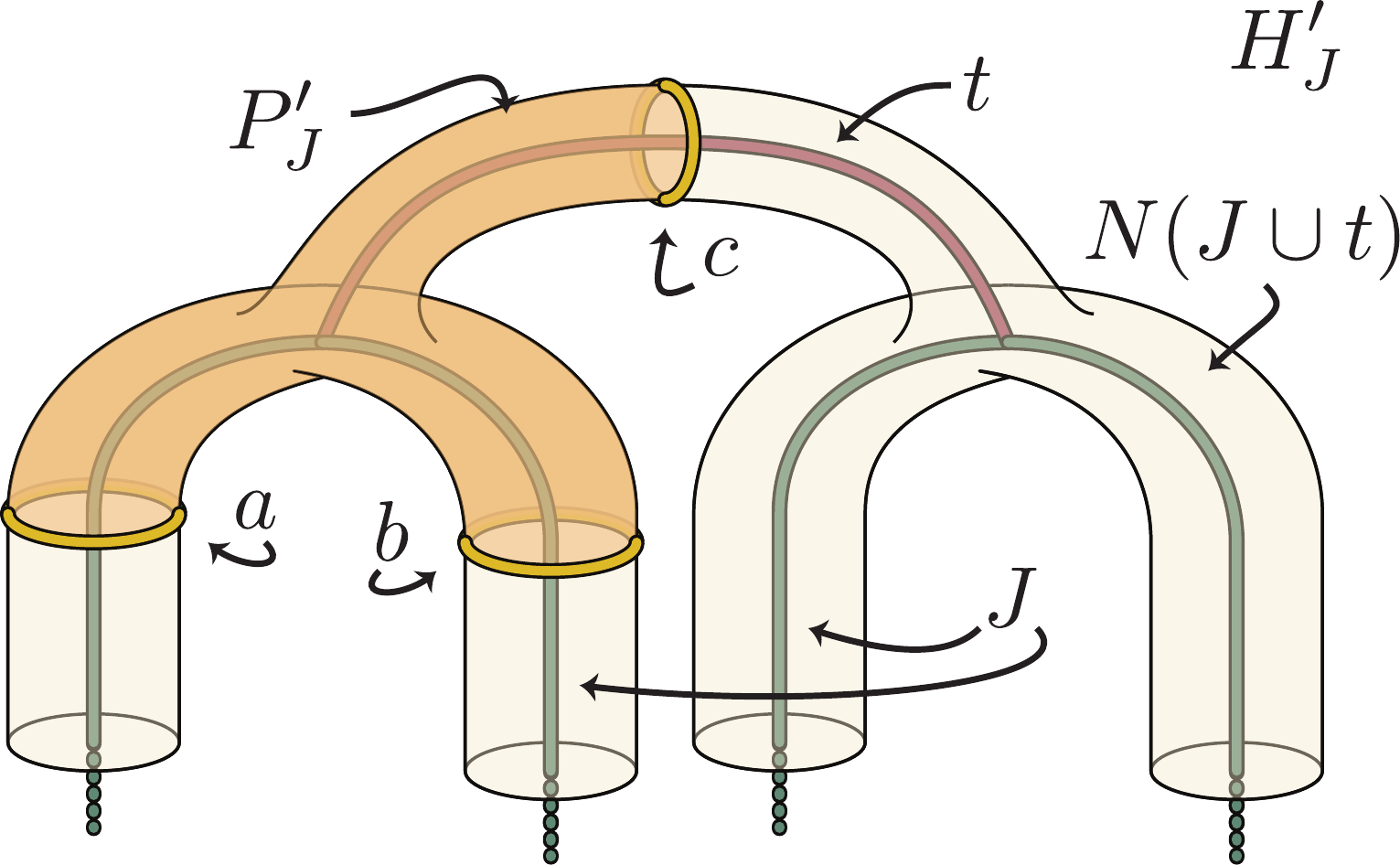}
  \end{center}
  \caption{Part of the handlebody $H_J' = S^3-\nbhd(J \cup t)$ together with $P_J'$}
  \label{fig:Hprime-P}
\end{figure}

\begin{theorem}  Let $H'$ be a genus two handlebody and the pair $(H',P')$ be 
of handlebody type. Then there is a $2$--bridge knot $J$ such that 
$(H',P')$ and $(H_J',P_J')$ are homeomorphic as pairs. 

In particular, let $(H_2,P(2,H,i))$ be as in section~\ref{sec:defKtau}. Then for each $i$ 
there is a $2$--bridge knot $J_i$ such that
$(H_2,P(2,H,i))$ is $(H_{J_i}', P_{J_i}')$. The handlebody/knot pair
$(M(2,H,i),K(\tau(p,q),2,H,i))$ is then the pair $(M,
K_{q,p}^{J_i})$ constructed in~\cite{Bowman13}. 
\end{theorem}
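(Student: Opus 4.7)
The plan is to invert the construction that sends a 2-bridge knot $J$ to the pair $(H_J', P_J')$. Given $(H', P')$ of handlebody type with $\boundary P' = a \cup b \cup c$, I attach 2-handles to $H'$ along $a$ and $b$. By joint primitivity of $\{a, b\}$, iterating reduces the handlebody genus from $2$ to $1$ to $0$, producing a $3$-ball $B$. Let $\alpha_a, \alpha_b \subset B$ denote the cocores of these 2-handles, so that $H' = B \setminus N(\alpha_a \cup \alpha_b)$. The pair $(B; \alpha_a \cup \alpha_b)$ is a trivial $2$-string tangle: the disjoint primitivizing disks $D_a, D_b$ in $H'$ extend through the corresponding 2-handles to disjoint disks $\bar D_a, \bar D_b$ in $B$, each meeting its corresponding cocore in a single point and disjoint from the other, witnessing simultaneous $\boundary$-parallelism of $\alpha_a$ and $\alpha_b$ along disjoint parallelism disks.

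Next, I embed $B$ as one hemisphere of $S^3$ and close up the tangle in the complementary ball $B^* = S^3 \setminus B$. The pair of pants $P'$ descends to a disk $\bar P \subset \partial B$ bounded by $c$, since the 2-handle attachments cap off $a$ and $b$. Exactly two of the four endpoints of $\alpha_a \cup \alpha_b$ lie in $\bar P$, and the other two lie in the complementary disk. I add trivial arcs $\beta_1 \subset B^*$ connecting the two endpoints in $\bar P$ and $\beta_2 \subset B^*$ connecting the remaining two. Tracing along the arcs shows $J := \alpha_a \cup \alpha_b \cup \beta_1 \cup \beta_2$ is a single knotted circle with a $2$-bridge presentation using $\partial B$ as the bridge sphere. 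I then choose an unknotting tunnel $t \subset B^*$ as an arc joining the maxima of $\beta_1$ and $\beta_2$.

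The main obstacle is to verify that $(H_J', P_J')$ is homeomorphic as a pair to $(H', P')$. The exterior $H_J' = S^3 \setminus N(J \cup t)$ decomposes as $H' \cup (B^* \setminus N(\beta_1 \cup \beta_2 \cup t))$, glued along the $4$-holed sphere $\partial B$ minus the four disks where $\alpha_a \cup \alpha_b$ meets $\partial B$. Both sides are genus $2$ handlebodies, consistent with the Euler characteristic computation $\chi(H_J') = \chi(H') + \chi(B^* \setminus N(\beta_1 \cup \beta_2 \cup t)) - \chi(\text{4-holed sphere}) = -1$. The hard step is identifying the complementary piece $B^* \setminus N(\beta_1 \cup \beta_2 \cup t)$ as a collar-type compression body whose attachment to $H'$ realizes a pair-preserving homeomorphism $H' \to H_J'$. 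Under the resulting identification, the curves $a$ and $b$ on $\partial H'$ (cores of the annuli $\partial N(\alpha_a)$ and $\partial N(\alpha_b)$ in $\partial H'$) correspond to meridians of $J$ on $\partial H_J'$, and $c$ corresponds to the meridian of $t$, so $P' \subset \partial H'$ is identified with $P_J' \subset \partial H_J'$.

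For the second part, applying the construction to $(H_2, P(2, H, i))$ yields a $2$-bridge knot $J_i$ for each $i$, with the dependence on $i$ reflecting the Dehn twist along $\gamma_2$ used to define $P(2, H, i)$ from $P(2, H, 0)$. Bowman's knot $K_{q, p}^{J_i}$ in \cite{Bowman13} is constructed by gluing the little handlebody $(H, P)$ containing $K(\tau(p, q))$ to $(H_{J_i}', P_{J_i}')$ along the pair of pants, exactly following Definition~\ref{def:bighandlebodyM}. Our pair $(M(2, H, i), K(\tau(p, q), 2, H, i))$ is obtained from the analogous gluing to $(H_2, P(2, H, i))$. Since $(H_{J_i}', P_{J_i}') \cong (H_2, P(2, H, i))$ by the first part, the two gluing constructions produce homeomorphic handlebody/knot pairs $(M, K_{q, p}^{J_i}) \cong (M(2, H, i), K(\tau(p, q), 2, H, i))$.
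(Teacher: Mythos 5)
Your overall strategy is the paper's: use joint primitivity of $a$ and $b$ to fill them with $2$--handles, obtaining a ball in which the cocores form a rational (trivial) tangle, close that tangle up with a trivial outside tangle to get a $2$--bridge knot $J$, and match the tunnel data. One local imprecision first: the primitivizing disks $D_a,D_b$ do not extend to disks meeting the cocores transversally in a single point. The correct extension of $D_a$ across its $2$--handle $D^2\times I$ is by the rectangle $(\text{radius})\times I$, which yields a disk containing the cocore $\alpha_a$ as an arc of its \emph{boundary} --- i.e.\ a cancelling (parallelism) disk. The resulting pair of disjoint cancelling disks is what exhibits the tangle as trivial; a disk meeting an arc transversally once does not by itself witness $\partial$--parallelism. (Also, the complementary piece $B^*\setminus N(\beta_1\cup\beta_2\cup t)$ is a genus \emph{three} handlebody --- it is $\Sigma_{0,4}\times I$ --- not genus two, which is in fact what your own Euler characteristic count requires.)

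The substantive problem is that the step you yourself label ``the hard step'' is asserted rather than carried out, and it is precisely the content of the theorem: you never show that $B^*\setminus N(\beta_1\cup\beta_2\cup t)$ is a product $\Sigma_{0,4}\times I$ attached to $H'$ as a collar along the $4$--punctured sphere in $\partial B$, nor that under the resulting identification $H'\cong H'_J$ the curve $c$ becomes the meridian of $t$ and $P'$ becomes the pair of pants $P_J'$ of Figure~\ref{fig:Hprime-P}. Without that, the claim ``$P'$ is identified with $P_J'$'' is unproved. The paper reaches the same identification by a more economical route: attaching a $2$--handle to $H'$ along $c$ reconstitutes the union of the rational tangle exterior with the outside trivial tangle exterior, i.e.\ $S^3_J$ itself, and the cocore of that $2$--handle is exactly the unknotting tunnel $t$ of Definition~\ref{associatedpair}; hence $H'=S^3_{J\cup t}=H_J'$ with $a,b$ the bridge-arc meridians and $c$ the meridian of $t$, which pins down $P'=P_J'$. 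If you keep your decomposition, you must actually establish the product structure on the outside piece (for instance, by observing that the exterior of the trivial tangle in $B^*$ deformation retracts to $\Sigma_{0,4}\cup t$, so deleting $N(t)$ leaves $\Sigma_{0,4}\times I$) and then track $c$ through the identification. The second half of your argument (matching with Bowman's $K^{J_i}_{q,p}$ via Definition~\ref{def:bighandlebodyM}) is fine once the first half is secured, modulo the index convention $(\mu,\lambda)\leftrightarrow(m,l)$ that accounts for the $(q,p)$ ordering.
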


\begin{proof}
First suppose that $(H', P')$ is of handlebody type. Let $\partial P' = a \cup b \cup c$ where $a,b$
are jointly primitive in $H'$.  Attach $2$--handles to $H'$ along
$a$ and $b$, and let $a'$ and $b'$ be the cocores of the
handles.  Because $a,b$ are jointly primitive, the
arcs $a'$ and $b'$ in the resulting ball form a rational tangle.  
Attaching a $2$--handle to $H'$ along $c$ gives the exterior of
$2$--bridge knot, $J$, corresponding to capping the tangle with two unknotted
arcs (bridge arcs) outside the tangle. The cocore of the attached $2$--handle is an 
unknotting tunnel for the exterior of $J$ connecting the two bridge arcs. That is,
$(H',P')$ is homeomorphic to $(H_J',P_J')$.
Having made this identification, the construction of the  pair
$(M(2,H,i),K(\tau(p,q),2,H,i))$ is the same as  for the pair $(M,
K_{q,p}^{J_i})$ in~\cite{Bowman13}. Here we are using the convention that $(\mu,\lambda)$ of $T$ in
section~\ref{section:little-handlebody} corresponds to $(m,l)$ of section 4 in
\cite{Bowman13}.
\end{proof}

The knots $K_{r,s}^L$ in~\cite{Bowman13} are knots in a genus $2$ handlebody 
that admit non-trivial handlebody surgeries. It is  shown there that for certain parameter
values, these knots have bridge number bigger than one in the handlebody.
Below, we find new parameter values for which the corresponding knots have
bridge number bigger than one. Indeed, the bridge numbers get arbitrarily large.

\begin{corollary}\label{seansknots}
    Let $K_{r,s}^{L}$ be the knots in the genus $2$ handlebody $M$ constructed in~\cite{Bowman13}. 
    Let $k$ be a nonzero integer, and fix a positive integer $N$. Let
    $N_{strong}$ be the constant for $g=2$ given by
    Proposition~\ref{prop:strongandhyp}.  For $|i|>N_{strong}$ there are only
    finitely many knots in the families $\set{K_{kn+1,n}^{J_i}}$ and
    $\set{K_{kn-1,n}^{J_i}}$ with bridge number less than $N$ in $M$.
\end{corollary}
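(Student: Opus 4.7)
The plan is to translate the families $\{K_{kn+1,n}^{J_i}\}$ and $\{K_{kn-1,n}^{J_i}\}$ into the notation of this paper and then invoke the bridge number bound of Proposition~\ref{prop:bridge-number-bound}. By the identification preceding the corollary, the knot $K_{r,s}^{J_i}$ corresponds to $K(\tau(s,r),2,H,i)$ in $M(2,H,i)\cong M$. Thus $K^{J_i}_{kn\pm 1,n}$ corresponds to $K(\tau(n, kn\pm 1),2,H,i)$.

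First I would express the homology classes of these curves in $T$ as Dehn twists of a fixed curve along another fixed curve. Take $\alpha = \tau(1,k)$ and $\kappa_{\pm} = \tau(0,\pm 1)=\lambda$. Using the formula $\tau(\kappa,\alpha,n) = \tau(r+n\Delta(\kappa,\alpha)t, s+n\Delta(\kappa,\alpha)v)$ where $\kappa=\tau(r,s)$ and $\alpha=\tau(t,v)$, one computes $\Delta(\kappa_{\pm},\alpha)=1$ and
\[ \tau(\kappa_\pm,\alpha,n) = \tau(n,kn\pm 1). \]
Since $k\neq 0$, the curve $\alpha=\tau(1,k)$ is parallel to neither $\mu=\tau(1,0)$ nor $\lambda=\tau(0,1)$, and $\kappa_\pm \neq \alpha$, so we are in the situation where Proposition~\ref{prop:bridge-number-bound} applies.

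Next I would apply Proposition~\ref{prop:bridge-number-bound} with $g=2$, $*=H$, and these choices of $\kappa$ and $\alpha$.  Because $g=2$, the proposition guarantees the existence of a constant $C=C(2,\alpha,\kappa_\pm)>0$, independent of $i$ as long as $|i|>N_{\text{strong}}$, such that
\[ b_2(K(\tau(\kappa_\pm,\alpha,n),2,H,i)) \;\geq\; \tfrac{1}{2}\bigl(|n|\,C - 4\bigr). \]
Hence, fixing any $|i|>N_{\text{strong}}$ and any $N>0$, as soon as $|n|\geq (2N+4)/C$ the knot $K_{kn\pm 1,n}^{J_i}=K(\tau(n,kn\pm 1),2,H,i)$ has genus-two bridge number at least $N$. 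Only finitely many integers $n$ fail this inequality, which yields the corollary.

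The main step is simply the identification of the parametrization; the bridge number lower bound then comes directly out of Proposition~\ref{prop:bridge-number-bound}. There is no serious obstacle, since the delicate work (establishing the catching surface with Euler characteristic independent of $i$ when $g=2$, and verifying hyperbolicity of the exterior for $|i|>N_{\text{strong}}$) is already packaged into the cited propositions.
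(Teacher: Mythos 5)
Your proposal is correct and follows the paper's argument: identify $K^{J_i}_{kn\pm1,n}$ with $K(\tau(n,kn\pm1),2,H,i)$, realize these curves as twists of a fixed $\kappa$ along $\alpha=\tau(1,k)$, and apply Proposition~\ref{prop:bridge-number-bound}, whose constant can be taken independent of $i$ because $g=2$. The only cosmetic difference is in the second family: writing both families as $\tau(\tau(0,\pm1),\alpha,n)$ hides a sign, since positive twisting of the unoriented curve $\lambda$ along $\alpha$ yields only one of the two families --- the other is $\tau(\lambda,\alpha,-n)$, or, as the paper writes it, $\tau(\tau(1,k-1),\alpha,n-1)$ --- but as the bound of Proposition~\ref{prop:bridge-number-bound} depends only on $|n|$, this does not affect the conclusion.
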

\begin{proof}
    The knot $K_{kn+1,n}^{J_i}$ is the knot $K(\tau(n,kn+1),2,H,i)=
    K(\tau(\kappa,\alpha,n),2,H,i)$ where $\kappa=\lambda$ and $\alpha=\tau(1,k)$.
    Similarly, the knot $K_{kn-1,n}^{J_i}$  is the knot
    $K(\tau(n,kn-1),2,H,i)=K(\tau(\kappa,\alpha,n-1),2,H,i)$ where
    $\kappa=\tau(1,k-1)$ and $\alpha=\tau(1,k)$.
    The result now follows from Proposition~\ref{prop:bridge-number-bound}.
\end{proof}

Definition~\ref{associatedpair} gives many examples of pairs $(H',P')$ which are
of strong handlebody or strong Seifert type, and, consequently lots of hyperbolic
knots in a genus $2$ handlebody with boundary-reducing surgeries (see Definition~\ref{def:bighandlebodyM}, and Propositions~\ref{k-has-handlebody-surgery}, ~\ref{prop:exterior-hyperbolic}).

\begin{theorem}
Let $J$ be a nontrivial $2$--bridge knot. The pair $(H_J',P_J')$ is of handlebody type.
If the exterior of $J$ is anannular 
then $(H_J',P_J')$ is of strong handlebody type.

Let $J$ be  a $2$--bridge link of two components. The pair $(H_J',P_J')$ is of Seifert type.
If the exterior of $J$ is irreducible and anannular, 
then $(H_J',P_J')$ is of strong Seifert type.
\end{theorem}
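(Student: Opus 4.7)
The plan is to treat the two parts together by setting up a graph picture for $J \cup t$ in $S^3$ and then using duality between $2$--handle attachment and edge deletion to verify primitivity. The ``strong'' conclusions will follow from Lemmas~\ref{lem:handleaddition} and~\ref{lem:old3diskbusting} together with the exterior hypotheses.

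First I would set up notation. For $J$ a nontrivial $2$--bridge knot, $J \cup t$ is a theta--graph with edges $t$, $J_L$, $J_R$, the two subarcs of $J$ between the endpoints of $t$. For a $2$--bridge link $J = J_0 \cup J_1$, $J \cup t$ is the ``barbell'' of two disjoint circles $J_0, J_1$ joined by $t$. In either case $\nbhd(J \cup t)$ is the genus $2$ handlebody complementary to $H_J'$ in $S^3$; its three edge--meridians give three curves on $\partial H_J'$, and by Definition~\ref{associatedpair}, $a, b$ are meridians of the two non--tunnel edges while $c$ is the meridian of $t$.

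Next I would use the duality that attaching a $2$--handle to $H_J'$ along the meridian of an edge $e$ amounts to deleting the interior of $e$ from $J \cup t$ before taking the $S^3$--complement. In the knot case, deleting both $J_L$ and $J_R$ leaves $t$, so $H_J'[a,b] = S^3 \setminus \nbhd(t)$ is a $3$--ball, while $H_J'[a]$ and $H_J'[b]$ are complements of $\nbhd(t \cup J_R)$ and $\nbhd(t \cup J_L)$ respectively; canonical $2$--bridge position makes each of these cycles an unknot (they split into a boundary--parallel arc in each bridge ball), so $H_J'[a], H_J'[b]$ are solid tori. Thus $a, b$ are jointly primitive, giving handlebody type. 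In the link case, deleting $J_0$ leaves $J_1 \cup t$ with one free endpoint of $t$, and $\nbhd$ of this retracts to the unknot $J_1$; hence $H_J'[a]$ is again a solid torus and $a, b$ are individually primitive. Since both $a, b$ are meridians of $J_0$ they cobound an annulus on $\partial H_J' \cap \partial \nbhd(J_0)$, and the figure shows that $P_J'$ lies on the opposite side of this parallelism. This gives Seifert type.

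For the strong statements, I need (i) $c$ disk--busting, (ii) $c$ annulus--busting, and (iii) for the link case with $g = 2$, $P_J'$ boundary--incompressible. For (i), an essential disk in $H_J'$ disjoint from $c$ would remain essential in $H_J'[c] = S^3_J$, contradicting $\partial$--irreducibility of a nontrivial knot exterior (knot case) or of a non--split link exterior (link case, where irreducibility of $S^3_J$ forces non--splittability). For (ii), Lemma~\ref{lem:handleaddition} applied to the genus $2$ handlebody $H_J'$ says that if $c$ is disk--busting but not annulus--busting, then $S^3_J = H_J'[c]$ contains an essential annulus, contradicting the anannularness hypothesis. For (iii), Lemma~\ref{lem:old3diskbusting} reduces the question to ruling out the exceptional case $H_J' \cong T' \times I$ with $c = \partial T' \times \{pt\}$; but in that case $H_J'[c] \cong T^2 \times I$ is the Hopf link exterior, which is \emph{not} anannular, again contradicting the hypothesis. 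The main technical hurdle is the handle--attachment / edge--deletion duality in paragraph two, particularly the claim that the cycles $t \cup J_L$, $t \cup J_R$ in the knot case are unknotted in $S^3$; this rests on canonical $2$--bridge position and the boundary--parallelism of each lower arc of $J$ in the bridge ball. Once in place, the remaining conclusions follow from the cited lemmas and the hypotheses on $S^3_J$.
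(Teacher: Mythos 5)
Your verification that $(H_J',P_J')$ is of handlebody (resp.\ Seifert) type is essentially the paper's own argument: the paper likewise establishes joint primitivity of $a,b$ by noting that attaching a $2$--handle along $a$ or $b$ produces a knot with a single maximum and minimum, hence an unknot exterior, and that the remaining components of $\bdry P_J'$ become meridians of that solid torus; your edge--deletion reformulation of $2$--handle attachment along edge meridians is the same computation. Where you genuinely diverge is in the ``strong'' statements. The paper proves these by one direct case analysis on an essential disk or annulus $A\subset H_J'$ disjoint from $c$ (separating versus non-separating boundary, lens space summands, non-separating spheres), and then invokes Lemma~\ref{P-essential-in-Hp} for the $\bdry$--incompressibility of $P_J'$ in the link case. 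You instead quote Lemma~\ref{lem:handleaddition} to get annulus-busting from disk-busting plus anannularity of $S^3_J$, and Lemma~\ref{lem:old3diskbusting} to get $\bdry$--incompressibility of $P_J'$, disposing of the exceptional case $H_J'\cong T'\times I$ with $c=\bdry T'\times\{pt\}$ by observing that then $S^3_J\cong T^2\times I$, which contains spanning incompressible annuli that are not parallel into the boundary (their two boundary circles lie on different boundary tori), contradicting anannularity. Both reductions are legitimate, arguably cleaner than the paper's direct arguments, and make transparent why the Hopf link is excluded.

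The one step you state too glibly is disk-busting. An essential disk $A\subset H_J'$ disjoint from $c$ need \emph{not} remain essential in $H_J'[c]=S^3_J$: its boundary may become inessential in $\bdry S^3_J$ after the $2$--handle is attached (for instance when $\bdry A$ is isotopic to $c$, or when $\bdry A$ separates $\bdry H_J'$ into two once-punctured tori and the one missing $c$ persists in the boundary torus of $S^3_J$, forcing $\bdry A$ to bound a disk there). Handling this is precisely the content of the corresponding portion of the paper's proof: if $\bdry A$ is non-separating one caps off to obtain a non-separating sphere, contradicting irreducibility of $S^3_J$; if $\bdry A$ is separating, the two complementary solid tori force $S^3_J$ to be a solid torus, contradicting $\bdry$--irreducibility. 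Your appeal to $\bdry$--irreducibility alone only covers the case in which $\bdry A$ stays essential, so this case analysis needs to be written out (it uses irreducibility of $S^3_J$, which is automatic for a nontrivial knot and is hypothesized for the link). With that step repaired, the proof is complete.
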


\begin{proof}
Let $J$ be a non-trivial $2$--bridge knot and $(H_J',P_J')$ be as in Definition~\ref{associatedpair}.
Attaching a $2$--handle to $H_J'$ along $a$ or $b$, we obtain a new knot exterior
for which the associated knot has exactly one maximum and one minimum with
respect to $h$.  This must be the unknot, and therefore $a$ and $b$ are
primitive curves in $\boundary H_J'$.  The remaining
components of $\boundary P_J'$ become meridians of this unknot exterior, so $a$
and $b$ are in fact jointly primitive.  Thus $(H_J',P_J')$ is of handlebody type.

When $J$ is a $2$--bridge link, $a$ and $b$ are parallel and $P'$ is
obtained by banding annular neighborhoods of $a$ and $b$ in the complement of
the annulus cobounded by $a$ and $b$. As each component of a $2$--bridge link
is unknotted, $a,b$ are each primitive. Thus $(H_J',P_J')$ is of Seifert type.

Assume that $J$ is a $2$--bridge knot or link whose exterior, $S^3_J$, is irreducible, 
anannular, and has incompressible boundary. 
That exterior is gotten by attaching a $2$--handle along $c$ to $H_J'$.
Assume there is a properly embedded essential disk or annulus
$A$ in $H_J'$ that misses $c$. By the assumption on $S^3_J$, it must be that all components
of $\partial A$ become inessential in $\partial S^3_J$ after adding a $2$--handle along $c$. 
First assume that $A$ is a disk. If $\partial A$ is non-separating in $\partial H_J'$, then it is isotopic
to $c$. But then capping off $\partial A$ with the added $2$--handle gives an essential sphere in 
$S^3_J$, which is irreducible. So $\partial A$ is separating in $\partial H_J'$. Each side of $A$ in
$H_J'$ is a solid torus. In particular, this implies that $S^3_J$ is a solid torus, contradicting
the incompressibility of its boundary. Thus it must be that $A$ is an annulus. If both components are parallel
in $\partial H_J'$, then attaching a $2$--handle along $c$ must give a lens space summand ($A$ is
essential in $H_J'$). But this
is not possible, as such a submanifold lives in the $S^3_J$. Thus it must be that one component
of $\partial A$ is parallel to $c$ and the other bounds a once-punctured torus in $\partial H_J'$ containing
$c$. But then we  can cap off these boundary components in $S^3_J$ to get a non-separating $2$--sphere,
a contradiction. Thus there is no such essential disk or annulus missing $c$. We conclude that when
$J$ is a knot, $(H_J', P_J')$ is of strong handlebody type. When $J$ is a two component link, 
we have that $(H_J',P_J')$ is of strong Seifert type by Lemma~\ref{P-essential-in-Hp} below.
\end{proof}

\begin{lemma}\label{P-essential-in-Hp}
Assume $J$ is a $2$--bridge knot or link which is not the unknot, the Hopf link,
or a split link.  
The surface $P_J'$ is $\boundary$--incompressible in $H_J'$.
\end{lemma}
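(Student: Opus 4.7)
The plan is to apply Lemma~\ref{lem:old3diskbusting}, which guarantees $\bdry$--incompressibility of $P'$ in $H'$ whenever $c$ is disk-busting in a handlebody/pair-of-pants pair $(H',P')$ of handlebody or Seifert type, with one exceptional configuration: Seifert type with $H' \cong T' \times I$ and $c = \bdry T' \times \{pt\}$ for $T'$ a once-punctured torus. So the task splits into (i) showing $c$ is disk-busting in $H_J'$, and (ii) ruling out this exceptional configuration in the link case.

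For (i), I argue by contradiction. Suppose $D \subset H_J'$ were an essential disk disjoint from $c$. After attaching the $2$--handle along $c$, $D$ persists as a properly embedded disk in $S^3_J$ with $\bdry D$ sitting in $\bdry H_J' \setminus A_c \subset \bdry S^3_J = \bdry N(J)$, where $A_c$ is the annular attaching region. A short case analysis of how $\bdry D$ lies in the new boundary yields two possibilities: either $\bdry D$ is essential in $\bdry N(J)$, making $D$ a compressing disk and $\bdry N(J)$ compressible in $S^3_J$; or $\bdry D$ is isotopic to $c$ in $\bdry H_J'$, forcing $c$ to bound a disk in $H_J'$ and hence $S^3_J$ to be a solid torus. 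The latter makes $J$ the unknot (knot case) or demands a link exterior with a single torus boundary (impossible). The former, by standard compressibility arguments, again forces $J$ to be the unknot (knot case) or a split link (link case, since a compression of $\partial N(J_i)$ produces a separating sphere). All outcomes are excluded by hypothesis, so $c$ is disk-busting.

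With $c$ disk-busting, Lemma~\ref{lem:old3diskbusting} immediately yields $\bdry$--incompressibility of $P_J'$ in the knot case, where $(H_J', P_J')$ is of handlebody type and the exceptional configuration cannot arise. In the link case $(H_J', P_J')$ is of Seifert type, and I must rule out the configuration $H_J' \cong T' \times I$ with $c = \bdry T' \times \{pt\}$. In that scenario, attaching the $2$--handle along $c$ to $T' \times I$ fills in the puncture of the $T'$ factor and yields $T \times I$, so $S^3_J \cong T^2 \times I$. Since the only $2$--component link in $S^3$ whose exterior is $T^2 \times I$ is the Hopf link (the two boundary tori each bound solid tori in $S^3$, recovering the standard genus-$1$ Heegaard splitting), $J$ would have to be the Hopf link, contradicting the hypothesis.

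The main obstacle will be carrying out the case analysis in (i) cleanly: specifically, showing that $\bdry D$ remains essential in $\bdry S^3_J$ except in the degenerate subcase where $c$ is itself a meridian of $H_J'$. This requires tracking how a curve in the genus-$2$ surface $\bdry H_J'$ disjoint from $c$ behaves after capping the annular neighborhood of $c$ with the two new $2$--handle disks, and using the Euler-characteristic count for $\bdry H_J' \setminus A_c$ to exclude the possibility that $\bdry D$ cobounds a pair of pants with two copies of $c$.
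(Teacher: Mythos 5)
Your strategy is genuinely different from the paper's. The paper argues directly on a $\partial$--compressing disk $D$ for $P_J'$, splitting into cases according to which components of $\partial P_J'=a\cup b\cup c$ the arc $D\cap P_J'$ meets; in the crucial case where the arc meets $c$, it extends $D$ across $N(t)$ to an essential annulus in $S^3_J$ with a meridional boundary component, which is then killed by primeness of $J$ (knot case) or the Hopf--link characterization (link case). You instead reduce everything to Lemma~\ref{lem:old3diskbusting} by showing $c$ is disk-busting and then disposing of that lemma's one exceptional configuration. This route is viable: the (non-strong) handlebody/Seifert type hypotheses are supplied by the theorem preceding this lemma without circularity, and your identification of the exceptional configuration with $S^3_J\cong T^2\times I$, hence with the Hopf link, is correct. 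A pleasant feature of your route is that it never invokes primeness of $2$--bridge knots.

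The gap is in step (i), and it sits exactly where you flag it --- but the fix you propose will not work. If $D\subset H_J'$ is an essential disk disjoint from $c$ and $\partial D$ becomes inessential in $\partial S^3_J$, the disk it bounds there contains zero, one, or two of the capping disks. Zero contradicts essentiality of $D$. One forces $\partial D$ to be isotopic to $c$, so $c$ bounds a disk in $H_J'$; note this makes $S^3_J$ reducible or produces a non-separating sphere (it does \emph{not} make $S^3_J$ a solid torus, as you wrote), but either way it contradicts irreducibility of a knot exterior in $S^3$ or non-splitness of the link. The two-disk case, where $\partial D$ cobounds a pair of pants with the two copies of $c$, cannot be ``excluded by an Euler-characteristic count'': such a curve certainly exists on $\partial H_J'$ (it bounds a once-punctured torus containing $c$), and whether it bounds an essential disk is a three-dimensional question, not a combinatorial one. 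You must instead observe that such a $D$ is separating and cuts $H_J'$ into solid tori $V_1,V_2$ with $c$ essential in $\partial V_2$; then $S^3_J=V_1\cup_D V_2[c]$ with $\partial V_2[c]=S^2$, so either $V_2[c]$ is a ball and $S^3_J$ is a solid torus (forcing the unknot; for a link this subcase never arises, since the two ends of the tunnel lie on different boundary tori), or $V_2[c]$ contains an essential sphere and $S^3_J$ is reducible. With that argument supplied, your proof closes up.
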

\begin{proof}
  Suppose that $D$ is a $\boundary$--compressing disk for $P_J'$ in $H_J'$.  If
  $\boundary D$ is non-separating in $P_J'$, then $\boundary D\cap c$ is empty
  since $c$ is not primitive ($J$ is not the unknot).  So $D$ becomes an
  essential disk in the link exterior $S^3_J$, which is impossible by
  hypothesis.

  If $\boundary D$ separates $H_J'$ (so that it also separates $P_J'$), we obtain
  two solid tori after performing the $\boundary$--compression, and each
  contains an annulus coming from $P_J'$ in its boundary.  If $\boundary D$ meets $a$,
  say, then examine the solid torus with the annulus containing $b$
  on its boundary.  Since a primitive curve in a handlebody of genus $g>1$
  remains primitive after cutting along a disk disjoint from the curve, the
  annulus containing $b$ must be longitudinal in the boundary of its
  solid torus.  However, this implies that there is a non-separating
  $\boundary$--compressing disk for $P_J'$ and we apply the argument above.  
  A similar argument works if
  $\boundary D$ meets $b$ or $c$.

  Suppose then that $\boundary D$ separates $P_J'$ but not $\boundary H_J'$.  If
  $\boundary D$ meets $a$ or $b$, we obtain a disk in a link exterior after
  attaching a $2$--handle along $c$.  This disk must be trivial in the exterior
  of $J$ by hypothesis.  Furthermore, since $\boundary D$ does not separate
  $\boundary H_J'$, it must be parallel to $c$.  This gives a non-separating
  sphere in the exterior $S^3_J$, which is impossible. On the other hand, 
  if $\boundary D$ meets $c$, then $D$ extends across $N(t)$ ($t$ the tunnel of Definition~\ref{associatedpair}) to
  an annulus $A$ properly embedded in $S^3_J$ one of whose boundary components
  is a meridian of a component of $J$.  Therefore $A$ is incompressible in
  $S^3_J$.  We may take $t$ to be the cocore of $A$.  Since $t$ is not
  boundary-parallel in $S^3_J$, $A$ is also not boundary-parallel.
  When $J$ is a knot, both boundary components of $A$ are meridians, implying
  that $J$ is not prime.  This is impossible since $J$ is $2$--bridge.  On the
  other hand, when $J$ is a two component link, $A$ shows that $J$ must be the
  Hopf link, contradicting our hypotheses.
\end{proof}


\bibliographystyle{amsalpha}
\bibliography{bridge}%

\end{document}